\documentclass[preprint,12pt]{elsarticle}
\usepackage{amsmath,amssymb}
\usepackage{algorithm}
\usepackage{algorithmic}
\usepackage{lineno,hyperref}
\usepackage{graphicx}
\usepackage{multirow}
\usepackage{soul}
\usepackage{epstopdf}
\usepackage{dsfont}
\usepackage{bm}
\usepackage{color}

\biboptions{numbers,sort&compress}

\makeatletter
\newtheorem{theorem}{Theorem}[section]
\newtheorem{lemma}[theorem]{Lemma}

\newtheorem{corollary}{Corollary}[section]

\newtheorem{remark}{Remark}[section]

\newenvironment{proof}[1][Proof]{\noindent\textbf{#1.} }{\hfill $\Box$}
\numberwithin{equation}{section}

\allowdisplaybreaks[4]
\begin{document}

\begin{frontmatter}

\title{Spatial Decay and Asymptotic Stability  of Forced Waves at the Critical Speed for the Fisher-KPP Equation in a Shifting Environment}
\author[DANWEI1]{Zhibao Tang}\ead{tangzhibao1022@163.com}
\author[DANWEI2]{Lina Wang}\ead{wanglina@btbu.edu.cn}
\author[DANWEI1]{Shi-Liang Wu \corref{cor}}\ead{slwu@xidian.edu.cn}
\cortext[cor]{Corresponding author.}
\address[DANWEI1]{School of Mathematics and Statistics, Xidian University, Xi'an, Shaanxi 710071, P.R. China}
\address[DANWEI2]{School of Mathematics and Statistics, Beijing Technology and Business University
Beijing 100048, China}

\begin{abstract}
In this paper, we investigate the spatial decay and asymptotic stability of forced waves for the Fisher-KPP equation in a shifting heterogeneous environment, focusing on the critical speed $c_*=2\sqrt{r_+}$. Within the framework of regular variation, we establish a complete classification of critical wave tails in terms of the asymptotic behavior of the environmental deviation $r_+-r(z)$ as $z\to+\infty$. In each case, we derive precise spatial decay that characterize how the environmental tail modifies the leading-order decay of the wave at criticality. We further establish asymptotic stability estimates of forced waves. For initial data satisfying an exponentially weighted boundedness condition, we prove an $O(t^{-1/2})$ convergence rate in the uniform norm. Under stronger spatial decay assumptions on the weighted initial data, this rate improves to $O(t^{-1-\sigma/2})$, for $\sigma\in[0,1]$, in both the uniform norm and a weighted norm. Moreover, no smallness assumption on the initial perturbations is needed for our results.

\medskip
%{\it 2000 Mathematics Subject Classification}:
\end{abstract}
\begin{keyword}
Shifting environment; Fisher-KPP equation; traveling waves; spatial decay; asymptotic stability.

MSC:~35K57; 35C07; 35K15; 35B35; 35B40.
\end{keyword}

\end{frontmatter}

%% \linenumbers

\section{Introduction}
\label{}
\noindent
 Reaction-diffusion equations of the Fisher-KPP type are some of the main ways to investigate the
 spatial spread of biological populations, advantageous genes and invasive species.  Since the pioneering works of Fisher
\cite{Fisher1937} and Kolmogorov, Petrovskii and Piskunov
\cite{KPP1937}, a key question has been how local growth and spatial dispersal affect each other.
 The classical reaction-diffusion equation in a homogeneous environment
\begin{equation}\label{eq:homogeneous-kpp}
  u_t=u_{xx}+u(\alpha-u), \qquad x\in \mathbb{R},
\end{equation}
with $\alpha>0$, admits monotone traveling fronts
$u(t,x)=\phi(x-ct)$ connecting $\phi(-\infty)=\alpha>0$ to the
unstable state $\phi(+\infty)=0$ iff $c\geq 2\sqrt{\alpha}$.  The number
$2\sqrt{\alpha}$ is the classical KPP minimal wave speed and is determined by the
linearization at $u=0$.  The existence, qualitative properties,
and long-time behavior of these fronts have been widely studied, with early examples including
 \cite{AW1978,FifeMcLeod1977,Uchiyama1978}.
The leading-order asymptotics already reveal an important difference between
noncritical and critical fronts.  If $c>2\sqrt{\alpha}$, the classical front
has a purely exponential leading order term, whereas the critical front with
$c=2\sqrt{\alpha}$ contains an additional algebraic factor, of the form
$z{\rm e}^{-\sqrt{\alpha}z}$ up to multiplication by a positive constant.  Thus,
even in the homogeneous problem, the coincidence of the two characteristic
roots at the minimal speed changes the precise spatial decay.

Many natural environments often shift over time.  Climate
change, shifting boundaries, and the good resources moving from one place to another,
then the equations use a moving frame $x-ct$ to describe how growth changes.
In this case, $c$ is determined by the environment, rather than chosen by the population, and it is natural to ask whether the
population can persist, spread, or follow the moving habitat.
Some early and important work on shifting boundaries and climate-driven population dynamics was
studied by \cite{PL2004,Be2009}.  A
general theory for heterogeneous reaction-diffusion equations with a forced speed was extensively
investigated by generalized principal eigenvalues, stationary
solutions in the moving frame, and comparison methods
\cite{BR2008,Be2009}.  Persistence, extinction, and
spreading in various shifting-habitat have been studied, for example,
\cite{LBSF2014,GYZZ2026,BG2019,HSL2020,LamYu2022,LNY2025,Vo2015};
see \cite{WangEtAl2022} for a recent survey and further references.

Another particularly important phenomenon in a shifting environment is a
\emph{forced wave}: a population front whose speed agrees exactly with the
speed of the environmental shift.  Such waves are stationary in the shifting
environment and describe how the population shape follows the shifting environment.
Unlike ordinary traveling waves in a homogeneous equation, forced waves do not arise from
 translation invariance, and their existence, multiplicity,
and spatial decay may depend on the spatial heterogeneity.  For a
general class of asymptotically KPP nonlinearities,
Berestycki and Fang \cite{BF2018} provided a nearly complete theory on the existence, multiplicity, and attractivity of these waves
except for some degenerate cases.
They further showed that in heterogeneous environment, the leading order term determines the type of wave behavior.
Related questions
of uniqueness and stability were proved in
\cite{GGS2024,CTW2017,YiZhao2023}, and time-periodic shifting environments
were studied in \cite{FPZ2021,HWZ2025}.

In the present paper, we consider the spatial decay and asymptotic stability of forced waves for the following Fisher-KPP equation
\begin{equation}\label{initialvpbm}
 u_t=u_{xx}+u(r(x-ct)-u),~~x\in \mathbb{R},~t>0,
\end{equation}
where $c>0$ is the forcing speed and $r(z)(z=x-ct)$ is a spatially heterogeneous growth rate.
 We assume that
$r(\cdot)\in C(\mathbb{R})$, which satisfies
\begin{enumerate}
\item[(H1)]  $0<r_-\leq r(z)<r_+
  \,\,\,\text{for }z\in\mathbb{R}$~\text{with}~$r_-:=\lim\limits_{z\to-\infty}r(z)$~\text{and}~
  $r_+:=\lim\limits_{z\to+\infty}r(z).$
\end{enumerate}
Thus, the environment is favorable for the population at every location,
and it transitions from one positive growth rate on the left
to the higher one on the right.
As shown in \cite{BF2018}, there exist infinitely many ordered forced waves of \eqref{initialvpbm} of the form $u(t,x)=\phi(z)$ with $z=x-ct$, where its profile satisfies
\begin{equation}\label{1.3}
\phi''(z)+c\phi'(z)+\phi(z)\big{(}r(z)-\phi(z)\big{)}=0,\quad\xi\in \mathbb{R},
\end{equation}
subject to
\begin{equation}\label{in0}
\phi(-\infty)=r_->0~~{\rm and}~~\phi(+\infty)=0.
\end{equation}
%Looking for an exponential term ${\rm e}^{-\lambda z}$ gives the characteristic equation
%$
%  \lambda^2-c\lambda+r_+=0.
%$
%It follows that the critical speed and the two noncritical exponents are
%\begin{equation*}
%  c_*:=2\sqrt{r_+},
%  \qquad
%  \lambda_\pm(c):=\frac{c\pm\sqrt{c^2-4r_+}}{2}
%  \quad (c>c_*).
%\end{equation*}
%We have two cases to consider. In the noncritical case $c>c_*$, the two roots are distinct, with
%$0<\lambda_-(c)<\lambda_+(c)$, and the critical case $c=c_*$, in which the two roots coincide at
%$
%  \lambda_-(c)=\lambda_+(c)=\sqrt{r_+}.
%$
%In the homogeneous Fisher--KPP equation, the nonlinear front with
%\(c>c_*\) selects the slower decay rate $\lambda_-(c)$, while the critical front
%has the sharper asymptotic form $z{\rm e}^{-\lambda_*z}$.
In an asymptotically
homogeneous shifting environment, the selection of spatial decay is more complex,
which depends on both the wave speed $c$ and the sizes of $r_-$ and $r_+$.
Berestycki and Fang \cite{BF2018} showed that, under $r_->r_+$,
only the minimal wave is selected with the fast exponential decay rate,
while all the other waves tend to zero with the slow exponential rate
in the noncritical case $c>c_*:=2\sqrt{r_+}$; however,
all waves decay only at the slow exponential rate when $r_-<r_+$.
They also investigated
the critical speed $c=c_*$ and recovered the classical critical tail under some assumption  on the spatial heterogeneity.  More precisely, for the
general equation $u_t=u_{xx}+f(x-ct,u)$, under an exponentially fast convergence
condition satisfying
\begin{equation}\label{eq:BF}
  \sup_{0<s\leq s_0}
  \left|\frac{f(z,s)-f_\infty(s)}{s}\right|
  =o\bigl({\rm e}^{-\eta z}\bigr),
  \quad\text{as }z\to+\infty,
  \qquad f_\infty(s):=\lim_{z\to+\infty}f(z,s),
\end{equation}
for some \(s_0,\eta>0\), they obtained for  $c=c_*$
\begin{equation}\label{BF-critical-tail}
  \phi(z)=A z{\rm e}^{-\frac{c}{2}z}+o\bigl(z{\rm e}^{-\frac{c}{2}z}\bigr),
  \quad\text{as }z\to+\infty
\end{equation}
with $A>0$.  For the logistic case, condition \eqref{eq:BF} says that
$r(z)$ converges to $r_+$ exponentially,
Berestycki and Fang \cite{BF2018} proved that the forced waves decay like \eqref{BF-critical-tail}.

Recently, this exponential decay condition \eqref{eq:BF} was relaxed by Tang et.al \cite{TangWuXu}.
In that work, the authors studied the existence, stability, and precise spatial decay of
forced waves for \eqref{initialvpbm} at
both noncritical and critical forcing speeds.  For the model \eqref{1.3},
at the critical speed it is enough to assume
\begin{equation}\label{first-order}
  \int_{z_0}^{+\infty} z\,|r(z)-r_+|\,{\rm d} z<+\infty,
\end{equation}
which replaces \eqref{eq:BF}.
This also has the classical asymptotic behavior satisfying \eqref{BF-critical-tail}.
As a condition on the tail of the environment,
\eqref{first-order} is weaker than exponential
convergence; for instance, it allows algebraic decay such as
$|r(z)-r_+|=O(z^{-p})$ with $p>2$.  This result only works for fast-decaying rate.
It does not cover other cases where the decay is too slow or critical.
The present paper deals with the cases left out above, and gives a complete classification of the resulting
critical tails.  In particular, $z{\rm e}^{-cz/2}$ appears as the integrable case.

A similar but different problem appears when the growth rate at infinity is degenerate, that is, when $r_+=0$.
For \eqref{1.3} with $r_+=0$ in such a degenerate shifting environment,
Tang et.al \cite{TangWuWu2026,TWW2026JDE} established existence and nonexistence, multiplicity, and
the precise spatial decay of all forced waves as well as the stability, including both exponentially
and non-exponentially decaying cases.  In \cite{TangWuWu2026}, the slow decay behavior is
caused by a characteristic root tending to zero.  Instead, the present
paper assumes $r_+>0$, and the new tails are generated by the coupling of a
double nonzero root at $c_*=2\sqrt{r_+}$.  Its asymptotic mechanism is different from the critical
non-degenerate classification developed here.

Furthermore, algebraic stability at the critical speed is already an established
feature of the homogeneous Fisher--KPP problem. In particular, Faye and
Holzer~\cite{FH2019} give a pointwise Green-function proof of
weighted $t^{-3/2}$ stability for sufficiently small localized
perturbations of the homogeneous critical front.
To the best of my knowledge, there are few results on the asymptotic Fisher-KPP equation with a moving environment.

The purpose of this paper is to give a complete asymptotic
classification of the tails of all forced waves near $z=+\infty$ satisfying
\eqref{1.3}-\eqref{in0} at the critical speed $c=c_*$.
We prove how the exact decay is affected by the asymptotic behavior of
$q(z)=r_+-r(z)$ as $z\to+\infty$, covering the integrable case \eqref{first-order}, critical, and non-integrable cases
considered below.  In particular, the classical profile
$A z{\rm e}^{-cz/2}$ is recovered under the condition
\eqref{first-order}, whereas different algebraic or
sub-exponential corrections arise when that condition fails.
We are more interested in the asymptotic stability of all forced waves .  In particular,
we do not need smallness of initial datum, but we do retain the global lower bound by
a positive multiple of the forced wave.

%Therefore, the role of regular variation in our analysis is also important. First, it provides a general description of the convergence of $r(z)$ toward $r_+$, substantially extending the pure power-law regimes. Second, it produces a natural classification of the tail behavior through the index $p$, with $p=2$ regarding as the critical threshold. Third, it provides the asymptotic information, which is needed to construct and estimate the Green kernel associated with the linearized problem.

\subsection{Motivation and main results}

Our first main question is that how the decay rate and the accumulated effect of $r_+-r(z)$ influence the decay of the forced-wave $\phi(z)$ as $z\to+\infty$.
Throughout the paper, we consider the critical speed $c=2\sqrt{r_+}$ and denote
\begin{equation}\label{r-TRANS}
q(z):=r_+-r(z)>0,
\qquad
q(z)\to0,
\quad\text{as }z\to+\infty.
\end{equation}

For a non-critical speed
$
c>2\sqrt{r_+},
$
the limiting linearized equation near $u=0$ and $z=+\infty$ has characteristic equation
$
\mu^2+c\mu+r_+=0
$
with two distinct negative roots
$
\mu_-<\mu_+<0.
$
The corresponding exponential decays are separated by a spectral gap.
%This exponential separation makes the asymptotic behavior relatively stable under a sufficiently small spatial perturbation $q(z)$: the leading-order decay is associated with one of the two limiting exponential decay, while the heterogeneity can be determined by using standard perturbation theory.
At the critical speed
$
c=2\sqrt{r_+},
$
the behavior is completely different.
In this case, the two characteristic roots reduce to a single root
$
-\lambda,
$ with
$
\lambda:=\sqrt{r_+}=\frac c2.
$
Hence the spectral gap disappears. The limiting homogeneous problem has the two critical decay
$
{\rm e}^{-\lambda z}
~{\rm and}~
z{\rm e}^{-\lambda z}.
$
However, for spatially heterogeneous problems, if $q(z)$ is small and tends to zero, which can effect the leading-order term.
The critical problem becomes more complex in the tail of the heterogeneous coefficient due to the loss of exponential separation.
Then we set
\begin{equation}\label{defin1}
y(z):={\rm e}^{\lambda z}\phi(z)~~{\rm and}~~\lambda=\frac{c}{2}.
\end{equation}
Then \eqref{1.3} is rewritten as
\begin{equation}\label{trans-EQ}
y''-q(z)y={\rm e}^{-\lambda z}y^2,
\end{equation}
which implies that the coefficient $q(z)$ determines the linear asymptotic structure through
\begin{equation}\label{linear-EQ}
v''-q(z)v=0,
\end{equation}
while the right-hand side
$
{\rm e}^{-cz/2}y^2
$
contains an additional exponentially decaying term, which is a nonlinear perturbation. Thus, the first step in the critical analysis is to find the fundamental solutions of \eqref{linear-EQ}.
The natural difference between integrable and non-integrable heterogeneity already appears at the linear terms.

For the equation $v''=0$,
the fundamental solutions are $1$ and $z$, which implies that the perturbations of this equation is that
$\int_{z_0}^{+\infty} zq(z)\,{\rm d}z$ for $z_0\gg1$.
Therefore, we will further consider its convergence or divergence.
We further assume that the tail of $q(z)$ is regularly varying:
\begin{equation}\label{qeqlv}
q(z)=z^{-p}L(z),
\end{equation}
where $q(z)$ satisfies \eqref{r-TRANS} and $L(z)>0$ is slowly varying at $+\infty$, namely
\begin{equation}\label{SVFLP}
\lim_{z\to+\infty}\frac{L(tz)}{L(z)}=1,\quad\text{for every }t>0.
\end{equation}
We recall the standard facts on slowly and regularly varying functions from Bingham, Goldie and Teugels \cite{BGT1987}.
In particular, $L(z)$ admits the Karamata representation (see \cite[Theorem~1.3.1]{BGT1987})
\begin{equation}\label{SVFL}
L(z)=
a(z)
\exp\left(
\int_{z_0}^{z}\frac{\varepsilon(t)}{t},{\rm d}t
\right),
\quad z\ge z_0,
\end{equation}
where
$
a(z)\to a_\infty\in(0,+\infty)$ and
$\varepsilon(z)\to0$ as $z\to+\infty$.
On the other hands, every function of thus form is slowly varying. Potter's bound \cite[Theorem~1.5.6]{BGT1987} further implies that, for every $\delta>0$,
$$
L(z)=o(z^\delta),
\quad\text{as }z\to+\infty.
$$
Further, regular variation can describe more general types of behavior. It includes, for instance,
$$
q(z)=z^{-p},
\quad
q(z)=z^{-p}(\ln z)^\alpha,
\quad
q(z)=z^{-p}(\ln z)^\alpha(\ln\ln z)^\beta,
$$
as well as many other slowly varying modifications of algebraic decay.
The importance of the exponent $p$ comes directly from the first-order criterion. Thus, we shall consider the asymptotic mechanism in three cases
$
p>2,
~
p=2
$ and
$0<p<2.$

\medskip
\noindent
 \textbf{ For the integrable case ($p>2$)}.
%$$
%q(z)=z^{-p}L(z),
%\qquad
%\int_{z_0}^{+\infty} z q(z)\,{\rm d}z<+\infty,
%$$
The homogeneous equation behaves asymptotically like
$
v''=0.
$
We obtains fundamental solutions of \eqref{linear-EQ} tend to
$
1
~\text{and}~
z.
$
Hence the tail structure is essentially retained.

\medskip
\noindent
\textbf{For the non-integrable case} ($0<p<2$).
The asymptotic behavior of the associated linear equation \eqref{linear-EQ}
is determined by
\begin{equation}\label{phase}
S(z):=\int_{z_0}^z \sqrt{q(s)}\,{\rm d}s.
\end{equation}
Lemma \ref{lem:LG-RV} in Section 2 shows that, under the weaker regularity assumption
$q(\cdot)\in C^1([z_0,+\infty))$, equation \eqref{linear-EQ} still admits a fundamental system satisfying
$
v_\pm(z)\sim q(z)^{-1/4} {\rm e}^{\pm S(z)}
$ as $z\to+\infty.$
Thus, the classical Liouville-Green asymptotic form remains valid in our paper.
Unlike the integrable case, the accumulation of $q(z)$ affects the leading order by $S(z)$. The result is a correction that decays slower than the main exponential term.

\medskip

\noindent
\textbf{For the critical case $p=2$.}
This is the transition between those two cases.
Hence the power $z^{-2}$ no longer determines whether the perturbation is the integrable or the non-integrable,
which depends on the slowly varying factor $L(z)$. Moreover, even within the non-integrable subcase, different asymptotic behaviors of $L(z)$ may generate different fundamental solutions of \eqref{linear-EQ}. This makes $p=2$ the most complex case and requires a more detailed asymptotic analysis.

In what follows, we define the linear operator
$
\mathcal L_z := \frac{{\rm d}^2}{{\rm d}z^2} - q(z)
$
with the two fundamental solutions $(v_+(z),v_-(z))$.
Let
$$
W := v_+(z)v_-'(z) - v_+'(z)v_-(z)
$$
denote the Wronskian of $v_+(z)$ and $v_-(z)$. Obviously,
$
W' = 0,
$
and hence $W$ is a nonzero constant.
Using the method of variation of constants, solving \eqref{trans-EQ} we obtain
\begin{equation}\label{Green-F}
y(z) = Av_+(z) + Bv_-(z) - \frac{v_+(z)}{W} \int_{z_0}^z v_-(s) {\rm e}^{-\lambda s}y^2(s) \, {\rm d}s + \frac{v_-(z)}{W} \int_{z_0}^z v_+(s){\rm e}^{-\lambda s}y^2(s) \, {\rm d}s.
\end{equation}
This representation will be the main tool in the nonlinear asymptotic analysis in the next section.
Regular variation provides sufficiently precise estimates for the fundamental solutions and the Green kernel. Together with a decay estimate for $\phi$, these estimates allow us to control the nonlinear term and recover the precise asymptotic behavior of the forced wave.

Most importantly, we further investigate the  asymptotic stability of forced waves for general initial data.
The proof begins with the exact equation for $v(t,z)=u(t,z)-\phi(z)$. The lower bound
$u\ge m\phi$ allows both $v$ and $-v$ to be controlled by the same
nonnegative linear solution. An exponential change of variables removes
the drift (i.e., $2\lambda u'(z)$) and introduces
$$
 V_m=q(z)+(1+m)\phi,
$$
with $q(z)$ defined by \eqref{r-TRANS},
which is nonnegative and larger than $\lambda^2$ on $z\ll-1$.
Thus, by $V_m>\lambda^2$,
the equation on the left decays strictly faster.
We use standard weak fundamental-solution theory only for the existence
and basic identities of the heat kernel (see
Aronson~\cite[Section~8, Theorem~10]{Aronson1968}). All decay estimates
needed here are obtained by comparison principle and simple calculations.
This weak framework is easy to use, since we choose the comparison function to be a step function.
The main novelty is that the analysis does not depend on a small perturbation assumption. Instead, it yields uniform convergence with explicit algebraic decay rates for general weighted initial perturbations, and further shows that under the additional decay of the initial data, one obtain faster convergence.

Our main results in this paper can be stated as follows:

\begin{theorem}\label{thmfirst1}
Let $c=2\sqrt{r_+}$ and assume that $r(z)$ satisfies (H1).
Suppose that, for some sufficiently large $z_0>0$,
$$
r_+-r(z)=z^{-p}L(z), ~~z\ge z_0,
$$
where $L(z)>0$ is slowly varying function and satisfies \eqref{SVFLP} and \eqref{SVFL}.
The bounded positive solution $\phi$ of \eqref{1.3} satisfying \eqref{in0} has the following asymptotic behavior:

\medskip
\noindent
{\bf (i) integrable case $p>2$.} If $\int_{z_0}^{+\infty} z^{1-p} L(z)\,{\rm d}z<+\infty$, then there exist constants $a>0$ and $b\in \mathbb{R}$ such that
\begin{equation*}
\phi(z)\sim \big( az+b\big){\rm e}^{-\frac{c}{2}z},~as~z\rightarrow+\infty,
\end{equation*}
where
\begin{equation}\label{conffc1}
a=\int_{-\infty}^{+\infty}{\rm e}^{\lambda s}\phi(s)\Big(q(s)+\phi(s)\Big){\rm d}s>0,\,\,\,\,
b=- \int_{-\infty}^{+\infty} s{\rm e}^{\lambda s}\phi(s)\Big(q(s)+\phi(s)\Big){\rm d}s\in\mathbb{R};
\end{equation}

\medskip
\noindent
{\bf (ii) non-integrable case $0<p<2$.} If $\int_{z_0}^{+\infty} z^{1-p} L(z)\,{\rm d}z=+\infty$, $L(\cdot)\in C^1([z_0,+\infty))$ and $z L'(z)\big{/}L(z)\to 0$ as $z\to+\infty$, then there exists a positive constant $C_+$ such that
\begin{equation*}
\phi(z) \sim C_+\Big(\frac{z^p}{L(z)}\Big)^{\frac{1}{4}}\exp\left(-\frac{c}{2} z +\int_{z_0}^zs^{-\frac{p}{2}}\sqrt{L(s)}{\rm d}s\right),~as~z\rightarrow+\infty.
\end{equation*}
 \end{theorem}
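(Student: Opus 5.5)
We work throughout with $y(z)={\rm e}^{\lambda z}\phi(z)$, $\lambda=c/2$, which satisfies \eqref{trans-EQ}. We also use the variation-of-constants representation \eqref{Green-F}. Two a priori facts come first. First, $\phi>0$ is bounded, and comparison with the limiting linear problem gives a crude bound $\phi(z)\le C_\varepsilon {\rm e}^{-(\lambda-\varepsilon)z}$ for $z\ge z_0$. Hence $y(z)=O({\rm e}^{\varepsilon z})$ and the forcing term ${\rm e}^{-\lambda z}y^2$ decays like ${\rm e}^{-(\lambda-2\varepsilon)z}$. Second, $y''=q y+{\rm e}^{-\lambda z}y^2>0$, so $y$ is convex on $[z_0,\infty)$. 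Since $y>0$, $y'$ is nondecreasing and has a limit in $(-\infty,+\infty]$. These two facts make the nonlinear term an exponentially small perturbation of the linear equation \eqref{linear-EQ}, whatever the linear fundamental solutions turn out to be.

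\textbf{Case (i).} Write $y''=g(z)$ with $g:={\rm e}^{\lambda z}\phi(q+\phi)\ge0$; this identity is just \eqref{1.3} rewritten, and it holds on all of $\mathbb R$ (there $q=r_+-r>0$ by (H1)). The plan is to show $g\in L^1(\mathbb R)$ and $zg\in L^1(\mathbb R)$. On $(-\infty,0]$ this is immediate, since $\phi$ is bounded and ${\rm e}^{\lambda z}$ decays. On $[z_0,\infty)$, if $y$ is known to be at most linear, then $zg\le C z(1+z)q+C z^2{\rm e}^{-\lambda z}y$ up to harmless terms, and this is integrable by the integrability hypothesis $\int z^{1-p}L<\infty$. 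Linear growth of $y$ follows from a Gronwall/bootstrap argument on $y(z)\le y(z_0)+y'(z_0)(z-z_0)+\int_{z_0}^z(z-s)g(s)\,ds$: using $\int s\,q(s)\,ds<\infty$ (the Levinson-type estimate for $v''=qv$) gives $y=O(z)$. Integrating $y''=g$ twice from $-\infty$, and using $y,y'\to0$ as $z\to-\infty$ (true because $\phi\to r_-$ while ${\rm e}^{\lambda z}\to0$ and $\phi'\to0$), gives
\[
y'(z)=\int_{-\infty}^z g,\qquad y(z)=\int_{-\infty}^z (z-s)g(s)\,ds=z\int_{-\infty}^zg-\int_{-\infty}^z sg.
\]
By integrability of $g$ and $sg$, this yields $y(z)=az+b+o(1)$ with $a,b$ as in \eqref{conffc1}, and $a>0$ because $g>0$. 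Hence $\phi\sim(az+b){\rm e}^{-\lambda z}$.

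\textbf{Case (ii).} Invoke Lemma~\ref{lem:LG-RV} (Liouville--Green). Its hypothesis $q\in C^1$ together with the condition $zL'/L\to0$ gives the needed smallness of $q'/q^{3/2}\sim -p z^{p/2-1}/\sqrt L\to0$ (this uses $p<2$), and the lemma provides $v_\pm\sim q^{-1/4}{\rm e}^{\pm S}$ with $W$ constant. Since $S(z)\to\infty$ by non-integrability, $v_+$ grows and $v_-$ decays subexponentially. The forcing is $O({\rm e}^{-(\lambda-2\varepsilon)z})$, while $v_\pm$ are subexponential because $S(z)=o(z)$. So both integrals in \eqref{Green-F} converge as $z\to\infty$, and we rewrite
\[
y=\Big(A-\tfrac1W\int_{z_0}^\infty v_-e^{-\lambda s}y^2\Big)v_++\tilde B v_-+\tfrac{v_+}{W}\int_z^\infty v_-e^{-\lambda s}y^2-\tfrac{v_-}{W}\int_z^\infty v_+ e^{-\lambda s}y^2 .
\]
The last two terms are $o(v_-)$ and exponentially small relative to $v_+$. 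Hence $y= C v_+ + \tilde Bv_-+o(v_-)$ with
\[
C=A-\tfrac1W\int_{z_0}^\infty v_-e^{-\lambda s}y^2 ,
\]
and we obtain $y\sim Cv_+$ provided $C\ne0$, which is $C>0$ by positivity. This gives the stated formula with $C_+=C$, using $q^{-1/4}=(z^p/L)^{1/4}$ and $\sqrt q=s^{-p/2}\sqrt L$.

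\textbf{Main obstacle.} The delicate step is excluding $C=0$ in case (ii), i.e.\ excluding the alternative $y\sim\tilde Bv_-$, which would mean $\phi$ decays faster. To rule it out, I would argue by convexity. Since $y''>0$ and $y>0$, $y'$ increases. If $y\sim\tilde Bv_-\to0$, then $y'<0$ everywhere on $[z_0,\infty)$ with $y'\to0$, so $y'(z)=-\int_z^\infty (qy+{\rm e}^{-\lambda s}y^2)\,ds$. This reduces the question to a comparison argument on the whole line. Consider $y$ on $(-\infty,\infty)$: it vanishes at $-\infty$, is convex, and would have to tend to $0$ at $+\infty$. A convex positive function cannot vanish at both ends, which gives the contradiction. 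The same convexity argument, using $y\to0$ at $-\infty$, also gives $y'>0$ everywhere and hence settles positivity of the leading coefficient in both cases.
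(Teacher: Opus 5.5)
Your case (ii) is correct and follows the paper's route: Lemma~\ref{lem:LG-RV}, the representation \eqref{Green-F}, and the convexity/maximum-principle argument to exclude $C_+=0$. You make one real simplification. By feeding the crude bound of Lemma~\ref{lemma2.12new} directly into \eqref{Green-F}, you get convergence of both forcing integrals at once. So you never need the paper's Bihari-type bound on $Y=y/v_+$, whose smallness condition $C_1\int_{z_0}^\infty K\le 1/(2C_0)$ involves a constant $C_0$ that itself depends on $z_0$ through $A,B$. In case (i), your Gronwall estimate for $y(z)/z$ uses only $\int^\infty s\,(q+\phi)\,ds<\infty$. It is a valid and more elementary replacement for the paper's Lemma~\ref{lemma1} plus Green's formula, and it correctly gives $y=O(z)$ and $g\in L^1(\mathbb R)$. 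For $p>3$ your case (i) argument goes through as written.

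\textbf{The gap.} You assert that $zg\in L^1(\mathbb R)$ follows from $\int^\infty z^{1-p}L(z)\,dz<\infty$; that hypothesis controls only $g$, not $zg$. Once $y\sim az$, the dominant part of $zg=z(q+\phi)y$ is $a\,z^{2-p}L(z)$. This is integrable only when $p>3$, or when $p=3$ and $\int^\infty L(s)s^{-1}\,ds<\infty$. For $2<p<3$ the step genuinely fails. Write $y(z)=az-z\int_z^\infty g-\int_{-\infty}^z sg$ and apply Karamata's theorem to both pieces:
\[
y(z)=az-\frac{a}{(p-2)(3-p)}\,z^{3-p}L(z)\bigl(1+o(1)\bigr),\qquad z\to+\infty .
\]
So the $b$ of \eqref{conffc1} equals $-\infty$, and no finite $b$ gives $y=az+b+o(1)$.

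The paper's proof makes the same unjustified move. It splits $\int_{-\infty}^z(z-t)g$ using $\int_{-\infty}^{+\infty}tg$ and asserts $\int_z^{+\infty}(z-t)g\,dt=o(1)$. So this is a defect of the statement in the range $2<p\le 3$, not something your argument could repair.

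Under the stated hypotheses, your argument does establish $y'(z)=\int_{-\infty}^z g\uparrow a\in(0,\infty)$. Hence $y\sim az$, i.e.\ $\phi\sim az\,{\rm e}^{-\frac c2 z}$. Read as a ratio statement, $\phi\sim(az+b){\rm e}^{-\frac c2 z}$ says nothing more than this, for any real $b$. Identifying $b$ through \eqref{conffc1} needs the extra hypothesis $\int^\infty s^2q(s)\,ds<\infty$.
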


\begin{theorem}[{\bf Critical asymptotics for $p=2$}] \label{thm:critical-p2}
Let $c=2\sqrt{r_+}$ and assume that $r(z)$ satisfies (H1).
Suppose that, for some sufficiently large $z_0>0$,
$$
    r_+-r(z)= z^{-2} L(z),~~ z\ge z_0,
$$
where $L(\cdot)\in C^1([z_0,+\infty))$ is positive, slowly varying, and
monotone, and satisfies \eqref{SVFLP}-\eqref{SVFL}, and
$z L'(z)\big{/}L(z)\to 0$ as $z\to+\infty$.
The bounded positive solution $\phi$ of \eqref{1.3} satisfying \eqref{in0} has the following asymptotic behavior:
\begin{equation}\label{unifined-case}
    \phi(z)
    \sim C\,
    \sqrt{z}
    \left(\frac14+L(z)\right)^{-\frac{1}{4}}
    \exp\left\{-\frac{c}{2} z+\int_{z_0}^{z} \frac{\sqrt{1+4L(s)}}{2s}\, {\rm d}s
    \right\},~as~z\to+\infty,
\end{equation}
with constant $C>0$. Moreover, the following more explicit tails hold for constant $C_i>0(i=1,2,3,4,5)$.

\medskip
\noindent
\textnormal{ {\bf(i)} $L(z)\to0$ as $z\to+\infty$: {\bf critical integrable case.}}
If $\int_{z_0}^{+\infty} s^{-1}L(s)\,{\rm d}s<+\infty$, then
\begin{equation*}
    \phi(z)\sim C_1 z {\rm e}^{-\frac{c}{2} z},~~as~z\to+\infty.
\end{equation*}

\medskip
\noindent
\textnormal{{\bf (ii)} $L(z)\to0$ as $z\to+\infty$: {\bf critical non-integrable case.}}
If
    $\int_{z_0}^{+\infty} s^{-1}L(s)\,{\rm d}s=+\infty,$
then
\begin{equation*}
    \phi(z)
    \sim
    C_2 z \exp\left\{-\frac{c}{2} z+\int_{z_0}^{z}\frac{ L(s)}{ s}\, {\rm d}s\right \},~~as~z\to+\infty.
\end{equation*}

\medskip
\noindent
\textnormal{{\bf (iii)} $L(z)\to\ell\in(0,+\infty)$ as $z\to+\infty$.} Then
\begin{equation}\label{cri-a}
    \phi(z)
    \sim
C_3 z^{\alpha_+} \exp\left\{-\frac{c}{2} z+\int_{z_0}^{z}\frac{\sqrt{1+4L(s)}-\sqrt{1+4\ell}}{2s}\, {\rm d}s\right\},~~as~z\to+\infty,
\end{equation}
with $ \alpha_+:=\frac{1+\sqrt{1+4\ell}}{2}$.
In particular, if
$
    \int_{z_0}^{+\infty} s^{-1}|L(s)-\ell|\, {\rm d}s<+\infty,
$
then
\begin{equation}\label{asy:eq}
    \phi(z)
    \sim
    C_4 z^{\alpha_+} {\rm e}^{-\frac{c}{2}z},~~as~z\to+\infty.
\end{equation}

\medskip
\noindent
\textnormal{{\bf (iv) $L(z)\to+\infty$} as $z\to+\infty$.}
Then
\begin{equation*}
    \phi(z)
    \sim
    C_5 \Big(\frac{z^2}{L(z)}\Big)^{\frac{1}{4}}
    \exp\left\{ -\frac{c}{2} z
        + \int_{z_0}^{z}\frac{\sqrt{1+4L(s)}}{2s}\, {\rm d}s\right\},~~as~z\to+\infty.
\end{equation*}
If, in addition,
$
    \int_{z_0}^{+\infty}\frac{{\rm d}s}{s\sqrt{L(s)}}<+\infty,
$
then
\begin{equation*}
    \phi(z)
    \sim
    C_6 \Big(\frac{z^p}{L(z)}\Big)^{\frac{1}{4}}
    \exp\left\{
        -\frac{c }{2} z
        +\int_{z_0}^{z} \frac{ L(s)}{ s}\, {\rm d}s \right\},~~as~z\to+\infty.
\end{equation*}
\end{theorem}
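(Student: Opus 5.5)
The plan is to work throughout with $y(z)={\rm e}^{\lambda z}\phi(z)$, which solves \eqref{trans-EQ}, and to treat $p=2$ as an Euler-type perturbation of the linear equation \eqref{linear-EQ}.

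\textbf{Step 1: Euler substitution.} Set $z={\rm e}^{t}$ and $v(z)=z^{1/2}w(t)$. Then $v''-z^{-2}L(z)v=0$ becomes
\[
w''(t)-\Big(\tfrac14+L({\rm e}^t)\Big)w(t)=0 .
\]
Write $Q(t):=\tfrac14+L({\rm e}^t)$. This potential is bounded below by $\tfrac14>0$. Because $zL'(z)/L(z)\to0$, we get $\frac{{\rm d}}{{\rm d}t}L({\rm e}^t)=o(L({\rm e}^t))$, and monotonicity of $L$ controls the total variation of $Q'/Q$.

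\textbf{Step 2: Liouville--Green system.} Apply the Liouville--Green lemma (Lemma \ref{lem:LG-RV}) in the variable $t$. Its error functional is of the form $\int |Q^{-1/4}(Q^{-1/4})''|\,{\rm d}t$, or in the $C^1$ version, $\int |Q'|/Q^{3/2}$ together with monotonicity. Since $Q$ is monotone and $Q\ge \tfrac14$, we have $\int |Q'|Q^{-3/2}\,{\rm d}t\le 2\,|Q^{-1/2}|$ evaluated between the endpoints, which is finite. This is the step where monotonicity of $L$ is used. We obtain a fundamental system
\[
w_\pm(t)\sim Q(t)^{-1/4}\exp\Big(\pm\int^t\sqrt{Q}\Big).
\]
Returning to $z$, and using $\sqrt{Q}\,{\rm d}t=\frac{\sqrt{1+4L(s)}}{2s}\,{\rm d}s$, this gives
\[
v_\pm(z)\sim \sqrt z\,\Big(\tfrac14+L(z)\Big)^{-1/4}\exp\Big(\pm\int_{z_0}^{z}\frac{\sqrt{1+4L(s)}}{2s}\,{\rm d}s\Big).
\]
Here $v_+$ grows at least like $z$ up to slowly varying factors, while $v_-$ is at most of order $z^{0}$ times a slowly varying factor. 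Their Wronskian is a constant $W\neq0$.

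\textbf{Step 3: Nonlinear identification.} Start from the representation \eqref{Green-F}. We need a priori bounds: $\phi>0$ and $\phi\to0$, and a rough bound $y(z)=O({\rm e}^{\varepsilon z})$ for every $\varepsilon>0$. The rough bound would come from comparison with ${\rm e}^{-(\lambda-\varepsilon)z}$ supersolutions, as in the earlier cases. With it, the forcing ${\rm e}^{-\lambda s}y^2$ is exponentially small. Consequently $\int^{\infty}v_\mp(s){\rm e}^{-\lambda s}y^2(s)\,{\rm d}s$ converges, and we can rewrite the solution as
\[
y=A_\infty v_++B_\infty v_-+(\text{tail integrals}),
\]
where
\[
A_\infty=A-W^{-1}\int_{z_0}^\infty v_-{\rm e}^{-\lambda s}y^2\,{\rm d}s .
\]
The tail integrals are $o(v_-)$, and therefore $o(v_+)$. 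This yields $y\sim A_\infty v_+$ provided $A_\infty\neq0$.

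\textbf{Step 4: Positivity of the leading coefficient.} This is the main obstacle: showing $A_\infty>0$. Positivity of $y$ immediately gives $A_\infty\ge0$. Suppose $A_\infty=0$. Then $y\sim B_\infty v_-$, so $\phi$ decays strictly faster than the ``slow'' branch. To rule this out, I would use a Sturm/energy argument on the whole line. In the case $A_\infty=0$, $y$ would satisfy $y''=(q+\phi)y$ with $q+\phi>0$ on $\mathbb R$, and $y'\to0$ at $+\infty$, because $v_-'=o(1)$ here. Convexity of $y$ then forces $y'\le0$ everywhere. But $y={\rm e}^{\lambda z}\phi\to0$ as $z\to-\infty$ while $y>0$, which contradicts $y$ being nonincreasing. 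This argument needs $v_-'(z)\to0$. That holds because $v_-\lesssim z^{1/2-\sqrt{1+4L}/2}$, which is nonincreasing up to slowly varying factors; I would verify it from $v_-'=(-\tfrac{\sqrt{1+4L}-1}{2z}+o(1/z))v_-$. This establishes \eqref{unifined-case}.

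\textbf{Step 5: The subcases (i)--(iv).} These follow by elementary expansions of the exponent and prefactor.

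(i)--(ii) $L\to0$. Here $\sqrt{1+4L}=1+2L+O(L^2)$, so
\[
\int_{z_0}^{z}\frac{\sqrt{1+4L}}{2s}\,{\rm d}s=\tfrac12\ln z+\int_{z_0}^{z}\frac{L}{s}\,{\rm d}s+O\Big(\int\frac{L^2}{s}\Big).
\]
The remainder $\int L^2/s$ converges in case (i), since $L^2\le L$ eventually. In case (ii) we need $\int L^2/s<\infty$; if this fails, I would keep the correction absorbed into the constant via the unified formula, or sharpen to $\sqrt{1+4L}-1-2L=O(L^2)$ and note that the statement then implicitly requires it. The prefactor $(\tfrac14+L)^{-1/4}\to\sqrt2$.

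(iii) $L\to\ell\in(0,\infty)$. Subtract $\sqrt{1+4\ell}/(2s)$ in the exponent. This produces $z^{\alpha_+}$, and the integrable-deviation condition makes the residual integral converge, giving \eqref{asy:eq}.

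(iv) $L\to\infty$. The prefactor becomes $(z^2/L)^{1/4}$, and $\sqrt{1+4L}=2\sqrt L+O(L^{-1/2})$. Under $\int {\rm d}s/(s\sqrt L)<\infty$, the exponent is $\int\sqrt{L}/s$ up to a convergent error.
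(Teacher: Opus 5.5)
Your outline follows the paper's route. It uses the Euler substitution $z={\rm e}^t$ to get $w''=(\frac14+L({\rm e}^t))w$, then a Liouville--Green/Hartman--Wintner argument in $t$ whose error is controlled by monotonicity of $L$ (Appendix C). After that come variation of constants with an exponentially small forcing, positivity of the leading coefficient, and elementary expansions for (i)--(iv). Two of your justifications need repair.

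In Step 2, the quantity $\int|Q'|Q^{-3/2}\,{\rm d}t$ is finite for \emph{every} monotone $Q\ge\frac14$. So it cannot by itself control the error: steep monotone steps $Q\mapsto 4Q$ already break the Liouville--Green amplitude by a fixed factor at each step. What the argument actually needs is $\kappa\in L^2$ in the phase variable, i.e.\ $\int(Q')^2Q^{-5/2}\,{\rm d}t<\infty$. Your bound gives this once you multiply by $\sup_t|Q'(t)|/Q(t)\le\sup_{z\ge z_0}|zL'(z)|/L(z)<\infty$, which is a cleaner version of the paper's case split. Also, Lemma \ref{lem:LG-RV} cannot be quoted as stated, since its hypotheses concern $z^{-p}L$ with $0<p<2$; you must rerun its proof.

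In Step 3, the bound $y=O({\rm e}^{\varepsilon z})$ is true (Lemma \ref{lemma2.12new}), but not by comparison with ${\rm e}^{-(\lambda-\varepsilon)z}$. At $c=2\lambda$ this function satisfies $\phi''+c\phi'+r\phi=(\varepsilon^2-q)\phi>0$ for large $z$, so it is a subsolution, not a supersolution. Use the first-order argument of Lemma \ref{lemma2.12new} instead: $y''\le\varepsilon y$ for large $z$, and $\psi=y'+\sqrt{\varepsilon}\,y$ satisfies $\psi'\le\sqrt{\varepsilon}\,\psi$. Given that bound, your direct decomposition $y=A_\infty v_++B_\infty v_-+o(v_-)$ (as in the paper's $p>2$ proof) is a fine substitute for the paper's Bihari-type bound on $Y=y/v_+$.

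Your Step 4 is more robust than the paper's. The paper's positivity step, borrowed from Theorem \ref{lem:Y-bounded}, uses $v_-\to0$. That fails in case (i) with $\int s^{-1}L<\infty$, where $v_-\to C_->0$. Your convexity argument works in every case: $y''=(q+\phi)y>0$ on $\mathbb{R}$ and $y'\to0$ at $+\infty$ force $y'<0$, contradicting $y>0$ with $y(-\infty)=0$. You should add that the derivatives of the two tail terms also tend to zero; they are exponentially small integrals times the subexponential $v_\pm'$.

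Your caveat on (ii) is correct. The formula requires $\int_{z_0}^{+\infty}s^{-1}L^2(s)\,{\rm d}s<\infty$, which Corollary \ref{cor:p2-classification} assumes but the theorem omits. For instance, $L(z)=(\ln z)^{-1/2}$ meets every hypothesis, and then $\phi$ is of order $z(\ln z)^{-1}\exp\{-\frac{c}{2}z+\int_{z_0}^z s^{-1}L(s)\,{\rm d}s\}$. Your fallback of absorbing the correction into the constant is therefore impossible when that integral diverges. Likewise, your version of the last formula in (iv), with $\int_{z_0}^z s^{-1}\sqrt{L(s)}\,{\rm d}s$, is the right one. The exponent $\int_{z_0}^z s^{-1}L(s)\,{\rm d}s$ in the statement is a misprint (compare Corollary \ref{cor:p2-classification}(iii)).
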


\begin{remark}
\label{rem:L-slowly-varying}
Condition $z L'(z)\big{/}L(z)\to 0$ as $z\to+\infty$ implies that $L(z)$ is
slowly varying. Indeed, for every fixed $t>0$,
$$
    \ln\frac{L(t z)}{L(z)}
    =
    \int_z^{t z}
    \frac{sL'(s)}{L(s)}\,\frac{{\rm d}s}{s}\leq \sup\limits_{z\leq s\le t z}\left|\frac{sL'(s)}{L(s)}\right |\ln t
    \to 0,~~as~~z\to+\infty,
$$
which implies that \eqref{SVFLP} holds and is
the standard differentiable criteria for slow
variation; see \cite{BGT1987}.
\end{remark}

\begin{remark}
\label{-examples}
The asymptotic formulas in Theorems~\ref{thmfirst1} and \ref{thm:critical-p2} cover a lot of
qualitatively different decay cases.
And note that in all cases, the decay rate of the waves satisfies $\phi(z)\geq C z {\rm e}^{-cz/2}$ for some $C>0$ and $z\gg1$.
For $0<p<2$ the heterogeneity gives a
sub-exponential correction to the critical tail, whereas for $p>2$ the
classical factor $z \,{\rm e}^{-\frac{c}{2} z}$ is obtained.  For $p=2$, the
behavior is determined by the decay rate and accumulated effect of the slowly varying factor $L(z)$.

\end{remark}

In the following, we will study the asymptotic stability of forced waves of \eqref{initialvpbm}; before this,
let ${\rm BUC}(\mathbb{R})$ denote the space of bounded uniformly continuous
real-valued functions on $\mathbb{R}$. We assume that for some fixed $m\in(0,1]$,
\begin{equation}\label{eq:data}
 u_0\in{\rm BUC}(\mathbb{R}),\qquad u_0(z)\ge m\phi(z),~~z\in\mathbb{R}.
\end{equation}
and
\begin{equation}\label{eq:notation}
 \begin{gathered}
 v_0(z)=u_0(z)-\phi(z),\qquad
 W_0(z)={\rm e}^{\lambda z}v_0(z),
 \end{gathered}
\end{equation}
with $\lambda=1/2$ defined by \eqref{defin1}.
Define
\begin{equation}\label{eq:weight}
 a(z)=1+(z-z_1)_+,
 \qquad
 M_\sigma=\int_{\mathbb{R}}\big(a(z)\big)^\sigma|W_0(z)|{\rm d} z,
\end{equation}
where $(s)_+=\max\{s,0\}$, and
\begin{equation}\label{eq:X-norm}
 \|w\|_X=
 \sup_{z\le z_1}|w(z)|
 +\sup_{z\ge z_1}\frac{{\rm e}^{\lambda z}|w(z)|}{a(z)}.
\end{equation}
Throughout, $\|\cdot\|_{L^\infty(\mathbb{R})}$ denotes the uniform norm on $\mathbb{R}$.

\begin{theorem}\label{thm:stab}
Assume (H1) and let $c=2\sqrt{r_+}$. Then for any initial datum $u_0$ satisfying \eqref{eq:data}, \eqref{initialvpbm} has a unique global solution $u(t,\cdot)$ such that
\begin{equation*}
 m\phi(z)\le u(t,z+ct)\le\max\{\|u_0\|_{L^\infty(\mathbb{R})},r_+\},~~t\ge0,\ z\in\mathbb{R}.
\end{equation*}
In particular, the following statements hold.
\begin{enumerate}
\item [{\rm (i)}] If $W_0\in L^\infty(\mathbb{R})$, then there exists some constant $C_1>0$ such that
\begin{equation*}
 \|u(t,\cdot+ct)-\phi(\cdot)\|_{L^\infty(\mathbb{R})}
 \le C_1\big(\|v_0\|_{L^\infty(\mathbb{R})}+
 \|W_0\|_{L^\infty(\mathbb{R})}\big)(1+t)^{-\frac{1}{2}},~~t\ge0.
\end{equation*}
\item [{\rm (ii)}] If $0\le\sigma\le1$ and $M_\sigma<+\infty$ where $M_\sigma$ is defined in \eqref{eq:weight}, then there exists some constant $C_2>0$ such that
\begin{equation*}
 \|u(t,\cdot+ct)-\phi(\cdot)\|_{L^\infty(\mathbb{R})}+\|u(t,\cdot+ct)-\phi(\cdot)\|_X
 \le C_2(\|v_0\|_{L^\infty(\mathbb{R})}+M_\sigma)t^{-1-\frac{\sigma}{2}},~~t\ge1.
\end{equation*}
\item [{\rm (iii)}] If
 $\lim\limits_{z\to+\infty}\frac{u_0(z)}{\phi(z)}=1,$
then
$
 \lim\limits_{t\to+\infty}\|u(t,\cdot+ct)-\phi(\cdot)\|_{L^\infty(\mathbb{R})}=0.
$
\end{enumerate}
\end{theorem}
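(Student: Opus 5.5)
We work in the moving frame $z=x-ct$ and write $w(t,z)=u(t,z+ct)$, so that $w_t=w_{zz}+cw_z+w(r(z)-w)$. The first step is global existence, uniqueness and the two-sided bound. Since $w(r-w)$ is locally Lipschitz, there is a unique local solution in ${\rm BUC}$. For the bounds we use the comparison principle. The constant $\max\{\|u_0\|_{L^\infty},r_+\}$ is a supersolution because $r<r_+$. The function $m\phi$ is a subsolution because
\[
(m\phi)''+c(m\phi)'+m\phi(r-m\phi)=m\phi^2(1-m)\ge0 .
\]
Together these give the a priori bound and hence global existence.

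For the decay estimates, set $v=w-\phi$. It satisfies $v_t=v_{zz}+cv_z+(r-\phi-w)v$, and on the range of $w$ the coefficient obeys
\[
r-\phi-w\le r-(1+m)\phi .
\]
Substituting $v={\rm e}^{-\lambda z}W$ with $\lambda=c/2$ (so $\lambda^2=r_+$) gives
\[
W_t=W_{zz}-\bigl(q+\phi+w\bigr)W ,
\]
with a potential at least $V_m=q+(1+m)\phi\ge0$. By Kato's inequality, $|W|$ is a subsolution of the linear problem $\bar W_t=\bar W_{zz}-V_m\bar W$ with $\bar W(0)=|W_0|$. Hence $|W|\le\bar W$, and $\bar W$ is dominated by the pure heat flow since $V_m\ge0$. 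We also need the behavior on the left. As $z\to-\infty$ we have $V_m\to q(-\infty)+(1+m)r_-=r_+-r_-+(1+m)r_-\ge r_++mr_->\lambda^2$. This is what turns the weight ${\rm e}^{-\lambda z}$, which blows up on the left, into decay: on $z\le -K$ we build a comparison function $\bar W\le C{\rm e}^{-\kappa t}{\rm e}^{\lambda' |z|}$ with $\lambda'$ slightly above $\lambda$ but $\lambda'^2<\inf_{z\le-K}V_m$. To match the regions we use a step-function comparison in the weak (Aronson) framework, putting potential $\lambda'^2$ on $(-\infty,-K]$ and $0$ elsewhere.

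With these bounds the three estimates follow.

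(i) For $z\ge z_1$, $|v|\le {\rm e}^{-\lambda z}\|W_0\|_\infty$. For $z$ in a compact set, a heat kernel estimate with the absorbing potential on the left (Dirichlet-like behavior) gives $O(t^{-1/2})$ decay; this is where the half-line heat kernel bound $\|\cdot\|_\infty\lesssim t^{-1/2}$ enters. Combining the two regions gives the $(1+t)^{-1/2}$ rate in $L^\infty$, using ${\rm e}^{-\lambda z}\bar W$ and $\|v_0\|_\infty$ for $t\le1$.

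(ii) Here we use the half-line Dirichlet kernel, $G(t,z,y)\lesssim t^{-3/2}\,a(z)\,a(y)\,{\rm e}^{-(z-y)^2/8t}$. For $0\le\sigma\le1$ this yields
\[
\sup_z \frac{\bar W(t,z)}{a(z)}\lesssim t^{-1-\sigma/2}\int a(y)^\sigma|W_0(y)|\,{\rm d}y ,
\]
by interpolating $\min\{1,a(y)/\sqrt t\}\le (a(y)/\sqrt t)^\sigma$. This gives the $X$-norm bound, and the uniform norm follows since ${\rm e}^{-\lambda z}a(z)$ is bounded on $z\ge z_1$.

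(iii) We use monotonicity and squeezing. Given $\varepsilon>0$, the hypothesis lets us sandwich $u_0$ between data $u_0^\pm$ satisfying \eqref{eq:data} with $u_0^\pm\to\phi$ ratio-wise and with $|u_0^\pm-\phi|\le \varepsilon\phi$ for large $z$, so that the weighted part of $W_0^\pm$ is $O(\varepsilon z)$ plus compactly supported. Applying (i) or (ii) to the compactly supported part and the linear bound to the rest gives $\limsup\|w-\phi\|_\infty\le C\varepsilon$.

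The main obstacle is making the Dirichlet-type kernel bound rigorous with only $V_m\ge0$. The potential does not vanish on a half-line; it is only large for $z\ll-1$. The plan is to compare with the operator whose potential is $\lambda'^2\mathbf 1_{z<-K}$, and then use an explicit barrier, for example $a(z)$-type linear growth, to show that this operator's kernel behaves like the Dirichlet one up to constants. The point that the linear barrier $1+(z+K)_+$ is a supersolution requires checking the jump at $-K$ in the weak sense.
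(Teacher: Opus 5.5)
Your overall architecture is the paper's: the subsolution $m\phi$, the weighted variable $W={\rm e}^{\lambda z}v$ with absorbing potential $V_m=q+(1+m)\phi$, comparison with a step potential in Aronson's weak framework, the interpolation $\min\{1,a(y)/\sqrt t\}\le (a(y)/\sqrt t)^\sigma$, and an $\varepsilon\phi$-plus-remainder splitting in (iii). However, the step that actually produces the rates is missing, and the device you suggest for it fails.

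All three parts rest on two estimates. The first is the survival-mass bound $S_m(t,z)=\int K_m(t,z,y)\,{\rm d}y\le C\min\{1,a(z)/\sqrt t\}$. This bound, not an $L^\infty\to L^\infty$ bound of the semigroup (which is $1$), is what gives (i) for merely bounded $W_0$. It does so uniformly on $z\ge z_1$ because ${\rm e}^{-\lambda z}a(z)$ is bounded there. The second is the kernel bound $K_m(t,z,y)\le Ct^{-1/2}\min\{1,a(z)/\sqrt t\}\min\{1,a(y)/\sqrt t\}$, needed for (ii).

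Your barrier $1+(z+K)_+$ is not a supersolution. Its kink at $-K$ is convex, so $-\partial_{zz}$ of it contains $-\delta_{-K}$, which a bounded $V\ge0$ cannot compensate. Even a corrected $C^1$ stationary barrier only gives a time-independent bound, never $t^{-1/2}$ decay.

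The paper closes this gap in three steps (Lemmas \ref{lem:mass-decay} and \ref{lem-kernel2}). It compares $S_m(t,z)\le S_*(t,z-z_1)$, where $S_*$ solves the problem with potential $\kappa\mathbf 1_{(-\infty,0]}$ and datum $1$. It computes the Laplace transform $R_s$ of $S_*$ explicitly to get $sR_s(z)\le C\sqrt s\,(1+z_+)$. It then converts back using the monotonicity of $t\mapsto S_*(t,z)$, which gives $R_{1/t}(z)\ge(1-{\rm e}^{-1})\,t\,S_*(t,z)$. The kernel bound then follows from Chapman--Kolmogorov over three thirds of $[0,t]$ plus symmetry: $K_m(t,z,y)\le \sup_x G(t/3,x)\,S_m(t/3,z)\,S_m(t/3,y)$.

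The left half-line is a second gap. These bounds give no control of $v={\rm e}^{-\lambda z}W$ as $z\to-\infty$. Your barrier ${\rm e}^{-\kappa t}{\rm e}^{\lambda'|z|}$ with $\lambda'>\lambda$ does not supply it under either sign:
with ${\rm e}^{+\lambda'|z|}$ it only yields $|v|\lesssim{\rm e}^{(\lambda+\lambda')|z|}$;
with ${\rm e}^{-\lambda'|z|}$ it fails to dominate $|W_0|={\rm e}^{-\lambda|z|}|v_0|$ at $t=0$ unless $v_0$ decays exponentially at $-\infty$, which is not assumed.
Moreover, no barrier carrying a factor ${\rm e}^{-\kappa t}$ can dominate the solution at the interface, where it decays only algebraically.

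The barrier that works has spatial profile exactly ${\rm e}^{\lambda z}$ in the $W$-frame, i.e.\ it is constant in the $v$-frame, with algebraic time decay. Take $F_\beta(t)=T_\beta^\beta(E+B)(t-1+T_\beta)^{-\beta}$, with $E={\rm e}^{\lambda^2}\|v_0\|_{L^\infty(\mathbb{R})}$ and $T_\beta=\max\{1,\beta/\eta\}$. It is a supersolution on $z\le z_1$ because $r-(1+m)\phi\le-\eta$ there. It dominates $h$ at $t=1$ and at $z=z_1$ once the decay $h(t,z_1)\le Bt^{-\beta}$ is known from the kernel bound (Lemma \ref{lem:left-transfer}).

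Finally, in (iii) your claim that the $\varepsilon$-part of $W_0$ is $O(\varepsilon z)$ is unavailable under (H1) alone, since ${\rm e}^{\lambda z}\phi$ can grow like $z^{\alpha_+}$ or faster. What you need instead is that $\varepsilon\phi$ is a stationary supersolution of the linear $h$-equation, since $\phi''+2\lambda\phi'+(r-(1+m)\phi)\phi=-m\phi^2\le0$. This gives $|v|\le\varepsilon\phi+h_{g_\varepsilon}$ with $g_\varepsilon=(|v_0|-\varepsilon\phi)_+$.
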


\begin{remark}
Our results cover the asymptotic stability results for the critical traveling wave of the Fisher-KPP equation considered in \cite{FH2019}. Our choice of initial data does not need to be sufficiently close to the wave profile at all times. Moreover,
the result of Theorem 1.3 (iii) has already been proved in \cite{TangWuXu}. However, in this paper we provide a new short approach.
\end{remark}

\textbf{This paper is organized as follows.}
In Section 2, we first give a complete classification of the resource functions and prove the exact decay rates of the corresponding forced waves
(Theorems 1.1-1.2). In Section 3, we prove the asymptotic stability of all forced waves and establish the convergence rates (Theorem 1.3).

\section{Spatial decay of forced waves}

In the following, we analyze the fundamental solutions of the associated linear equation \eqref{linear-EQ} in the cases $0<p<2$, $p=2$ and $p>2$, and finally use the Green-function estimates to derive the spatial decay of forced waves \eqref{1.3} satisfying \eqref{in0}.
Before this,  we first give a rough uniform upper bound for $\phi$ in a neighborhood of $+\infty$.

\begin{lemma}\label{lemma2.12new}
Under (H1), let $c=2\sqrt{r_+}$, if $\phi(z)$ is a positive solution of \eqref{1.3} satisfying \eqref{in0}, then there exist some $K>0$ and $0<\eta<\frac{c}{2}$
such that
\begin{align}\label{2.82}
|\phi(z)|\leq K {\rm e}^{-\eta z}\quad \text{for}~z\gg1,
\end{align}
\end{lemma}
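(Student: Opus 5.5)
The estimate will come from comparison with an explicit exponential supersolution of the linearized problem near $+\infty$, exploiting that $r(z)<r_+$ and $\phi>0$. For $0<\eta<\lambda=c/2$, set $\bar\phi(z)=K{\rm e}^{-\eta z}$. Using $r(z)<r_+$, $\phi>0$ and $c=2\sqrt{r_+}$,
\[
\bar\phi''+c\bar\phi'+r(z)\bar\phi \le \bar\phi\,(\eta^2-c\eta+r_+)=\bar\phi\,(\eta-\lambda)^2 .
\]
This is positive, so $\bar\phi$ is not a supersolution of the linear operator. The missing strict negativity has to come from elsewhere. I would therefore first establish the weaker qualitative fact $\phi(z)\to0$ together with $\phi'(z)/\phi(z)\to-\lambda$ as $z\to+\infty$, which suffices.

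The plan is a phase-plane/Riccati argument. Set $w=\phi'/\phi$, which satisfies
\[
w'=-w^2-cw-r(z)+\phi .
\]
Since $r(z)\to r_+$, $\phi\to0$ and $c^2=4r_+$, the limiting equation is $w'=-(w+\lambda)^2$, whose only equilibrium is $w=-\lambda$. To fix the sign of $w$, I would use that $\phi$ is eventually monotone decreasing. This holds because at a local minimum with small $\phi$ we have $\phi''=-\phi(r-\phi)<0$, a contradiction. Hence $w\le0$ for large $z$.

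Boundedness of $w$ comes from Harnack-type estimates for the linear elliptic equation satisfied by $\phi$. Then $w'\le -(w+\lambda)^2+o(1)$, and $w$ is eventually monotone up to small errors. A standard argument shows that $w$ has a limit, and that this limit must satisfy $(w+\lambda)^2=0$. So $w\to-\lambda$.

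Consequently, for any $\eta<\lambda$ there is $z_1$ with $\phi'/\phi\le-\eta$ on $[z_1,\infty)$. Integrating gives
\[
\phi(z)\le \phi(z_1){\rm e}^{-\eta(z-z_1)}=K{\rm e}^{-\eta z}.
\]
This proves \eqref{2.82} with any $\eta\in(0,c/2)$.

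The main obstacle is rigorously justifying convergence of $w$ when the Riccati equation is only asymptotically autonomous and degenerate, since the equilibrium is a double root. As a first attempt I would argue by contradiction. If $\limsup w>-\eta$ for some $\eta<\lambda$, then on those intervals $w'\le-(\lambda-\eta)^2/2<0$, so $w$ decreases whenever it is above $-\eta$. It therefore cannot return above $-\eta$ infinitely often unless it crosses downward, and once $w$ is below $-\eta$ it stays there. This yields $w\le-\eta$ eventually without needing the full limit. The only inputs are $w\le0$, which keeps $w$ from blowing up upward, and $\phi(z)\to0$ from \eqref{in0}.
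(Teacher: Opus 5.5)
Your proof is correct, and its last paragraph alone establishes the lemma. It takes a different, though closely related, route from the paper.

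The paper stays linear. With $v={\rm e}^{\lambda z}\phi$ one has $v''=(q+\phi)v\le\varepsilon v$ for large $z$. The factorization $\psi=v'+\sqrt{\varepsilon}\,v$ then gives $\psi'\le\sqrt{\varepsilon}\,\psi$, and integrating twice gives $v\le C{\rm e}^{\sqrt{\varepsilon}z}$.

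You work instead with the Riccati variable $w=\phi'/\phi$. Since $w+\lambda=v'/v$, your equation is exactly $w'=-(w+\lambda)^2+q+\phi$, and your invariant-region argument replaces the double integration. Once $q+\phi\le(\lambda-\eta)^2/2$, one has $w'\le-(\lambda-\eta)^2/2$ wherever $w\ge-\eta$. Hence $\{w<-\eta\}$ is forward invariant. Moreover, $w$ cannot remain $\ge-\eta$ forever, because that uniform negative slope would force $w\to-\infty$. State this second half explicitly; the sentence ``cannot return above $-\eta$ infinitely often unless it crosses downward'' is garbled.

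What your route buys is a pointwise log-derivative bound $\phi'/\phi\le-\eta$ for large $z$, which gives eventual monotonicity of $\phi$ for free. The paper's route avoids dividing by $\phi$ and bounds $v$ directly. Both give every $\eta\in(0,c/2)$ and use only $q+\phi\to0$ and $\phi>0$.

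The middle of your proposal should be dropped. Eventual monotonicity, the Harnack bound on $w$, and the convergence $w\to-\lambda$ are never used. Nor is $w\le0$ what prevents upward escape: $w$ is a globally defined $C^1$ function because $\phi>0$ is $C^2$, and the differential inequality itself is the barrier.

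If you do want $w\to-\lambda$, the lower bound your ``standard argument'' needs comes from finite-$z$ blow-down. Suppose $w+\lambda<-\sqrt{2\sup_{s\ge z_1}\big(q(s)+\phi(s)\big)}$ at some $z\ge z_1$. Then $(w+\lambda)'\le-(w+\lambda)^2/2$ from that point on, which sends $w$ to $-\infty$ in finite $z$ and is therefore impossible.
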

\begin{proof}
Let
\begin{equation}\label{trans}
v(z) := {\rm e}^{\frac{c}{2} z} \phi(z).
\end{equation}
A direct computation gives
\begin{equation}\label{ytm10}
\phi'' (z) + c \phi' (z) + r_+ \phi (z)= {\rm e}^{-\frac{c}{2} z} v'' (z),
\end{equation}
it follows from \eqref{1.3} that
\begin{equation}\label{la0}
v''(z) = \big(q(z) + \phi(z) \big)  v(z),
\end{equation}
with $q(z) := r_+ - r(z)\geq 0$.
Note that $q(z)\to 0^+$ and $\phi(z)\to 0^+$ as $z\to+\infty$.
Choose
$$
0<\varepsilon<\frac{c^2}{4}.
$$
Then there exists $z_0\gg1$ such that
$q(z)+\phi(z)<\varepsilon $ for $z>z_0$.
Hence
\begin{equation}\label{EQ-ref}
v''(z)\leq \varepsilon v(z),\quad z>z_0.
\end{equation}
Set
\begin{equation}\label{y-ineq}
\psi(z)= v'(z)+ k v(z),
\end{equation}
where $k>0$ will be specified later.
Differentiating both sides of \eqref{y-ineq} with respect to $z$, and then applying \eqref{EQ-ref} leads to
\begin{equation*}
\begin{aligned}
\psi'(z)=v'' (z) +k v' (z) &=
v'' (z) + k \Big(\psi(z)- k v (z)\Big)\\
&\leq \Big(\epsilon - k^2\Big) v(z) + k \psi (z),~z>z_0.
\end{aligned}
\end{equation*}
We can choose $k=\sqrt{\epsilon}$ and $C_1>0$, then $\psi(z)\leq C_1 {\rm e}^{\sqrt{\epsilon} z}$ for $z\gg1$. Thus, by \eqref{y-ineq} yields that
$$
v(z)\leq \frac{C_1}{2\sqrt{\epsilon}}{\rm e}^{\sqrt{\epsilon} z}+ {\rm e}^{\sqrt{\epsilon} (z_0-z)}v(z_0),~~z>z_0,
$$
 Thus, together with \eqref{trans}, we obtain
$$
\phi(z)
={\rm e}^{-\frac{c}{2}z}v(z)
\leq
K {\rm e}^{\left(-\frac{c}{2}+\sqrt{\varepsilon}\right)z},
\,\,\,\, z>z_0.
$$
Set
$
\eta:=\frac{c}{2}-\sqrt{\varepsilon}.
$
Since $0<\varepsilon<c^2/4$, we have
$
0<\eta<\frac{c}{2},
$
which proves \eqref{2.82}.
\end{proof}

\subsection{The integrable case $p>2$}

\begin{lemma}\label{lemma1}
Under (H1) with $c=2\sqrt{r_+}$, suppose
$$
r_+-r(z) = z^{-p}L(z), \quad p > 2,
$$
where $L(z)$ is positive and slowly varying defined by \eqref{SVFLP} and \eqref{SVFL}. Then \eqref{linear-EQ}
admits fundamental solutions $v_-$ and $v_+$ such that
$$
v_-(z) = 1 + O\Big(z^{2-p}L(z)\Big), \quad v_+(z) = z + o(z),~as~z\to+\infty.
$$
which can be normalized such that
$
W(v_-,v_+):=v_+(z)v_-'(z)-v_+'(z)v_-(z)\equiv -1.
$
\end{lemma}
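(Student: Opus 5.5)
I will construct the recessive solution $v_-$ by a fixed-point argument near $+\infty$, and then obtain $v_+$ by reduction of order. The key quantitative input is Karamata's theorem for regularly varying tails. Since $p>2$, the function $s^{1-p}L(s)$ is regularly varying with index $1-p<-1$. Hence
$$
\int_z^{+\infty} s\,q(s)\,{\rm d}s\sim \frac{z^{2-p}L(z)}{p-2},\qquad
\int_z^{+\infty} q(s)\,{\rm d}s\sim \frac{z^{1-p}L(z)}{p-1},\qquad z\to+\infty .
$$
In particular $\int^{+\infty} s q(s)\,{\rm d}s<+\infty$, and by Potter's bound $z^{2-p}L(z)\to0$.

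\textbf{Step 1: the recessive solution $v_-$.} I look for $v_-$ as a solution of the Volterra integral equation
$$
v_-(z)=1-\int_z^{+\infty}(s-z)\,q(s)\,v_-(s)\,{\rm d}s,\qquad z\ge z_1 ,
$$
where $z_1\ge z_0$ is large. Differentiating twice gives $v_-''=q v_-$. On $X=C_b([z_1,+\infty))$ with the sup norm, the right-hand side defines a map $T$. Its Lipschitz constant is at most $\int_{z_1}^{+\infty}(s-z_1)q(s)\,{\rm d}s\le\int_{z_1}^{+\infty}sq(s)\,{\rm d}s<1$ once $z_1$ is large, so $T$ is a contraction and has a fixed point. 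The fixed point is bounded, so
$$
|v_-(z)-1|\le \|v_-\|_\infty\int_z^{+\infty} s\,q(s)\,{\rm d}s=O\bigl(z^{2-p}L(z)\bigr).
$$
Also $v_-'(z)=\int_z^{+\infty}q v_-\,{\rm d}s=O(z^{1-p}L(z))$. Since \eqref{linear-EQ} is linear with continuous coefficients, $v_-$ extends to all of $\mathbb{R}$, which gives the stated expansion of $v_-$.

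\textbf{Step 2: the dominant solution $v_+$.} Because $v_-\to1$, we have $v_->0$ on some $[z_2,+\infty)$. There I set
$$
v_+(z)=v_-(z)\int_{z_2}^{z}\frac{{\rm d}s}{v_-(s)^2},
$$
and extend it to $\mathbb{R}$ as the solution of \eqref{linear-EQ} with the same Cauchy data at $z_2$. The standard reduction-of-order identity gives $v_+v_-'-v_+'v_-=-v_-^2\cdot v_-^{-2}=-1$. This holds at every point, because $W'=0$ for \eqref{linear-EQ}, which has no first-order term; so $W(v_-,v_+)\equiv-1$ with no further rescaling. Finally, $v_-^{-2}=1+o(1)$ gives $\int_{z_2}^z v_-^{-2}\,{\rm d}s=z+o(z)$, and therefore $v_+(z)=(1+o(1))(z+o(z))=z+o(z)$.

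\textbf{Main obstacle.} I expect the only delicate point to be Step 1, namely making the error term precise as $O(z^{2-p}L(z))$ rather than just $o(1)$. This is exactly where Karamata's theorem (\cite[Prop.~1.5.10]{BGT1987}) is needed, via the representation \eqref{SVFL}. It applies because $1-p<-1$, so the tail integral of $s\,q(s)$ is comparable to $z^{2-p}L(z)$. The contraction estimate itself is routine once $z_1$ is chosen with $\int_{z_1}^{+\infty}sq\,{\rm d}s\le\frac12$.
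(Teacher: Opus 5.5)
Your plan follows the paper's Appendix A proof: a contraction for the Volterra equation on $[z_1,+\infty)$ with Lipschitz constant at most $\int_{z_1}^{+\infty}sq(s)\,{\rm d}s<1$, Karamata's theorem to turn $\int_z^{+\infty}sq\,{\rm d}s$ into $O(z^{2-p}L(z))$, and reduction of order giving $W(v_-,v_+)\equiv-1$.

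There is a sign error in Step 1. Write $F(z)=\int_z^{+\infty}(s-z)q(s)v(s)\,{\rm d}s$. Then $F'=-\int_z^{+\infty}qv\,{\rm d}s$ and $F''=qv$. So your equation $v_-=1-F$, which matches your formula $v_-'=+\int_z^{+\infty}qv_-$, gives $v_-''=-qv_-$. That solves $v''+qv=0$, not \eqref{linear-EQ}, and the claim ``differentiating twice gives $v_-''=qv_-$'' is false as written.

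The fix is to take
\[
v_-(z)=1+\int_z^{+\infty}(s-z)\,q(s)\,v_-(s)\,{\rm d}s ,
\]
as the paper does, so that $v_-'=-\int_z^{+\infty}qv_-\,{\rm d}s$. Your contraction and error estimates use only absolute values, so the rest of the argument goes through unchanged after this correction.
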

\begin{proof}
{\rm The proof of Lemma \ref{lemma1} is left in the Appendix A.}
\end{proof}

\begin{theorem}\label{thm:p>2-nonlinear}
Under (H1) with $c=2\sqrt{r_+}$, let
$$
r_+-r(z)=z^{-p}L(z),~\,~p>2,
$$
where $L(z)>0$ is a positive, slowly varying function defined by \eqref{SVFLP} and \eqref{SVFL}.
Then
there exist constants $a>0$ and $b\in \mathbb{R}$ such that the positive solution $y(z)$ of
\eqref{trans-EQ} satisfying
\begin{equation*}
y(z)\sim az+b,~as~z\rightarrow+\infty,
\end{equation*}
with $a$ and $b$ defined by \eqref{conffc1}.
\end{theorem}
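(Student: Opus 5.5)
The plan is to avoid the Green representation \eqref{Green-F} and work directly with the global equation for $y={\rm e}^{\lambda z}\phi$. By \eqref{la0} it reads
\begin{equation*}
y''(z)=g(z),\qquad g(z):={\rm e}^{\lambda z}\phi(z)\big(q(z)+\phi(z)\big)\ge0,\qquad z\in\mathbb{R}.
\end{equation*}
This holds on the whole line, not only near $+\infty$, with $q\ge0$ by (H1).

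First I would examine $z\to-\infty$. There $\phi\to r_->0$, and by standard ODE arguments $\phi'\to0$. Hence $y={\rm e}^{\lambda z}\phi$ and $y'=\lambda y+{\rm e}^{\lambda z}\phi'$ both tend to $0$ exponentially, and $g(s)=O({\rm e}^{\lambda s})$ as $s\to-\infty$. Integrating once gives
\begin{equation*}
y'(z)=\int_{-\infty}^z g(s)\,{\rm d}s,
\end{equation*}
so $y'$ is positive and nondecreasing. The candidate for $a$ is therefore $\int_{-\infty}^{+\infty}g$, as in \eqref{conffc1}. It is positive because $g>0$.

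The key step is to show that $y'$ stays bounded, that is, $\int^{+\infty}g<\infty$. Fix $z_0\gg1$ and set $M(z):=\max_{[z_0,z]}y'$. Then $y(s)\le y(z_0)+(s-z_0)M(s)$ for $s\ge z_0$, and hence
\begin{equation*}
M(z)\le y'(z_0)+\int_{z_0}^{z}\big(q(s)+\phi(s)\big)\big(y(z_0)+(s-z_0)M(s)\big)\,{\rm d}s .
\end{equation*}
Gronwall's inequality then bounds $M$ uniformly, provided $\int_{z_0}^\infty s\,(q+\phi)\,{\rm d}s<\infty$. This condition holds for two reasons. For the $q$ part, $s\,q(s)=s^{1-p}L(s)$ with $p>2$, and Potter's bound $L(s)=o(s^{\delta})$ with $\delta<p-2$ makes it integrable. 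For the $\phi$ part, Lemma \ref{lemma2.12new} gives $\phi\le K{\rm e}^{-\eta s}$, which is integrable against $s$. Consequently $y'\uparrow a<\infty$, and then $y(z)=az+o(z)$. This already yields the asymptotic equivalence $y\sim az+b$ in the ratio sense, since $a>0$.

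To identify $b$, I would use the exact identity
\begin{equation*}
y(z)=\int_{-\infty}^z(z-s)g(s)\,{\rm d}s=az-\int_{-\infty}^{+\infty}s\,g(s)\,{\rm d}s-\int_z^{+\infty}(z-s)g(s)\,{\rm d}s,
\end{equation*}
which is valid whenever $\int|s|g<\infty$. The last term is bounded by $\int_z^\infty s\,g$ and so tends to $0$, giving $y-az\to b$ with $b$ as in \eqref{conffc1}.

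The main obstacle is the integrability of $s\,g(s)\approx a\,s^2q(s)$ at $+\infty$. Since $y\approx as$ there, this requires $\int^\infty s^{2-p}L(s)\,{\rm d}s<\infty$, which is automatic only for $p>3$. The Lemma \ref{lemma1} bounds on $v_\pm$ do not help here, because $b$ is already the full first moment of $g$. For $2<p\le3$ I would therefore settle for the ratio statement: $y/(az+b)\to1$ holds for any finite $b$. In that range, $b$ in \eqref{conffc1} would have to be read as an improper, or possibly divergent, moment. I would state this caveat explicitly rather than claim $y-az-b\to0$ there.
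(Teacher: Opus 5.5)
Your argument is correct, and for the part of the statement that is actually true it takes a cleaner route than the paper.

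\textbf{Comparison of routes.} The paper gets the linear growth of $y$ in three stages. First it constructs the fundamental system $v_-=1+O(z^{2-p}L)$, $v_+=z+o(z)$ of $v''=qv$ (Lemma \ref{lemma1}, via a contraction and reduction of order). Then it treats ${\rm e}^{-\lambda z}y^2={\rm e}^{\lambda z}\phi^2$ as a forcing term. This term is exponentially small only because Lemma \ref{lemma2.12new} gives $\phi$ a decay rate above $\lambda/2$. From \eqref{Green-F} it then gets $y=a'v_++b'v_-+o(1)$. Only after that does it integrate $y''=(q+\phi)y$ twice from $-\infty$ to match the constants with \eqref{conffc1}.

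You replace the whole first stage with two ingredients: the sign structure $y''=(q+\phi)y>0$ on $\mathbb{R}$, and a Gronwall bound for the nondecreasing function $y'$. That bound needs only $\int^{\infty}s\,(q+\phi)\,{\rm d}s<\infty$. As a bonus, $a=\lim y'=\int_{\mathbb{R}}g$ is identified with \eqref{conffc1} and shown to be positive at once.

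What the paper's route buys is an additive expansion against the exact linear modes $v_\pm$, with an exponentially small error, which is meaningful for every $p>2$. However, $v_+-z$ does not converge when $2<p<3$. So the coefficient of $v_-$ is not a constant term relative to $z$, and the paper conflates the two.

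\textbf{Your caveat is correct and can be stated definitively.} The paper splits $\int_{-\infty}^z(z-t)g(t)\,{\rm d}t$ and asserts $\int_z^{+\infty}(z-t)g(t)\,{\rm d}t=o(1)$, having checked only $\int_{\mathbb{R}}g<\infty$. Both steps require $\int^{\infty}t\,g(t)\,{\rm d}t<\infty$, i.e. $\int^{\infty}t^{2-p}L(t)\,{\rm d}t<\infty$.

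For $2<p<3$ this integral diverges. Indeed $g(t)\sim a\,t^{1-p}L(t)$, and by Potter's bound $L(t)\ge t^{-\delta}$ eventually. Hence $b$ in \eqref{conffc1} equals $-\infty$. Your own identity
\[
y(z)-az=-\int_{-\infty}^{z}s\,g(s)\,{\rm d}s-z\int_{z}^{+\infty}g(s)\,{\rm d}s,
\]
combined with Karamata's theorem, gives
\[
y(z)=az-\frac{a}{(3-p)(p-2)}\,z^{3-p}L(z)\,(1+o(1)).
\]
So $y-az\to-\infty$, and no finite $b$ works additively.

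Consequently, for $2<p<3$, and for $p=3$ whenever $\int^{\infty}s^{-1}L(s)\,{\rm d}s=\infty$, the theorem holds only as $y(z)/z\to a$ with $a$ from \eqref{conffc1}. The claimed $b\in\mathbb{R}$ does not exist there. For $p>3$, your proof gives the full statement $y(z)-az-b\to0$.
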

\begin{proof}
Let $y(z)$ be the positive solution of \eqref{trans-EQ}.
For all
$
 \mu\in\left(\frac{\lambda}{2},\lambda\right),
$
we set $\eta:=2\mu-\lambda>0$, which with $y(z)={\rm e}^{\lambda z}\phi(z)$ and
Lemma \ref{lemma2.12new} gives that
\begin{equation}\label{eq:f-exp}
 f(z):={\rm e}^{-\lambda z}y^2(z)
 ={\rm e}^{\lambda z}\phi^2(z)
 \le K {\rm e}^{-(2\mu-\lambda)z}
 =K {\rm e}^{-\eta z},~~z\gg1,
\end{equation}
for some constant $K>0$ and $\lambda=c/2$.
Hence, by Lemma \ref{lemma1}
\begin{equation}\label{integrable-1}
 \int_{z_0}^{+\infty}|v_-(s)|f(s)\,{\rm d}s<+\infty,
 \qquad
 \int_{z_0}^{+\infty}|v_+(s)|f(s)\, {\rm d}s<+\infty,
\end{equation}
with $v_+(z)$ and $v_-(z)$ defined in Lemma \ref{lemma1}.
We further define
\begin{equation*}
 a:=
 A-\frac{1}{W}\int_{z_0}^{+\infty}v_-(s)f(s)\, {\rm d}s,\,\,\,\,
 b:=
 B+\frac{1}{W}\int_{z_0}^{+\infty}v_+(s)f(s)\, {\rm d}s,
\end{equation*}
for some constant $z_0>0$, which with \eqref{Green-F} gives that
\begin{equation}\label{tail-1}
 y(z)
 =
 a v_+(z)+b v_-(z)
 +\frac{v_+(z)}{W}
   \int_{z}^{+\infty}v_-(s)f(s)\, {\rm d}s
 -\frac{v_-(z)}{W}
   \int_{z}^{+\infty}v_+(s)f(s)\, {\rm d}s.
\end{equation}
In virtue of \eqref{eq:f-exp}-\eqref{integrable-1}, $v_-(z)=O(1)$, and
$v_+(z)=O(1+z)$, it follows that
\begin{align*}
 \left|
 \frac{v_+(z)}{W}\int_z^{+\infty} v_-(s)f(s)\, {\rm d}s
 \right|
 &=
 O((1+z){\rm e}^{-\eta z}),\\
 \left|
 \frac{v_-(z)}{W}\int_z^{+\infty} v_+(s)f(s)\,{\rm d}s
 \right|
 &=
 O((1+z){\rm e}^{-\eta z}).
\end{align*}
Therefore \eqref{tail-1} leads to
\begin{equation}\label{AsY-EQ}
 y(z)=a v_+(z)+b v_-(z)+o(1),~~{\rm as}~z\to+\infty.
\end{equation}

Next, we shall compute the values of a and b.
Since
$r_+-r(z)=:q(z)=z^{-p}L(z),\,~p>2,
$
by Karamata's theorem \cite[Theorem 1.5.11]{BGT1987} yields
$
\int_{z_0}^{+\infty} z q(z)\,{\rm d}z<+\infty$,
which with \eqref{AsY-EQ} and $\phi(-\infty)=r_-$ imply that
\begin{equation*}
\int_{-\infty}^{+\infty}y(z)\Big\{q(z)+\phi\Big\}{\rm d}z<+\infty.
\end{equation*}
Integrating \eqref{trans-EQ} from $-\infty$ to $z$ twice and
interchanging the order of integration, we get
\begin{equation*}
\begin{aligned}
y(z)
=&\int_{-\infty}^z\int_{-\infty}^xy(t)\Big(q(t)+\phi(t)\Big){\rm d}t\\
=&\int_{-\infty}^z\big(z-t\big)y(t)\Big(q(t)+\phi(t)\Big){\rm d}t\\
=& z\int_{-\infty}^{+\infty}y(t)\Big(q(t)+\phi(t)\Big){\rm d}t
- \int_{-\infty}^{+\infty} ty(t)\Big(q(t)+\phi(t)\Big){\rm d}t \\
&- \int_z^{+\infty} \big(z-t\big) y(t)\Big(q(t)+\phi(t)\Big){\rm d}t=a+b+o(1),
\end{aligned}
\end{equation*}
with
$
\int_z^{+\infty} \big(z-t\big) {\rm e}^{\lambda t}\phi(t)\big(q(t)+\phi(t)\big){\rm d}t=o(1)~{\rm as}\,\,z\to+\infty,
$
this completes the proof.
\end{proof}

\subsection{The non-integrable case $0<p<2$}

\begin{lemma}
\label{lem:LG-RV}
Under (H1) with $c=2\sqrt{r_+}$, let
\begin{equation}\label{q(z)-Eq}
r_+-r(z)=z^{-p}L(z), \,\,\,\,0<p<2,
\end{equation}
and $L(z)$ is a positive, slowly varying function defined by \eqref{SVFLP} and \eqref{SVFL}. Assume further that
\begin{equation}\label{L(z)-Eq}
L(\cdot)\in C^1([z_0,+\infty))
\,\,\,\,and\,\,\,\,
\frac{zL'(z)}{L(z)}\to 0,
\,\,\text{as }z\to+\infty.
\end{equation}
Then \eqref{linear-EQ}
admits fundamental solutions $v_+(z)$ and $v_-(z)$
such that
\begin{equation}\label{vpm-Eq}
v_\pm(z)
=
\Big(\frac{z^p}{L(z)}\Big)^{\frac{1}{4}}
\exp\left(
\pm\int_{z_0}^{z}s^{-\frac{p}{2}}\sqrt{L(s)}\, {\rm d}s
\right)
(1+o(1)),
\quad\text{as }z\to+\infty,
\end{equation}
which can be normalized such that
$
W:=v_+(z)v_-'(z)-v_+'(z)v_-(z)\equiv-2.
$
\end{lemma}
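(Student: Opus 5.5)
We prove this with the Liouville--Green transformation, reducing the equation to a Volterra integral equation whose kernel is absolutely integrable at $+\infty$. Write $q(z)=z^{-p}L(z)$ and set
$$\xi(z):=S(z)=\int_{z_0}^z\sqrt{q(s)}\,{\rm d}s,\qquad v(z)=q(z)^{-1/4}w(\xi).$$
A direct computation turns \eqref{linear-EQ} into
$$w_{\xi\xi}-w=-F\,w,\qquad F:=q^{-3/4}\frac{{\rm d}^2}{{\rm d}z^2}\big(q^{-1/4}\big).$$
Because $p<2$ and $L$ is slowly varying, $\xi(z)\to+\infty$; by Karamata, $\xi(z)\sim \frac{2}{2-p}z^{1-p/2}\sqrt{L(z)}$. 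The standard Liouville--Green error-control condition is that the variation
$$\mathcal V:=\int_{z_0}^{\infty}\Big|q^{-1/4}\big(q^{-1/4}\big)''\Big|\,{\rm d}z$$
be finite. Then Olver's theorem gives solutions $w_\pm=e^{\pm\xi}(1+\varepsilon_\pm)$ with $|\varepsilon_\pm|\le e^{\mathcal V}-1$ uniformly, and $\varepsilon_\pm(z)\to0$ as $z\to+\infty$ provided the variation is taken over $[z,\infty)$ for $\varepsilon_-$ (recessive) and appropriately for $\varepsilon_+$ (dominant).

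\textbf{Step 1: bound the error control function.} The main obstacle is that we only have $L\in C^1$, whereas Olver's $\mathcal V$ involves $q''$. To avoid second derivatives, I would integrate by parts the term containing $q''$, or equivalently use the first-order system formulation. Writing $\theta=(q^{-1/4})'/q^{-1/4}=-q'/(4q)$, the relevant quantity is
$$q^{-1/4}\big(q^{-1/4}\big)''=q^{-1/2}\big(\theta'+\theta^2\big).$$
The term $\theta'$ is the one needing care. I will instead work with the system for $(v,v')$ in the basis
$$\big(q^{-1/4}e^{\pm\xi},\ (q^{-1/4}e^{\pm\xi})'\big)$$
(variation of parameters with approximate solutions). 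This produces a perturbation matrix whose entries are of size $|q'|/q$ only, multiplied by oscillation-free factors, and one can diagonalize to leading order.

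Now $zq'/q=-p+zL'/L\to -p$, so $|q'|/q\le C/z$. This is not integrable in $z$, but in the $\xi$-variable ${\rm d}z=q^{-1/2}{\rm d}\xi$, and the coupling coefficient actually appearing after the diagonalizing change is
$$\frac{q'}{4q}\cdot\frac{1}{\sqrt q}\,\sqrt q\,{\rm d}\xi\text{-scaled}.$$
More precisely, the off-diagonal entries involve $q'/(4q)$ against ${\rm d}z$ but carry the factor $e^{\mp2\xi}$. In the Levinson-type argument, the diagonal term $-q'/(4q)$ is exactly absorbed into the prefactor $q^{-1/4}$. The off-diagonal terms are handled by the dichotomy $e^{\pm 2(\xi(z)-\xi(s))}$: the remaining error is $\int |q'/q|\,e^{-2(\xi(z)-\xi(s))}\,{\rm d}s$, which is $O\big(|q'|/q^{3/2}\big)=O\big(z^{p/2-1}L^{-1/2}\big)\to0$ since $p<2$. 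Thus I will apply Levinson's asymptotic theorem in the form of Eastham (or Hartman--Wintner), with the condition $q'/q^{3/2}\to0$ plus $L\in C^1$. This is where $zL'/L\to0$ and $p<2$ are used.

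\textbf{Step 2: build the two solutions.} Construct $v_-$ by a Volterra integral equation on $[z,\infty)$ and $v_+$ on $[z_0,z]$, each as a contraction for large $z_0$. This yields $v_\pm=q^{-1/4}e^{\pm S}(1+o(1))$, which is \eqref{vpm-Eq} since $q^{-1/4}=(z^p/L)^{1/4}$. The same argument gives the derivatives
$$v_\pm'=\pm q^{1/4}e^{\pm S}(1+o(1)).$$

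\textbf{Step 3: normalize.} Compute the Wronskian from these asymptotics:
$$v_+v_-'-v_+'v_-=(-1-1)(1+o(1))=-2+o(1).$$
Since the Wronskian is constant, it equals $-2$ exactly, after rescaling if necessary. The solutions are then extended to all of $\mathbb R$ as solutions of the linear ODE.
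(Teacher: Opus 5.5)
Your route is the paper's route: pass to the Liouville variable $t=S(z)$, work with the first-order system for $(q^{1/4}v,\,q^{-1/4}v')$ so that only $q'$ enters, through the coupling $a=\frac14 q'q^{-3/2}$, then apply a Levinson/Hartman--Wintner type asymptotic integration and read off the Wronskian. The gap is in the hypothesis you actually verify. You claim the theorem applies under $q'q^{-3/2}\to0$ (plus $L\in C^1$). Your only estimate is that the off-diagonal error $\int|q'/q|\,{\rm e}^{-2(S(z)-S(s))}\,{\rm d}s=O(|q'|q^{-3/2})$ tends to $0$, and that does not give the $(1+o(1))$ form.

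Write $v=q^{-1/4}(\alpha{\rm e}^{S}+\beta{\rm e}^{-S})$ and $v'=q^{1/4}(\alpha{\rm e}^{S}-\beta{\rm e}^{-S})$. Then $\alpha_z=\frac{q'}{4q}{\rm e}^{-2S}\beta$ and $\beta_z=\frac{q'}{4q}{\rm e}^{2S}\alpha$. Your estimate only shows $\beta{\rm e}^{-2S}\to0$. Feeding that back, $\alpha$ tends to a nonzero constant only if $\int^\infty\frac{|q'|}{q}\cdot\frac{|q'|}{q^{3/2}}\,{\rm d}z<\infty$, i.e.\ only if $a\in L^2({\rm d}t)$. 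With merely $a\to0$, the accumulated diagonal feedback $\int a\gamma\,{\rm d}t\approx\frac12\int a^2\,{\rm d}t$ may diverge. For instance, the frozen matrix $\begin{bmatrix}1&a\\ a&-1\end{bmatrix}$ has eigenvalues $\pm\sqrt{1+a^2}$, so a slowly varying $a\sim t^{-1/2}$ produces an extra factor of order $t^{1/2}$. In general you only get $v_\pm=q^{-1/4}{\rm e}^{\pm S+o(S)}$.

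None of the theorems you cite accepts $a\to0$ alone:
\begin{itemize}
\item Levinson's first theorem needs $a\in L^1({\rm d}t)$, which fails here, since $\int_{t_0}^{S(z)}|a|\,{\rm d}t=\frac14\int_{z_0}^{z}\frac{|q'|}{q}\,{\rm d}s\sim\frac p4\ln z$.
\item Its second form needs $a'\in L^1$, which involves $q''$, and you do not have $q''$.
\item Hartman--Wintner needs $a\in L^2$.
\end{itemize}
The contraction in your Step 2 rests on the same quantity: the composed kernel has norm at most $\frac12\int_{t_0}^\infty a^2\,{\rm d}t$.

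The missing step is exactly the paper's Step 1. One has $\int_{t_0}^\infty a^2\,{\rm d}t=\frac1{16}\int_{z_0}^\infty (q')^2q^{-5/2}\,{\rm d}z$. Since $q'/q=-p/z+L'/L=-(p+o(1))/z$, it follows that $(q')^2q^{-5/2}=O\big(z^{p/2-2}L(z)^{-1/2}\big)$. This is integrable by Karamata's theorem (or Potter's bound) precisely because $p/2-2<-1$. This, and not $a\to0$, is where $p<2$ and $zL'/L\to0$ are really needed.

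Once it is inserted, your argument goes through. Either your Levinson/Hartman--Wintner step, or the paper's more elementary Riccati argument for $\gamma=\eta/\xi$ (where $a\in L^2$ gives $\gamma\in L^2$ and hence $a\gamma\in L^1$), yields $v_+=q^{-1/4}{\rm e}^{S}(1+o(1))$ together with the derivative asymptotics. Your remaining steps then hold: the recessive solution, the Wronskian $-2+o(1)$, and hence $W\equiv-2$.
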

\begin{proof}
{\rm The proof of Lemma \ref{lem:LG-RV} is left in the Appendix B.}
\end{proof}

\begin{remark}
A standard formulation of the classical Liouville-Green approximation theorem assumes that
$q(z)\in C^2([z_0,+\infty))$ together with the integrability condition
$$
\int_{z_0}^{+\infty}\left|\frac{5(q')^2}{16 q^{5/2}}-\frac{q''}{4 q^{3/2}}\right|{\rm d}z<+\infty,
$$
which can be found in \cite[Definition 12.2]{HK2021}.
In Lemma \ref{lem:LG-RV}, we only assume $q(\cdot)\in C^1([z_0,+\infty))$ and $zL'(z)/L(z)\to 0$ as $z\to+\infty$, the same asymptotic fundamental solutions are obtained under this weaker regularity assumption. This is possible because the leading-order behavior of the solutions is determined by the decay rate of $q$, and the additional differentiability required in the standard approach is not essential.
\end{remark}

In the following,
we will define a function $Y(z)$ and apply Green's function representation to establish a priori estimates and
asymptotic behavior for the solutions.
Define
\begin{equation}\label{Weight-EQ}
Y(z):=\frac{y(z)}{v_+(z)},
\end{equation}
which with $W=-2$ that \eqref{Green-F} becomes
\begin{equation}\label{Green-p}
\begin{aligned}
Y(z)
&=
A+B\frac{v_-(z)}{v_+(z)}
+\frac12
\int_{z_0}^z
{\rm e}^{-\lambda s}Y^2(s)v_-(s)v^2_+(s)\,{\rm d}s
-\frac12\frac{v_-(z)}{v_+(z)}
\int_{z_0}^z
{\rm e}^{-\lambda s}Y^2(s)v^3_+(s)\, {\rm d}s.
\end{aligned}
\end{equation}
with $v_+(z)$ and $v_-(z)$ defined by \eqref{vpm-Eq} and $\lambda=c/2$.

\begin{lemma}\label{lemma-Yz}
Under (H1) with $c=2\sqrt{r_+}$, there exist $z_0\gg1$ such that
\begin{equation*}
K(s):=
{\rm e}^{-\lambda s}\big(q(s)\big)^{-\frac34}{\rm e}^{S(s)}
\in L^1([z_0,+\infty)),
\end{equation*}
where $q(s)$ and $S(s)$ are defined by \eqref{r-TRANS} and  \eqref{phase} respectively.
Moreover,
\begin{equation}\label{ineq-est1}
v_-(s)v^2_+(s)
\le C\big(q(s)\big)^{-\frac34} {\rm e}^{S(s)}\,\,\,\,and\,\,\,\,
\frac{v_-(z)}{v_+(z)}v^3_+(s)
\le C\big(q(s)\big)^{-\frac34} {\rm e}^{S(s)},~~z\geq z_0,
\end{equation}
with $v_+(\cdot)$ and $v_-(\cdot)$ defined by \eqref{vpm-Eq}.
\end{lemma}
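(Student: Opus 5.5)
Two separate things are claimed: $K\in L^1$, and the pointwise bounds \eqref{ineq-est1}. Both rest on the Liouville--Green asymptotics \eqref{vpm-Eq} of Lemma \ref{lem:LG-RV}, rewritten in the notation of \eqref{r-TRANS} and \eqref{phase}: since $q(z)=z^{-p}L(z)$, we have $q^{-1/4}=(z^p/L(z))^{1/4}$ and $S(z)=\int_{z_0}^z s^{-p/2}\sqrt{L(s)}\,{\rm d}s$. Hence, after enlarging $z_0$, there is $C>0$ with
\begin{equation*}
C^{-1}q(z)^{-\frac14}{\rm e}^{\pm S(z)}\le v_\pm(z)\le C\,q(z)^{-\frac14}{\rm e}^{\pm S(z)},\qquad z\ge z_0 .
\end{equation*}
In particular $v_\pm>0$ on $[z_0,+\infty)$.

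For the first bound, multiplying these estimates gives
\begin{equation*}
v_-(s)v_+^2(s)\le C^3 q(s)^{-\frac34}{\rm e}^{-S(s)+2S(s)}=C^3q(s)^{-\frac34}{\rm e}^{S(s)} .
\end{equation*}
For the second bound, I read the variable as $z=s$. This is how the term $\frac{v_-(z)}{v_+(z)}v_+^3(s)$ would be handled after the factor $v_-(z)/v_+(z)$ is absorbed. The same estimates then give $v_-v_+^2\le Cq^{-3/4}{\rm e}^{S}$ once more.

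If the bound is instead intended for $z\ge s$ with $s$ as the integration variable, I would use that $v_-/v_+$ is eventually nonincreasing. Indeed, $(v_-/v_+)'=W/v_+^2=-2/v_+^2<0$. So $\frac{v_-(z)}{v_+(z)}\le\frac{v_-(s)}{v_+(s)}$ for $z\ge s\ge z_0$, which gives
\begin{equation*}
\frac{v_-(z)}{v_+(z)}v_+^3(s)\le v_-(s)v_+^2(s)\le Cq(s)^{-\frac34}{\rm e}^{S(s)} .
\end{equation*}
This covers the integral in \eqref{Green-p} directly.

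For integrability, I compare $K$ with an exponential. Since $0<p<2$, we have $s^{-p/2}\sqrt{L(s)}\to0$, because $L$ is slowly varying and Potter's bound gives $L(s)=o(s^{\delta})$ for every $\delta>0$. Consequently $S(s)=o(s)$, so ${\rm e}^{S(s)}\le {\rm e}^{\lambda s/4}$ for $s$ large. Also $q(s)^{-3/4}=s^{3p/4}L(s)^{-3/4}$, and $L^{-1}$ is slowly varying as well, so $q(s)^{-3/4}=O(s^{3p/4+1})\le {\rm e}^{\lambda s/4}$ for large $s$. Therefore $K(s)\le {\rm e}^{-\lambda s/2}$, which is integrable.

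The only genuine subtlety is the meaning of the variables in the second inequality; the monotonicity argument resolves it. The rest is bookkeeping with \eqref{vpm-Eq} and the sub-polynomial growth of slowly varying functions.
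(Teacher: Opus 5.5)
Your proof is correct and follows the paper's argument. You obtain two-sided bounds from \eqref{vpm-Eq}, then use Potter's bound together with $S(s)=o(s)$ to dominate $K$ by an integrable exponential. The one small difference is in the second estimate of \eqref{ineq-est1} for $z_0\le s\le z$: you use that $v_-/v_+$ is decreasing, since $(v_-/v_+)'=W/v_+^2=-2/v_+^2<0$, whereas the paper bounds $v_-(z)/v_+(z)\le C{\rm e}^{-2S(z)}$ and uses monotonicity of $S$ to get $-2S(z)+3S(s)\le S(s)$; both routes work equally well.
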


\begin{proof}
By virtue of \eqref{vpm-Eq}, there exists some constant $C>0$ such that
$$
v_-(s)v^2_+(s)
\le C\big(q(s)\big)^{-\frac34} {\rm e}^{S(s)},
~~
\frac{v_-(z)}{v_+(z)}
\le C {\rm e}^{-2S(z)}
~~ {\rm and}~~
v^3_+(s)\le C\big(q(s)\big)^{-\frac34} {\rm e}^{3S(s)}.
$$
Since $S(\cdot)$ is increasing and $s\le z$, then
$
-2S(z)+3S(s)\le S(s),
$
which proves \eqref{ineq-est1}.
By Potter's bound for slowly varying functions
\cite[Theorem~1.5.6]{BGT1987},
for every $\delta>0$ there exists $C_\delta>0$ such that $L(s)^{-3/4}\leq C_\delta s^{\delta}$, which by \eqref{q(z)-Eq} yields that
$$
\big(q(s)\big)^{-\frac34}=s^{\frac 3p4}L(s)^{-\frac 34}\le C_\delta s^{\frac {3p}{4}+\delta},~~s\geq z_0.
$$
Since  $S(s)=o(s)$, we can find
$z_1$ large enough such that $S(s)\le \frac{\lambda}{2}s$ for $s\ge z_0$.
Hence
$$
K(s)
\le
\widetilde{C}_\delta {\rm e}^{-\frac {\lambda s}{2}} s^{\frac{3p}{4}+\varepsilon},~~s\ge z_0,
$$
for some $\widetilde{C}_\delta>0$, which is integrable on $[z_0,+\infty)$.
\end{proof}

\begin{theorem}\label{lem:Y-bounded}
Under (H1) with $c=2\sqrt{r_+}$, let
$$
q(z)=z^{-p}L(z),~\,~0<p<2,
$$
where $L(z)>0$ is a positive, slowly varying function defined by \eqref{SVFLP} and \eqref{SVFL}.
Further assume  that
$
L(\cdot)\in C^1([z_0,+\infty))
\,\,\,\,and\,\,\,\,
\frac{zL'(z)}{L(z)}\to 0
\,\,\text{as }z\to+\infty.
$
Then there exists a finite constant $C_+>0$ such that
\begin{equation}
\frac{y(z)}{v_+(z)}\to C_+,~~as~z\to+\infty.
\end{equation}
\end{theorem}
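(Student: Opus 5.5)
We plan to work directly with the representation \eqref{Green-p} for $Y=y/v_+$. We will first show that $Y$ is bounded on $[z_0,+\infty)$, and then show that each term on the right-hand side of \eqref{Green-p} has a finite limit.

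\textbf{Step 1: boundedness of $Y$.} This needs only a crude decay estimate and no bootstrap. By Lemma \ref{lemma2.12new}, $y(z)={\rm e}^{\lambda z}\phi(z)\le K{\rm e}^{(\lambda-\eta)z}$ with $\eta\in(0,\lambda)$, and we may take $\eta$ as close to $\lambda$ as we like. Since $v_+(z)\ge c_0 q(z)^{-1/4}\ge c_0$ for large $z$ by \eqref{vpm-Eq}, this gives $Y(z)\le K'{\rm e}^{(\lambda-\eta)z}$. This growth is too weak to close the argument directly, so we return to the equation.

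From \eqref{trans-EQ}, $y''=(q+{\rm e}^{-\lambda z}y)y\ge0$, so $y$ is convex and positive on $[z_0,\infty)$. We compare $y$ with $v_+$ on a tail $[z_1,\infty)$ where ${\rm e}^{-\lambda z}y=\phi\le\varepsilon$. There
\[
y''\le (q+\phi)y,
\]
with $\phi\le K{\rm e}^{-\eta z}$, and this perturbation is exponentially small relative to $q=z^{-p}L$. We then build a supersolution of the form $\bar v=v_+\exp\big(\int_{z_1}^z g\big)$ with $\int^\infty g<\infty$. Alternatively, we can argue through \eqref{Green-p} itself. Set $M(z)=\sup_{[z_0,z]}Y$. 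Using \eqref{ineq-est1}, the two integral terms are bounded by
\[
C\int_{z_0}^z {\rm e}^{-\lambda s}Y^2(s)\,q^{-3/4}{\rm e}^{S(s)}\,{\rm d}s .
\]
Write ${\rm e}^{-\lambda s}Y^2v_+^{\,\cdot}$ as $\phi\cdot Y\cdot(\ldots)$, keeping one factor $Y$ and absorbing the other through $\phi=o({\rm e}^{-\eta s})$, since ${\rm e}^{-\lambda s}Y(s)=\phi(s)/v_+(s)$. This turns the bound into
\[
Y(z)\le C+C\int_{z_0}^z \phi(s)\,q(s)^{-1/2}\,Y(s)\,{\rm d}s,
\]
where $\phi q^{-1/2}\le Ks^{p/2+\delta}{\rm e}^{-\eta s}$ is integrable by Potter's bound, as in Lemma \ref{lemma-Yz}. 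Gronwall's inequality then gives $\sup Y<\infty$. This linearization of the quadratic term, using $\phi$ itself, is the key trick. It avoids any smallness of $Y$ and works because $v_-v_+=O(q^{-1/2})$.

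\textbf{Step 2: the limit.} With $Y$ bounded, Lemma \ref{lemma-Yz} shows that ${\rm e}^{-\lambda s}Y^2v_-v_+^2\le C\|Y\|_\infty^2K(s)\in L^1$. Hence the third term in \eqref{Green-p} converges to a finite limit. The term $Bv_-/v_+=O({\rm e}^{-2S(z)})$ tends to $0$, because $S(z)\to\infty$; this follows from the non-integrability $\int^\infty s^{-p/2}\sqrt{L}=\infty$, which holds for $p<2$. For the last term we use the sharper bound $\frac{v_-(z)}{v_+(z)}v_+^3(s)\le C q(s)^{-3/4}{\rm e}^{3S(s)-2S(z)}$ and split the integral at $z/2$ (or at a large fixed $R$). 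On $[R,z]$ the integrand is dominated by $CK(s)$, so this part is at most $C\int_R^\infty K$, which is small. On $[z_0,R]$ the factor ${\rm e}^{-2S(z)}\to0$. By dominated convergence this term therefore tends to $0$. Consequently $Y(z)\to C_+:=A+\frac12\int_{z_0}^\infty {\rm e}^{-\lambda s}Y^2v_-v_+^2\,{\rm d}s$.

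\textbf{Step 3: positivity of $C_+$.} This is the main remaining obstacle, since the limit could a priori vanish. We rule this out by convexity. Because $y''\ge0$ and $y>0$ with $y\to\infty$, we have $y(z)\ge c_1 z$ for large $z$. Comparison with $v_+$ gives more. In the Liouville--Green variables, $v_-$ is the recessive solution. If $C_+=0$, then $y=o(v_+)$, and the representation would force $y$ to be asymptotic to a multiple of $v_-$ plus an exponentially small correction, so that $y\to0$. This contradicts $y\ge c_1z$. To make this precise, we rewrite \eqref{Green-F} with integrals taken from $z$ to $\infty$, as in \eqref{tail-1}, using Step 1's bound together with $\phi$'s exponential decay, and deduce $y=C_+v_+ +O(v_-)+o(1)$. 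Hence $C_+>0$.
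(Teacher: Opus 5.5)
Steps 1 and 2 are sound, but Step 3 has a circular step that needs a small repair.

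\textbf{The gap in Step 3.} You write ``because $y''\ge0$ and $y>0$ with $y\to\infty$, we have $y(z)\ge c_1z$'', but nothing earlier shows $y\to\infty$. Once you have the tail decomposition $y=C_+v_++bv_-+o(1)$, the statement $y\to\infty$ is equivalent to $C_+>0$, which is exactly what you are proving.

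Convexity and positivity on a half-line $[z_0,+\infty)$ cannot settle this. For instance, $v_-$ itself is positive, convex and tends to $0$. The missing ingredient is information from $z\to-\infty$, which your proposal never uses.

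Since $q>0$ and $\phi>0$ on all of $\mathbb{R}$, we have $y''=(q+\phi)y>0$ on $\mathbb{R}$. Also $y(z)={\rm e}^{\lambda z}\phi(z)\to0$ as $z\to-\infty$, because $\phi(-\infty)=r_-$. If $y'(z_*)\le0$ for some $z_*$, convexity would give $y(z)\ge y(z_*)>0$ for all $z<z_*$, a contradiction. Hence $y'(z_*)>0$, and $y(z)\ge y(z_*)+y'(z_*)(z-z_*)$ for $z\ge z_*$, which is the linear lower bound you want.

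The paper uses the same input in a different way. If $C_+=0$, then $y\to0$ at both ends, so $y$ has a positive interior maximum where $y''\le0$, contradicting $y''=(q+\phi)y>0$. With either fix, your tail decomposition closes the argument, since $v_-\to0$: $S(z)\sim z^{1-p/2}\sqrt{L(z)}/(1-p/2)$ dominates $\ln z$. In fact, the decomposition justifies the implication ``$C_+=0\Rightarrow y\to0$'' that the paper only asserts.

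\textbf{Comparison with the paper on boundedness.} Step 1 takes a genuinely different and cleaner route. The paper closes the quadratic inequality $|Y|\le C_0+C_1\int_{z_0}^zK|Y|^2$ by a Bihari-type argument. That needs the smallness condition $C_1\int_{z_0}^\infty K\le 1/(2C_0)$. But $C_0$ must dominate $|A|+|B|\sup v_-/v_+$, and $A,B$ depend on the base point $z_0$. So making $\int_{z_0}^\infty K$ small by enlarging $z_0$ requires some care about how these constants grow.

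Your identity ${\rm e}^{-\lambda s}Y(s)v_+(s)=\phi(s)$ turns the quadratic integrands into $\phi\,Y\,v_-v_+$ and $\phi\,Y\,v_+^2$. The last term of \eqref{Green-p} is then either nonpositive, since $v_\pm>0$ and $Y\ge0$, or bounded by the same kernel, since $\frac{v_-(z)}{v_+(z)}v_+^2(s)\le Cq(s)^{-1/2}$ for $s\le z$. This gives a linear inequality $Y\le C+C\int_{z_0}^z\phi q^{-1/2}Y$ with $\phi q^{-1/2}\in L^1$. Gronwall then closes it for any fixed $z_0$, with no smallness needed.

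In effect, you treat \eqref{trans-EQ} as the linear equation $y''=(q+\phi)y$ with a known, exponentially small perturbation of the potential. That is the more robust viewpoint. Your Step 2 also checks explicitly, by splitting at a large $R$, that the last term of \eqref{Green-p} tends to $0$, a point the paper passes over.
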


\begin{proof}
It follows from Lemma \ref{lemma-Yz} that $K(\cdot)\in L^1([z_0,+\infty))$. Thus, we have
\begin{equation}\label{abg}
C_1\int_{z_0}^\infty K(s)\, {\rm d}s
\le
\frac{1}{2C_0},
\end{equation}
for some constant $C_0>0$.
Define
$$
U(z)
:=
C_0+
C_1\int_{z_0}^z K(s)|Y(s)|^2\, {\rm d}s,
\quad z\ge z_0.
$$
Further, Lemma \ref{lemma-Yz} implies that $|Y(z)|\le U(z)$ for $z\ge z_0$.
Clearly, $U(z)\ge C_0>0$.
Moreover, $U$ is continuous and, for every $z\ge z_0$,
$$
U'(z)
=
C_1K(z)|Y(z)|^2
\le
C_1K(z)U^2(z).
$$
Therefore
\begin{equation}\label{ine_EQ0}
\left(\frac{1}{U(z)}\right)'
=
-\frac{U'(z)}{U^2(z)}
\ge
-C_1K(z), ~~z\ge z_0.
\end{equation}
Integrating \eqref{ine_EQ0} from $z_0$ to
$z$ gives
$$
\frac{1}{U(z)}
\ge
\frac{1}{U(z_0)}
-
C_1\int_{z_0}^z K(s)\, {\rm d}s.
$$
Since $U(z_0)=C_0$, it follows from \eqref{abg} that
$
U(z)\le 2C_0
$  for $z\ge z_0$, which means $|Y(z)| \le 2C_0$ for $z\ge z_0$.
By \eqref{ineq-est1} and boundedness of $Y$, we have
$$
\int_{z_0}^{+\infty}
{\rm e}^{-\lambda s}Y^2(s)v_-(s)v^2_+(s)\, {\rm d}s<+\infty.
$$
Further define
$$
C_+
=
A-\frac1 2
\int_{z_0}^{+\infty}
{\rm e}^{-\lambda s}Y^2(s)v_-(s)v^2_+(s)\, {\rm d}s,
$$
with $A>0$ defined by \eqref{Green-F},
which with \eqref{Green-p} implies that $Y(z)\to C_+$ as $z\to+\infty$ for $C_+\ge0$.

It remains to show that $C_+>0$.
By contradiction, suppose that $C_+ = 0$. It follows from \eqref{vpm-Eq} that $v_-(z) \to 0$ , which means
$y(z) \to 0 $ as $z \to +\infty$.
On the other hand,
$$
y(z) = {\rm e}^{\lambda z} \phi(z) \to 0, \quad \text{as } z \to -\infty,
$$
since $\phi(-\infty) = r_- > 0$. Moreover $y > 0$ on $\mathbb{R}$. Hence $y$ attains a positive global maximum at some $z_m \in \mathbb{R}$. Consequently,
$$
y'(z_m) = 0, \quad y''(z_m) \leq 0.
$$
Since $q \geq 0$, $\phi > 0$, and $y > 0$,
$$
y''(z_m) = (q(z_m) + \phi(z_m))y(z_m) > 0,
$$
which contradicts to \eqref{trans-EQ}. Thus $C_+ \neq 0$. We have
$
C_+ > 0.
$
\end{proof}

{\bf Proof of Theorem \ref{thmfirst1}.}
Since
$
y(z)={\rm e}^{\frac{c}{2}z}\phi(z),
$
Case (i) of Theorem~\ref{thmfirst1} follows immediately from Theorem~\ref{thm:p>2-nonlinear}. Furthermore, combining \eqref{vpm-Eq} with Theorem~\ref{lem:Y-bounded}, we prove case~(ii). This completes the proof.

\subsection{The critical case $p=2$}
In this sub-section, using \eqref{qeqlv},  we rewrite \eqref{linear-EQ} as
\begin{equation} \label{eq:p2-linear}
    v''(z)-\frac{L(z)}{z^2}v(z)=0,
    \qquad z\ge z_1.
\end{equation}
The critical structure is closely related to the perturbed
Euler equations. Regularly varying solutions of such equations,
including generalized regular variation on the logarithmic scale,
have been extensively studied; see,
\cite{KM2010,Maric2000}.
Our purpose here is different: we need explicit asymptotic formulas for fundamental solutions, since these decay rates will be
used later in the nonlinear variation-of-constants representation.

\begin{lemma}\label{lem:3.1}
Under (H1) with $c=2\sqrt{r_+}$, let $L(\cdot)\in C^1([z_0,+\infty))$ is a positive, slowly varying function defined by \eqref{SVFLP} and \eqref{SVFL}.
Assume that $L(z)$ is monotone for sufficiently large $z$, and that
$
\frac{zL'(z)}{L(z)}\to0
$ as $z\to+\infty$.
Then \eqref{eq:p2-linear} admits fundamental solutions $v_+$ and $v_-$ satisfying
\begin{equation}\label{eq:p2-unified}
\begin{split}
    v_\pm(z)
    &=
    \sqrt{z}
    \left(\frac14+L(z)\right)^{-\frac 14}
    \exp\left\{
        \pm
        \int_{z_0}^{z} s^{-1}\sqrt{\frac14+L(s)}\,{\rm d}s
    \right\}(1+o(1)),~~as~~z\to+\infty,
\end{split}
\end{equation}
which can be normalized such that
$
W:=v_+(z)v_-'(z)-v_+'(z)v_-(z)\equiv-2.
$
\end{lemma}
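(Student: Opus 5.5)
The plan is to reduce \eqref{eq:p2-linear} to a Liouville--Green problem whose potential is bounded away from zero, by means of the Euler substitution
$$
t=\ln z,\qquad v(z)=\sqrt{z}\,w(t).
$$
A direct computation gives $v''(z)=z^{-3/2}\bigl(w_{tt}-\tfrac14 w\bigr)$, so \eqref{eq:p2-linear} becomes
$$
w_{tt}=Q(t)\,w,\qquad Q(t):=\tfrac14+L({\rm e}^t).
$$
This $Q$ has three useful properties. It satisfies $Q\ge \tfrac14$. It is $C^1$ with $Q_t(t)=zL'(z)$, so $Q_t/Q\to0$ because $zL'/L\to0$ and $L/(\tfrac14+L)\le1$. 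It is monotone for large $t$, since $L$ is. Finally, $\int^{t}\sqrt{Q(\tau)}\,{\rm d}\tau=\int_{z_0}^{z}s^{-1}\sqrt{\tfrac14+L(s)}\,{\rm d}s$ and $Q^{-1/4}=(\tfrac14+L)^{-1/4}$. Hence it suffices to prove that $w_{tt}=Qw$ has solutions
$$
w_\pm=Q^{-1/4}{\rm e}^{\pm S(t)}(1+o(1)),\qquad S(t):=\int_{t_0}^t\sqrt{Q}.
$$

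Since $Q$ is only $C^1$, I will not use the classical LG theorem, which needs $Q''$. Instead I will use the first-order variation-of-parameters form, in the spirit of Appendix B. I seek solutions of the form
$$
w=Q^{-1/4}\bigl(\alpha{\rm e}^{S}+\beta{\rm e}^{-S}\bigr),\qquad w_t=Q^{1/4}\bigl(\alpha{\rm e}^{S}-\beta{\rm e}^{-S}\bigr).
$$
Imposing both identities and substituting into the equation yields a linear system involving only $Q_t$:
$$
\alpha_t=\frac{Q_t}{4Q}\,\beta\,{\rm e}^{-2S},\qquad \beta_t=\frac{Q_t}{4Q}\,\alpha\,{\rm e}^{2S}.
$$
For the dominant solution I pose $\alpha(\infty)=1$ and $\beta(t_0)=0$. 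Then $\tilde\beta:=\beta{\rm e}^{-2S}$ solves $\tilde\beta_t=-2\sqrt Q\,\tilde\beta+\tfrac{Q_t}{4Q}\alpha$. Because $2\sqrt Q\ge1$, this gives $\tilde\beta(t)=\int_{t_0}^t{\rm e}^{-2(S(t)-S(\tau))}\tfrac{Q_t}{4Q}(\tau)\alpha(\tau)\,{\rm d}\tau$. I expect the refined bound $|\tilde\beta|\lesssim \sup|\alpha|\cdot\bigl(|Q_t|/Q^{3/2}\bigr)$ up to an exponentially small error, using that $Q_t/Q$ varies slowly on the unit scale. Consequently
$$
|\alpha_t|\lesssim \frac{|Q_t|^2}{Q^{5/2}}+\text{(exponentially small)},
$$
and a Gronwall or contraction argument on a tail $[T,\infty)$ then gives $\alpha\to1$.

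The main obstacle is showing $\int^\infty |Q_t|^2Q^{-5/2}\,{\rm d}t<\infty$, because $Q_t/Q=o(1)$ alone does not give it; this is where monotonicity of $L$ enters. For large $t$, $Q_t$ has a fixed sign, so
$$
\int_T^\infty\frac{|Q_t|^2}{Q^{5/2}}\le \sup_{\tau\ge T}\frac{|Q_t|}{Q}\int_T^\infty\frac{|Q_t|}{Q^{3/2}}=\sup_{\tau\ge T}\frac{|Q_t|}{Q}\cdot 2\bigl|Q(T)^{-1/2}-Q(\infty)^{-1/2}\bigr|<\infty.
$$
This holds whether $L\to0$, $L\to\ell$, or $L\to\infty$, since $Q^{-1/2}\in(0,2]$. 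I would take some care making the bound on $\tilde\beta$ rigorous: the crude bound $\sup|Q_t/Q|$ yields only $o(1)$ per unit length, which is not enough. If the slowly-varying localization proves awkward, I will instead integrate by parts in the $\tilde\beta$ formula using the monotone, bounded-variation quantity $Q^{-1/2}$.

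For the subdominant solution I set $w_-:=2w_+\int_t^\infty w_+^{-2}$, which is well defined because $w_+^{-2}\sim Q^{1/2}{\rm e}^{-2S}$ decays exponentially. Then $W_t(w_+,w_-)=-2$ automatically. Moreover, $\int_t^\infty Q^{1/2}{\rm e}^{-2S}(1+o(1))=\tfrac12{\rm e}^{-2S(t)}(1+o(1))$, which follows by L'Hôpital since $\tfrac{d}{dt}{\rm e}^{-2S}=-2\sqrt Q\,{\rm e}^{-2S}$. This gives $w_-=Q^{-1/4}{\rm e}^{-S}(1+o(1))$. Returning to $v_\pm(z)=\sqrt z\,w_\pm(\ln z)$, the Wronskian is unchanged: $v_+v_-'-v_+'v_-=z\cdot z^{-1}(w_+w_{-,t}-w_{+,t}w_-)=-2$. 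This yields \eqref{eq:p2-unified}.
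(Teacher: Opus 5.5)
Up to the last step your argument is the paper's Appendix C in different notation. Your $v=\sqrt z\,w(\ln z)$ is the paper's $U(t)={\rm e}^{-t/2}v({\rm e}^t)$, and your $\alpha{\rm e}^{S}$ and $\beta{\rm e}^{-S}$ are exactly $Z_+$ and $Z_-$. So in the Liouville variable $\sigma=S(t)$ your system reads $\alpha_\sigma=\kappa\tilde\beta$, $\tilde\beta_\sigma=-2\tilde\beta+\kappa\alpha$, with $\kappa=Q_t/(4Q^{3/2})=\rho'/(2\rho^2)$. Your monotonicity bound for $\int^\infty Q_t^2Q^{-5/2}\,{\rm d}t$ is Step 1 without the case split. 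The reduction of order for $w_-$ and the Wronskian computation are fine.

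The gap is in getting from this integrability to $\alpha\to1$. The pointwise bound $|\tilde\beta(t)|\lesssim\sup|\alpha|\,|Q_t(t)|/Q(t)^{3/2}$ does not follow from the hypotheses and is false in general. A $C^1$, monotone $L$ with $zL'/L\to0$ may have $L'$ equal to a train of narrow bumps. At a zero of $Q_t$ just after a bump, the right-hand side vanishes while $\tilde\beta(t)>0$, since $\alpha\approx1$ and $Q_\tau\ge0$ on the bump. Nothing makes $Q_t/Q$ slowly varying in the relative sense you need, and the integration-by-parts fallback is only sketched.

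No pointwise control of $Q_t$ is needed. The paper closes this step with the $L^2$ Hartman--Wintner theorem. By hand it goes as follows:
\begin{itemize}
\item $\|\kappa\|^2_{L^2({\rm d}\sigma)}=\frac1{16}\int Q_t^2Q^{-5/2}\,{\rm d}t<\infty$.
\item Young's inequality for the kernel ${\rm e}^{-2(\sigma-s)}$ gives $\|\tilde\beta\|_{L^2(\sigma_1,\infty)}\le\frac12\sup|\alpha|\,\|\kappa\|_{L^2(\sigma_1,\infty)}$.
\item Cauchy--Schwarz then gives $\int_{\sigma_1}^\infty|\alpha_\sigma|\,{\rm d}\sigma\le\frac12\sup|\alpha|\,\|\kappa\|^2_{L^2(\sigma_1,\infty)}$.
\end{itemize}
This yields a contraction on a tail; it is the same device the paper uses for $\gamma$ in Appendix B.

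Alternatively, your crude bound already suffices once you keep the mass of the kernel. You dropped that mass when you concluded the crude bound gives only $o(1)$. Set $\bar\varepsilon=\sup_{\tau\ge T}|Q_\tau|/Q$. Then $Q$ is nearly constant on the kernel's effective support, so $\int_T^t{\rm e}^{-2(S(t)-S(\tau))}\,{\rm d}\tau\le C\,Q(t)^{-1/2}$. Hence $|\tilde\beta(t)|\le C\bar\varepsilon\sup|\alpha|\,Q(t)^{-1/2}$ and $|\alpha_t|\le C\bar\varepsilon\sup|\alpha|\,|Q_t|\,Q^{-3/2}$. Finally, $\int_T^\infty|Q_t|\,Q^{-3/2}\,{\rm d}t\le4$ is exactly the monotonicity computation you already did.

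In either version, $\tilde\beta\to0$ follows from $Q_t/Q\to0$. This gives $w_+=Q^{-1/4}{\rm e}^{S}(\alpha+\tilde\beta)=Q^{-1/4}{\rm e}^{S}(1+o(1))$.
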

\begin{proof}
{\rm The proof of Lemma \ref{lem:3.1} is left in the Appendix C.}
\end{proof}

\begin{remark}
In the proof of Lemma \ref{lem:3.1}, $C^1$-regularity is sufficient for the asymptotics.
The proof is based on the classical asymptotic-integration argument of Hartman--Wintner type;
see \cite{HW1955}. Furthermore, The monotonicity assumption is used only in proving the
integrability condition \eqref{estimate-EQ}. Without
monotonicity, the condition
$
zL'(z)/L(z)\to0
$ as $z\to+\infty$
does not control the higher perturbation.
\end{remark}

%\subsubsection{Refined asymptotic classification}

We next consider more explicit asymptotics of
Lemma~\ref{lem:3.1}.
Since $L(z)$ is monotone for $z\gg1$, the limit
$$
    L_{\infty}:=\lim_{z\to+\infty}L(z)
$$
exists in $[0,+\infty]$. In the
following, $C_\pm>0$ denotes a constant which may change from one case to
another.

\begin{corollary}
\label{cor:p2-classification}
Under the assumptions of
Lemma~\ref{lem:3.1}, the following special cases hold.

\medskip
\noindent
\textnormal{\rm {\bf (i) $L(z)\to0$}} as $z\to+\infty$.
Then,
\begin{equation*}
    v_+(z)\sim C_+
    z {\rm e}^{B(z)},\qquad
    v_-(z)\sim C_-
    {\rm e}^{-B(z)},~~as~z\to+\infty,
\end{equation*}
with $ B(z) := \int_{z_0}^{z}\frac{\sqrt{1+4L(s)}-1 }{2s}\, {\rm d}s.$
More precisely:

\begin{enumerate}
\item[\textnormal{\rm {\bf (i-a)}}]
If
    $\int_{z_0}^{+\infty} s^{-1}L(s)\,{\rm d}s<+\infty$,
then the fundamental solutions satisfy
\begin{equation*}
    v_+(z)\sim C_+z,
    \qquad
    v_-(z)\sim C_-,~~{\rm as}~z\to+\infty.
\end{equation*}

\item[\textnormal{\rm {\bf (i-b)}}]
If
   $
    \int_{z_0}^{+\infty} s^{-1}L(s)\, {\rm d}s=+\infty,$
then $B(z)\sim\int_{z_0}^{z}s^{-1}L(s)\, {\rm d}s$ and
\begin{equation*}
    v_+(z)
    \sim C_+z {\rm e}^{B(z)},
    \qquad
    v_-(z)
    \sim C_-{\rm e}^{-B(z)},~~as~z\to+\infty.
\end{equation*}
\end{enumerate}
In addition, if
$
    \int_{z_0}^{+\infty}s^{-1}L^2(s)\, {\rm d}s<+\infty,
$
then
\begin{equation*}
    v_+(z)
    \sim
    C_+ z\exp\left\{
        \int_{z_0}^{z} s^{-1}L(s)\, {\rm d}s \right\},
    ~~
    v_-(z) \sim
    C_-\exp\left\{ -\int_{z_0}^{z} s^{-1}L(s)\, {\rm d}s\right\},~as~z\to+\infty.
\end{equation*}

\medskip
\noindent
\textnormal{\rm {\bf (ii) $L(z)\to\ell\in(0,+\infty)$}} as $z\to+\infty$.
Then
\begin{equation}\label{caseii}
    v_\pm(z)
    \sim
    C_\pm
    z^{\alpha_\pm}
    \exp\left\{\int_{z_0}^{z} s^{-1}\left( \sqrt{\frac14+L(s)}
        - \sqrt{\frac14+\ell}\right)\, {\rm d}s\right\},~~as~z\to+\infty,
\end{equation}
with
$
\alpha_\pm:=\frac{1\pm\sqrt{1+4\ell}}2.
$
In particular, if
$
    \int_{z_0}^{+\infty} s^{-1}|L(s)-\ell|\, {\rm d}s<+\infty,
$
then
\begin{equation*}
    v_\pm(z)
    \sim
    C_\pm z^{\alpha_\pm},~~as~z\to+\infty.
\end{equation*}

\medskip
\noindent
\textnormal{\rm {\bf (iii) $L(z)\to+\infty$}} as $z\to+\infty$.
Then
\begin{equation*}
\begin{split}
    v_\pm(z)
    &\sim C_\pm \left(\frac{z^2}{L(z)}\right)^{\frac{1}{4}}
    \exp\left\{
        \pm\int_{z_0}^{z} s^{-1}\sqrt{L(s)}\, {\rm d}s\pm D(z)
    \right\},~~as~z\to+\infty,
\end{split}
\end{equation*}
with $ D(z)
    :=\int_{z_0}^{z} s^{-1}\Big(
        \sqrt{\frac14+L(s)}-\sqrt{L(s)}\Big)\, {\rm d}s$.
Further if
$
    \int_{z_0}^{+\infty} \frac{{\rm d}s}{s\sqrt{L(s)}}<+\infty,
$
then
\begin{equation*}
    v_\pm(z)
    \sim C_\pm
   \left(\frac{z^2}{L(z)}\right)^{\frac{1}{4}} \exp\left\{
        \pm\int_{z_0}^{z} s^{-1}\sqrt{L(s)}\, {\rm d}s
    \right\},~~as~z\to+\infty.
\end{equation*}
\end{corollary}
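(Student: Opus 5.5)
The plan is to take the unified asymptotics \eqref{eq:p2-unified} of Lemma~\ref{lem:3.1} as the starting point and simplify the exponent
$$
\pm\int_{z_0}^{z}s^{-1}\sqrt{\tfrac14+L(s)}\,{\rm d}s
$$
and the prefactor $\sqrt z\,(\tfrac14+L(z))^{-1/4}$ in each regime of $L_\infty$. Any additive constant arising from splitting an integral, and any limit of the prefactor, is absorbed into the constants $C_\pm>0$. Every case reduces to an elementary expansion of $\sqrt{\tfrac14+L}$ together with an integrability check of the remainder against ${\rm d}s/s$.

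\emph{Case (i), $L\to0$.} I will write
$$
\sqrt{\tfrac14+L}=\tfrac12+\frac{\sqrt{1+4L}-1}{2}.
$$
Then
$$
\int_{z_0}^z s^{-1}\sqrt{\tfrac14+L}\,{\rm d}s=\tfrac12\ln(z/z_0)+B(z).
$$
The prefactor satisfies $(\tfrac14+L(z))^{-1/4}\to\sqrt2$. Hence $v_+\sim C_+\sqrt z\cdot\sqrt z\,{\rm e}^{B}=C_+z{\rm e}^{B}$, and $v_-\sim C_-\sqrt z\cdot z^{-1/2}{\rm e}^{-B}=C_-{\rm e}^{-B}$.

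For the subcases I will use
$$
0\le \frac{\sqrt{1+4L}-1}{2}=L-L^2+O(L^3)\le L .
$$
\begin{itemize}
\item In (i-a), $\int^{\infty}s^{-1}L<\infty$ gives that $B(z)$ converges, so $v_+\sim C_+z$ and $v_-\sim C_-$.
\item In (i-b), $B(z)/\int_{z_0}^z s^{-1}L\,{\rm d}s\to1$. This follows by L'H\^opital, or by a Ces\`aro-type argument, since the integrands have ratio tending to $1$ and the denominator diverges.
\item Under the extra hypothesis $\int s^{-1}L^2<\infty$, the difference $B(z)-\int_{z_0}^z s^{-1}L\,{\rm d}s$ has integrand $O(L^2/s)$. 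It therefore converges, and the stated form with $\int s^{-1}L$ in the exponent follows.
\end{itemize}

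\emph{Case (ii), $L\to\ell\in(0,\infty)$.} I will split
$$
\sqrt{\tfrac14+L}=\sqrt{\tfrac14+\ell}+\Big(\sqrt{\tfrac14+L}-\sqrt{\tfrac14+\ell}\Big).
$$
The constant part contributes $z^{\pm\sqrt{1/4+\ell}}$. Combined with $\sqrt z$, this gives $z^{\alpha_\pm}$, since
$$
\tfrac12\pm\sqrt{\tfrac14+\ell}=\alpha_\pm .
$$
The prefactor $(\tfrac14+L)^{-1/4}$ tends to a positive constant. This yields \eqref{caseii}; the sign of the correction integral in the exponent should follow the sign $\pm$, so I will check the statement's convention for $v_-$. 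When $\int s^{-1}|L-\ell|<\infty$, the Lipschitz bound
$$
\Big|\sqrt{\tfrac14+L}-\sqrt{\tfrac14+\ell}\Big|\le \frac{|L-\ell|}{2\sqrt{\tfrac14+\ell}}
$$
makes the correction integral converge, so $v_\pm\sim C_\pm z^{\alpha_\pm}$.

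\emph{Case (iii), $L\to\infty$.} I will write $\sqrt{\tfrac14+L}=\sqrt L+(\sqrt{\tfrac14+L}-\sqrt L)$, which produces the term $D(z)$ directly. The prefactor satisfies
$$
\sqrt z\,(\tfrac14+L)^{-1/4}\sim (z^2/L)^{1/4}.
$$
Since
$$
\sqrt{\tfrac14+L}-\sqrt L=\frac{1/4}{\sqrt{\tfrac14+L}+\sqrt L}\le \frac{1}{8\sqrt L},
$$
the hypothesis $\int\frac{{\rm d}s}{s\sqrt{L}}<\infty$ makes $D(z)$ converge. This yields the last formula.

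The only step needing some care is the asymptotic equivalence $B(z)\sim\int s^{-1}L$ in (i-b), i.e.\ justifying L'H\^opital for divergent integrals. Everything else is bookkeeping of constants, relying on the monotonicity of $L$ only to guarantee that $L_\infty$ exists.
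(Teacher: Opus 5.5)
Your proposal is correct and is essentially the paper's proof. Both read every case off the unified formula \eqref{eq:p2-unified} by splitting $\sqrt{\frac14+L}$ into a leading part plus a remainder, absorbing constants and prefactor limits into $C_\pm$, and testing the remainder for integrability against ${\rm d}s/s$. Yours is in fact more complete in two places. The paper does not verify the addendum under $\int_{z_0}^{+\infty}s^{-1}L^2(s)\,{\rm d}s<+\infty$, which your $O(L^2)$ bound on the integrand of $B(z)-\int_{z_0}^z s^{-1}L(s)\,{\rm d}s$ handles. In case (ii) the paper asserts an $O(1)$ remainder, which is valid only when the correction integral converges.

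Your sign worry in (ii) is well founded. \eqref{eq:p2-unified} gives $v_-(z)\sim C_-z^{\alpha_-}\exp\{-\int_{z_0}^{z}s^{-1}(\sqrt{\frac14+L(s)}-\sqrt{\frac14+\ell})\,{\rm d}s\}$. So \eqref{caseii} as printed holds for $v_-$ only when that integral converges; it fails, for example, for the admissible choice $L(z)=\ell+1/\ln z$. This is harmless for Theorem~\ref{thm:critical-p2}, since only $v_+$ enters there.

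One small slip: your Lipschitz constant $\frac{1}{2\sqrt{1/4+\ell}}$ needs $L\ge\ell$. The constant $\frac{1}{\sqrt{1/4+\ell}}$ works whether $L$ increases or decreases, and the conclusion is unchanged.
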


\begin{proof}
We only check the proof from the unified formula
\eqref{eq:p2-unified}.

If $L(z)\to0$, then
$
    \left(1/4+L(z)\right)^{-1/4}
    \to \sqrt2,
$
and
$$
    \int_{z_0}^{z} s^{-1}\sqrt{\frac14+L(s)}\, {\rm d}s
    =
    \frac12\ln z+E(z)+C,
$$
with $ E(z) := \int_{z_0}^{z}s^{-1}\big( \sqrt{1/4+L(s)}-1/2 \big)\, {\rm d}s.$
By normalizing the fundamental solutions, case (i) is proved.
Furthermore,
\begin{equation*}
    \sqrt{\frac14+L(s)}-\frac12
    =
    \frac{L(s)}{
        \sqrt{\frac14+L(s)}+\frac12}\sim L(s).
\end{equation*}
Hence $B(z)$ converges if and only if
$\int_{z_0}^{+\infty}s^{-1} L(s)\, {\rm d}s$ converges.  This proves cases (i)-a and (i)-b.

If $L(z)\to\ell\in(0,+\infty)$, using Taylor expansion, we obtain
$$
    \int_{z_0}^{z}
    \frac{\sqrt{1+4L(s)}}{2s}\, {\rm d}s
    =\frac{\sqrt{1+4\ell}}{2}\ln z
    +O(1),~~{\rm as}~z\to+\infty.
$$
Since
$
    \left(1/4+L(z)\right)^{-1/4}
    \to
    \left(1/4+\ell\right)^{-1/4},
$
asymptotic behavior \eqref{caseii} holds.
Moreover, note that
$$
    \left|
        \sqrt{\frac14+L(s)}
        -
        \sqrt{\frac14+\ell}
    \right|
    \le
     \frac{2|L(s)-\ell|}{\sqrt{1+4L(s)}+\sqrt{1+4\ell}}\leq C|L(s)-\ell|,~~s\gg1,
$$
then if
$
    \int_{z_0}^{+\infty} s^{-1}|L(s)-\ell|\, {\rm d}s<+\infty
$, which completes the proof of case (ii).

Finally, if $L(z)\to+\infty$, then
$
    \left(1/4+L(z)\right)^{-1/4}
    \sim
    L(z)^{-1/4},
$
and
$$
    \int_{z_0}^{z} s^{-1}\sqrt{1/4+L(s)}\,{\rm d}s
    =\int_{z_0}^{z}s^{-1}\sqrt{L(s)}\,{\rm d}s
    +D(z).
$$
Since
\begin{equation*}
    \sqrt{\frac14+L(z)}-\sqrt {L(z)}
    =
    \frac{1/4}{
        \sqrt{1/4+L(z)}+\sqrt {L(z)}}
    \sim\frac{1}{8\sqrt {L(z)}},~~{\rm as}~z\to+\infty,
\end{equation*}
the correction $D(z)$ converges if and only if
$
    \int_{z_0}^{+\infty}
    \frac{{\rm d}s}{s\sqrt{L(s)}}<+\infty.
$
This proves case (iii) and completes the proof.
\end{proof}

We now return to the nonlinear equation \eqref{trans-EQ} with $q(z)=\frac{L(z)}{z^2}$.

\begin{lemma}
\label{lem:p2-subexponential}
For every $\mu>0$, there exists $z_0\gg1$ such that
\begin{equation}\label{eq:p2-3.5}
    |v_+(z)|+|v_-(z)|
    \le C_\mu {\rm e}^{\mu z},
    \qquad z\ge z_0,
\end{equation}
with $v_\pm$ defined by \eqref{eq:p2-unified}.
Moreover,
$
    \frac{v_-(z)}{v_+(z)}
    \to0
    ~\text{as }z\to+\infty.
 $
\end{lemma}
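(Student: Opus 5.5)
We work directly from the unified asymptotic formula \eqref{eq:p2-unified} of Lemma~\ref{lem:3.1}. Write
\[
\Theta(z):=\int_{z_0}^{z} s^{-1}\sqrt{\tfrac14+L(s)}\,{\rm d}s ,\qquad
P(z):=\sqrt{z}\Big(\tfrac14+L(z)\Big)^{-1/4},
\]
so that $v_\pm(z)=P(z){\rm e}^{\pm\Theta(z)}(1+o(1))$. The plan is to show separately that $P$, ${\rm e}^{\Theta}$ and ${\rm e}^{-\Theta}$ are sub-exponential.

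\textbf{Bounds on the prefactor.} Since $L>0$, we have $(\tfrac14+L(z))^{-1/4}\le 4^{1/4}=\sqrt2$, so $P(z)\le\sqrt{2z}$. Also $\Theta\ge 0$ for $z\ge z_0$, hence ${\rm e}^{-\Theta}\le 1$. Consequently $|v_-(z)|\le 2\sqrt{2z}$ for $z$ large, and this is $\le C_\mu{\rm e}^{\mu z}$ trivially.

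\textbf{Main step: $\Theta(z)=o(z)$.} This is the only nontrivial point. Potter's bound \cite[Theorem~1.5.6]{BGT1987} gives $L(s)\le C_\delta s^{\delta}$ for every $\delta>0$. Take $\delta=1$. Then
\[
s^{-1}\sqrt{\tfrac14+L(s)}\le \tfrac{1}{2s}+C s^{-1/2},
\]
and integrating yields $\Theta(z)\le \tfrac12\ln z+2C\sqrt z=o(z)$. Alternatively, the integrand tends to $0$ because $L(s)=o(s^2)$, and a Cesàro-type argument gives the same conclusion. Hence, for any $\mu>0$, there is $z_0$ with $\Theta(z)\le \mu z/2$ for $z\ge z_0$. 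This gives $|v_+(z)|\le 2\sqrt{2z}\,{\rm e}^{\mu z/2}\le C_\mu{\rm e}^{\mu z}$. Enlarging $z_0$ if needed, the factors $1+o(1)$ stay in $[1/2,2]$, and the constants are absorbed into $C_\mu$. This proves \eqref{eq:p2-3.5}.

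\textbf{Ratio.} From the asymptotics, $v_-(z)/v_+(z)={\rm e}^{-2\Theta(z)}(1+o(1))$. Since $\sqrt{\tfrac14+L(s)}\ge\tfrac12$, we get
\[
\Theta(z)\ge \tfrac12\ln(z/z_0)\to+\infty .
\]
Therefore $v_-(z)/v_+(z)\le C(z_0/z)\to 0$, which completes the proof. The only point requiring care is the sublinear growth of $\Theta$ in the case $L\to+\infty$, and the Potter bound handles it uniformly across all the cases of Corollary~\ref{cor:p2-classification}.
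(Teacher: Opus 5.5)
Your proof is correct and follows essentially the same route as the paper. You use a Potter-type bound $L(z)=o(z^\delta)$ to get $\int_{z_0}^{z}s^{-1}\sqrt{1/4+L(s)}\,{\rm d}s=o(z)$, note that the prefactor $\sqrt{z}\,(1/4+L(z))^{-1/4}$ is sub-exponential, and use $\sqrt{1/4+L}\ge 1/2$ to force $v_-/v_+\to0$; your explicit bounds $\sqrt{z}\,(1/4+L)^{-1/4}\le\sqrt{2z}$ and $v_-/v_+\le C z_0/z$ only make the paper's somewhat terser argument more concrete.
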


\begin{proof}
Since $L(z)$ is slowly varying, then
$
L(z)=o(z^\delta)~\text{as }z\to+\infty
$
for every $\delta>0$. Thus,
$
\sqrt{1/4+L(z)}=o(1/2+z^{\delta/2}),
$
and hence
$$
\int_{z_0}^{z} s^{-1}\sqrt{1/4+L(s)}\,{\rm d}s
= o(z),~~{\rm as}~z\to+\infty.
$$
Together with
$
\ln\left(z^{1/2}\left(1/4+L(z)\right)^{-1/4}\right)=o(z)~{\rm as}~z\to+\infty,
$
this proves \eqref{eq:p2-3.5}. On the other hand, by \eqref{eq:p2-unified},
$$
\frac{v_-(z)}{v_+(z)}=\exp\left\{-\int_{z_0}^{z} \frac{\sqrt{1+4L(s)}}{s}\,{\rm d}s
\right\}(1+o(1)).
$$
Note that
$
\sqrt{1/4+L(s)}\ge 1/2,
$
then Lemma \ref{lem:p2-subexponential} is completed.
\end{proof}

\medskip

We next use the same Green-function argument as in the case
$0<p<2$.
Set
$
Y(z):=\frac{y(z)}{v_+(z)},
$
which also satisfies \eqref{Green-p}.
Thus, define
\begin{equation}
    K_1(s)
    :={\rm e}^{-\lambda s}
    |v_-(s)|\,|v_+(s)|^2,~~{\rm and}~~
    K_2(s)
    :={\rm e}^{-\lambda s}|v_+(s)|^3.
\label{eq:p2-kernels}
\end{equation}

\begin{lemma}
\label{lem:p2-kernel-L1}
Under the assumptions of
Lemma~\ref{lem:3.1}, one has
\begin{equation}\label{p2-kernel-L1}
    K_1,K_2\in L^1([z_0,+\infty))
\end{equation}
for some sufficiently large $z_\mu$. In additional, there exists a finite constant $C_+>0$ such that
$$
    \frac{y(z)}{v_+(z)}
    \to C_+,~~as~z\to+\infty.
$$
\end{lemma}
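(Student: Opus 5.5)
The plan has two parts. First, integrability of $K_1,K_2$ follows from the sub-exponential bound of Lemma~\ref{lem:p2-subexponential}. Second, the convergence of $Y=y/v_+$ is obtained by repeating the Bihari-type argument of Theorem~\ref{lem:Y-bounded} verbatim, with the kernel of Lemma~\ref{lemma-Yz} replaced by $K_1$ and $K_2$.

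\textbf{Integrability.} Fix $\mu=\lambda/4$ in \eqref{eq:p2-3.5}. Then $|v_\pm(s)|\le C_\mu{\rm e}^{\lambda s/4}$ for $s\ge z_0$, so
\[
K_1(s)+K_2(s)\le 2C_\mu^3{\rm e}^{-\lambda s/4},
\]
which is integrable on $[z_0,+\infty)$ and gives \eqref{p2-kernel-L1}.

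\textbf{Kernel comparison.} Next I need the analogue of \eqref{ineq-est1} for the second integral in \eqref{Green-p}, that is,
\[
\frac{v_-(z)}{v_+(z)}\,v_+^3(s)\le C\,v_-(s)v_+^2(s),\qquad z_0\le s\le z.
\]
By \eqref{eq:p2-unified}, $v_-(z)/v_+(z)$ is comparable to ${\rm e}^{-2\Sigma(z)}$ with $\Sigma(z)=\int_{z_0}^z s^{-1}\sqrt{1/4+L(s)}\,{\rm d}s$ increasing. Hence $v_-(z)/v_+(z)\le C v_-(s)/v_+(s)$ for $s\le z$, and multiplying by $v_+^3(s)$ gives the bound. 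Both integrals in \eqref{Green-p} are therefore dominated by $C\int_{z_0}^z K_1(s)Y^2(s)\,{\rm d}s$. Moreover, $v_-/v_+\to0$ by Lemma~\ref{lem:p2-subexponential}, so $|B|v_-/v_+$ is bounded.

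\textbf{Boundedness and limit of $Y$.} Set
\[
U(z)=C_0+C_1\int_{z_0}^zK_1Y^2\,{\rm d}s .
\]
Enlarging $z_0$ if needed, we can arrange $C_1\int_{z_0}^\infty K_1\le 1/(2C_0)$. Then $|Y|\le U$, and the differential inequality $(1/U)'\ge -C_1K_1$ yields $|Y|\le 2C_0$. The enlargement of $z_0$ must be handled with care: $C_0$ depends on $A$, $B$ and $Y$ at the new base point. I would treat this exactly as the paper does in Theorem~\ref{lem:Y-bounded}, re-basing the representation at the new $z_0$ while $y$ stays continuous. With $Y$ bounded, the first integral in \eqref{Green-p} converges absolutely, so set
\[
C_+=A+\tfrac12\int_{z_0}^\infty {\rm e}^{-\lambda s}Y^2v_-v_+^2\,{\rm d}s .
\]
The remaining terms tend to zero: $Bv_-/v_+\to0$ by Lemma~\ref{lem:p2-subexponential}. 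For the last term, split the integral at $z/2$. On $[z/2,z]$ it is at most $C\int_{z/2}^\infty K_1\to0$. On $[z_0,z/2]$ it is at most $\frac{v_-(z)}{v_+(z)}\int_{z_0}^{z/2}K_2$, and $\frac{v_-(z)}{v_+(z)}\to0$ while $\int K_2<\infty$. Thus $Y(z)\to C_+\ge0$.

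\textbf{Positivity of $C_+$.} This is the main obstacle. Unlike the case $0<p<2$, $v_-$ need not tend to $0$: in case (i-a) of Corollary~\ref{cor:p2-classification}, $v_-\to C_-$. Suppose $C_+=0$. Then
\[
y(z)=-\tfrac12 v_+(z)\int_z^\infty {\rm e}^{-\lambda s}Y^2v_-v_+^2\,{\rm d}s+(\text{bounded})\cdot v_-(z).
\]
The first term is bounded by $v_+(z)\,{\rm e}^{-\lambda z/2}\to0$, so $y=O(v_-)$. I would then use that $y''=(q+\phi)y>0$, so $y$ is convex on $\mathbb R$. Since $y(-\infty)=0$ and $y>0$, a convex function cannot be bounded and positive on all of $\mathbb{R}$ unless it is nondecreasing. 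In the unbounded case $y\to+\infty$; in the bounded case $y$ is nondecreasing, hence $y'\ge0$ and $y$ tends to a positive limit or grows. On the other hand, $v_-\le C$ in every case of Corollary~\ref{cor:p2-classification} ($v_-/v_+\to0$ and $v_-$ is the recessive solution of $v''=qv$ with $q>0$, hence bounded and nonincreasing), so $y$ is bounded. A bounded convex function on $\mathbb R$ is constant, contradicting $y(-\infty)=0<y$. Hence $C_+>0$. This convexity argument also covers the $0<p<2$ case without needing $v_-\to0$.
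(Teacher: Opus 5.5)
Your argument is correct in outline and agrees with the paper on the first two parts. Integrability of $K_1,K_2$ comes from the sub-exponential bound of Lemma~\ref{lem:p2-subexponential}, and convergence of $Y$ comes from re-running the argument of Theorem~\ref{lem:Y-bounded} on \eqref{Green-p}.

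Where you genuinely depart is the positivity of $C_+$, and there your route is better than what the paper supplies. The paper's proof only says to repeat Theorem~\ref{lem:Y-bounded}. That positivity step uses $v_-(z)\to0$ to get $y(z)\to0$ at $+\infty$, and then derives a contradiction at an interior maximum. As you point out, in case (i-a) of Corollary~\ref{cor:p2-classification} one has $v_-\to C_->0$. So $C_+=0$ would only give a bounded $y$ with a positive limit at $+\infty$, and no interior maximum need exist.

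Your replacement works in every subcase. If $C_+=0$, then $y$ is bounded on $\mathbb{R}$, while $y''=(q+\phi)y>0$ makes $y$ strictly convex, and a bounded convex function on $\mathbb{R}$ is constant. It needs only that $v_-$ is bounded. This also follows directly from \eqref{eq:p2-unified}, since $\int_{z_0}^z s^{-1}\sqrt{1/4+L(s)}\,{\rm d}s\ge\frac12\ln(z/z_0)$ and $(1/4+L)^{-1/4}\le\sqrt2$. Your formula $C_+=A+\frac12\int_{z_0}^{\infty}\cdots$ also has the sign dictated by \eqref{Green-p}; the minus sign in the paper's proof of Theorem~\ref{lem:Y-bounded} is a slip.

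The one step you leave open is the choice of base point in the Bihari argument, and deferring to the paper does not close it. The paper's proof of Theorem~\ref{lem:Y-bounded} picks $C_0$ so that $C_1C_0\int_{z_0}^\infty K\le\frac12$, without checking that this $C_0$ dominates $|A+Bv_-/v_+|$. After re-basing at a larger $z_0$, the new $A,B$ are determined by $y(z_0),y'(z_0)$. You would then need them to grow more slowly than $\int_{z_0}^\infty K_1$ decays, which is essentially the bound you are trying to prove.

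The clean fix is Lemma~\ref{lemma2.12new}. As its proof shows, it gives $y(z)={\rm e}^{\lambda z}\phi(z)\le K{\rm e}^{\delta z}$ for large $z$, with $\delta>0$ as small as you like. Write \eqref{Green-p} as $Y=A+B\frac{v_-}{v_+}+\frac12\int_{z_0}^z{\rm e}^{-\lambda s}y^2v_-\,{\rm d}s-\frac12\frac{v_-(z)}{v_+(z)}\int_{z_0}^z{\rm e}^{-\lambda s}y^2v_+\,{\rm d}s$. Both integrands are then at most $C{\rm e}^{-(\lambda-2\delta-\mu)s}$. With $2\delta+\mu<\lambda$, the first integral converges and the last term is $O(v_-(z)/v_+(z))\to0$. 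Hence $Y\to C_+$ with no Bihari step and no re-basing, exactly as in the proof of Theorem~\ref{thm:p>2-nonlinear}. The same estimates then feed into your convexity argument unchanged.
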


\begin{proof}
Choose $\mu>0$ small such that
$3\mu<\lambda$.
By Lemma~\ref{lem:p2-subexponential}, then
$
    K_1(s)+K_2(s)  \le  C {\rm e}^{-(\lambda-3\mu)s},
$
which proves \eqref{p2-kernel-L1}.
Moreover, it follows from \eqref{p2-kernel-L1} that \eqref{Green-p} has following estimate
\begin{equation*}
    |Y(z)|
    \le
    C+ C\int_{z_0}^z
    K(s)|Y(s)|^2\,{\rm d}s,
    \qquad
    K:=K_1+K_2\in L^1([z_0,+\infty)),
\end{equation*}
By repeating the nonlinear estimates established in Theorem \ref{lem:Y-bounded} for the case $0<p<2$, we complete the proof of Lemma \ref{lem:p2-kernel-L1}. Finally, recalling
$
y(z)={\rm e}^{\frac{c}{2}z}\phi(z)
$
and Corollary \ref{cor:p2-classification}, the proof of Theorem \ref{thm:critical-p2} follows immediately.
\end{proof}

\section{Asymptotic stability of forced waves}
This section is devoted to the asymptotic stability of solution $u(t,x)$ of (\ref{initialvpbm}) in
moving coordinate $z=x-ct$ with more general bounded non-negative  initial data $u_0(z)$,
here $u(t,z)$ satisfies the following initial value problem
\begin{equation}\label{eq:initi865}
\begin{cases}
 & u_t=u_{zz}+2\lambda u_z+u(r(z)-u),~~z\in \mathbb{R},~t>0,\\
 & u(0,z)=u_0(z),~~~~~~~~~~~~~~~~~~~~~~z\in\mathbb{R}, \\
\end{cases}
\end{equation}
where $u_0$ satisfies \eqref{eq:data} and $\lambda=c/2$ defined by \eqref{defin1}.

\begin{lemma}\label{lemma3.1-Eq}
Assume (H1) and let $c=2\sqrt{r_+}$. Then for any initial datum $u_0$ satisfying \eqref{eq:data}, \eqref{eq:initi865} has a unique global solution $u(t,\cdot)$ such that
\begin{equation}\label{ineq:max}
 m\phi(z)\le u(t,z)\le\max\{\|u_0\|_{L^\infty(\mathbb{R})},r_+\},~~t\ge0,\ z\in\mathbb{R}.
\end{equation}
\end{lemma}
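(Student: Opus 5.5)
Our approach is to treat \eqref{eq:initi865} as a semilinear parabolic problem with a globally Lipschitz-on-bounded-sets nonlinearity $f(z,u)=u(r(z)-u)$. We then obtain global existence, uniqueness and the two-sided bound \eqref{ineq:max} by the comparison principle, using two explicit ordered barriers.

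\textbf{Local theory.} Since $u_0\in{\rm BUC}(\mathbb{R})$ and $r\in C(\mathbb{R})$ is bounded (by (H1), $r_-\le r<r_+$), the standard mild formulation applies. We write
\[
u(t)=\mathcal{T}(t)u_0+\int_0^t\mathcal{T}(t-s)\big[u(r-u)\big](s)\,{\rm d}s,
\]
where $\mathcal{T}$ is the heat semigroup with drift $2\lambda\partial_z$ (a translated Gaussian, which preserves ${\rm BUC}$). A contraction argument on $C([0,T];{\rm BUC}(\mathbb{R}))$ then gives a unique local mild solution. By parabolic regularity it is classical for $t>0$, and it extends as long as $\|u(t)\|_{L^\infty}$ stays bounded. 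Uniqueness among bounded solutions follows from Gronwall's inequality applied to the difference of two mild solutions.

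\textbf{Upper bound.} Set $M:=\max\{\|u_0\|_{L^\infty},r_+\}$. The constant $M$ is a supersolution, because
\[
M(r(z)-M)\le M(r_+-M)\le 0 .
\]
Since $M\ge u_0$, the comparison principle for bounded solutions on $\mathbb{R}$ (Phragmén–Lindelöf type, valid because all functions involved are bounded and the coefficients are bounded) yields $u\le M$ on the existence interval. This a priori bound gives global existence.

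\textbf{Lower bound.} We show that $m\phi$ is a subsolution when $m\in(0,1]$. Using \eqref{1.3},
\[
(m\phi)''+2\lambda(m\phi)'+m\phi\big(r-m\phi\big)=m\phi\big(\phi-m\phi\big)=m(1-m)\phi^2\ge0,
\]
so $m\phi$ is a stationary subsolution of \eqref{eq:initi865}. Since $m\phi\le u_0$ by \eqref{eq:data}, comparison gives $u(t,z)\ge m\phi(z)$ for all $t\ge0$. The same argument gives $u\ge0$, which is needed so that the comparison is carried out in the range where $f$ is Lipschitz on the bounded invariant interval $[0,M]$.

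\textbf{Main obstacle.} The key technical point is justifying the comparison principle on the unbounded domain $\mathbb{R}$ with merely continuous $r$ and ${\rm BUC}$ data. We handle it in the standard way. Let $w$ be the difference between the supersolution and the subsolution. Then $w$ satisfies the linear inequality
\[
w_t\ge w_{zz}+2\lambda w_z+b(t,z)w ,
\]
where $b$ is bounded since all functions lie in $[0,M]$. Setting $\tilde w={\rm e}^{-Kt}w$ with $K>\sup|b|$, we apply the maximum principle to
\[
\tilde w+\varepsilon\,{\rm e}^{Kt}\,(1+z^2)
\]
on large boxes, after enlarging $K$ so that this auxiliary function is a strict supersolution of the linear operator. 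Letting $\varepsilon\to0$ then gives $w\ge0$. Near $t=0$, continuity of the mild solution in ${\rm BUC}$ ensures the initial ordering is inherited.
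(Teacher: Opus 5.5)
Your proposal is correct and follows the same route as the paper. The constant $M=\max\{\|u_0\|_{L^\infty(\mathbb{R})},r_+\}$ serves as a stationary supersolution, $m\phi$ serves as a stationary subsolution via $(m\phi)''+2\lambda(m\phi)'+m\phi(r-m\phi)=m(1-m)\phi^2\ge0$, and the comparison principle finishes the argument; your mild-solution local theory, Gronwall uniqueness, and the $\varepsilon(1+z^2)$ barrier on large boxes are standard details that the paper leaves implicit.
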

\begin{proof}
Let
$$
 M=\max\{\|u_0\|_{L^\infty(\mathbb{R})},r_+\}.
$$
The constant $M$ is a stationary supersolution because
$M(r(z)-M)\le0$. Moreover,
\begin{align*}
 &(m\phi)''+2\lambda(m\phi)'+m\phi(r(z)-m\phi)\notag\\
 =&m\bigl[\phi''+2\lambda\phi'+\phi(r(z)-\phi)\bigr]
   +m(1-m)\phi^2
 =m(1-m)\phi^2\ge0.
\end{align*}
The last inequality holds for $0<m\leq 1$.
Thus $m\phi$ is a stationary subsolution. Applying the comparison principle to \eqref{eq:initi865}, we obtain
\eqref{ineq:max} and $u(t,z)$ is global. This completes the proof.
\end{proof}

In the following, fix $m\in(0,1]$, since $c=2\sqrt{r_+}=2\lambda$, then set
\begin{equation}\label{eq:comp1}
 v(t,z):=u(t,z)-\phi(z),
\end{equation}
and note that
$$
 u(r(z)-u)-\phi(r(z)-\phi)=(r(z)-u-\phi)(u-\phi).
$$
Combining this with \eqref{1.3} and \eqref{eq:initi865}, we see that the perturbation
$v(t,z)$ satisfies
\begin{equation}\label{eq:coma2}
 \begin{cases}
 v_t=v_{zz}+2\lambda v_z+
       \bigl(r(z)-u(t,z)-\phi(z)\bigr)v,
       &t>0,\\
 v(0,z)=v_0(z),&z\in\mathbb{R}.
 \end{cases}
\end{equation}
Next, we shall define the exponentially weighted perturbation by
\begin{equation}\label{weighted-pertur}
 W(t,z):={\rm e}^{\lambda z}v(t,z),
\end{equation}
further by $q(z)=\lambda^2-r(z)$,
then \eqref{eq:coma2} can be rewritten as
\begin{equation}\label{we2}
 W_t=W_{zz}-\Big(q(z)+u(t,z)+\phi(z)\Big)W,~~t>0,~z\in\mathbb{R},
\end{equation}
where $v_0$ and $W_0$ satisfy \eqref{eq:notation}, and $W_0$ is not assumed to be bounded.

\subsection{A linear upper bound for $v$ and $W$}

In this sub-section, we will obtain a linear upper bound for $v$ and $W$ of \eqref{eq:coma2} and \eqref{weighted-pertur}, respectively. Before this,
we first give some standard linear results needed in the comparison principle.
For a real-valued $V(\cdot)\in L^\infty(\mathbb{R})$ with $V\ge0$, let
$K_V(t,x,y)$ be the nonnegative fundamental solution of
\begin{equation}\label{linear-equation-2}
 w_t=w_{xx}-V(x)w.
\end{equation}
It follows from the theory of weak fundamental solutions; see
\cite[Section~8, Theorem~10]{Aronson1968} that for bounded measurable initial data
$g$, the solution is represented by
$$
 w(t,x)=\int_{\mathbb{R}}K_V(t,x,y)g(y){\rm d} y.
$$
Further it is easy to check that
\begin{equation}\label{eq:symmetry}
 K_V(t,x,y)=K_V(t,y,x),~~ K_V(t+s,x,y)
 =\int_{\mathbb{R}}K_V(t,x,\xi)K_V(s,\xi,y){\rm d}\xi,~ t,s>0.
\end{equation}
The sign of $V(x)$ will provide a precise estimate. Set
$$
 M:=\|V\|_{L^\infty(\mathbb{R})},
 \qquad
 G(t,x):=\frac{1}{\sqrt{4\pi t}}
             \exp\!\left\{-\frac{x^2}{4t}\right\}.
$$
For $g\in C_c^\infty(\mathbb{R})$ with $g\ge0$, let
$$
 \widetilde{u}(t,x)=\int_{\mathbb{R}}G(t,x-y)g(y){\rm d} y~~~{\rm and}~~~
 u_M(t,x)={\rm e}^{-Mt}\widetilde{u}(t,x).
$$
Therefore, we define linear operator $\mathcal L_V:=\partial_t-\partial_{xx}+V(x)$, using the fact that
$\mathcal L_V \widetilde{u}=V\widetilde{u}\ge0$, we obtain
$$
 \mathcal L_V u_M=(V-M)u_M\le0.
$$
Comparied with the solution of \eqref{linear-equation-2}
with initial datum $g$, this gives
\begin{equation}\label{eq:f-bound}
 {\rm e}^{-Mt}G(t,x-y)\le K_V(t,x,y)\le G(t,x-y),
 ~~~ t>0,\, x,y\in\mathbb{R}.
\end{equation}
By the comparison principle for \eqref{linear-equation-2}, if
$V_1,V_2\in L^\infty(\mathbb{R})$ and $V_1\ge V_2\ge0$, then $V_1$ is a subsolution for $V_2$. Therefore,
\begin{equation}\label{kernel-comparison}
 K_{V_1}(t,x,y)\le K_{V_2}(t,x,y).
\end{equation}
Define
\begin{equation}\label{eq:general}
 S_V(t,x):=\int_{\mathbb{R}}K_V(t,x,y){\rm d} y.
\end{equation}
It is the bounded weak solution with constant initial datum $1$:
\begin{equation}\label{eq:-genl2}
 (S_V)_t=(S_V)_{xx}-V(x)S_V,
 \qquad S_V(0,x)=1.
\end{equation}
Integrating \eqref{eq:f-bound} yields
\begin{equation}\label{eq:mass-basic-bound}
 {\rm e}^{-Mt}\le S_V(t,x)\le1,
 \qquad
 \|S_V(t,\cdot)-1\|_\infty\le1-{\rm e}^{-Mt}.
\end{equation}
In particular, the initial condition in
\eqref{eq:-genl2} holds uniformly in $x$.
By \eqref{eq:symmetry}, \eqref{eq:general} and $0\le S_V\le1$, we also have
\begin{equation}\label{eq:monotone}
 \begin{aligned}
 S_V(t+s,x)
 =\int_{\mathbb{R}}K_V(t,x,\xi)S_V(s,\xi){\rm d}\xi
 \le\int_{\mathbb{R}}K_V(t,x,\xi){\rm d}\xi=S_V(t,x).
 \end{aligned}
\end{equation}
Thus $t\mapsto S_V(t,x)$ is nonincreasing for each fixed $x$.
The exponential change of variables removes the drift from \eqref{eq:coma2}, which obtains \eqref{we2}.
However, its effect on the initial datum must be analyzed separately, since ${\rm e}^{\lambda z}v_0(z)$ may be unbounded.

Next, we will construct upper bounds for $v(z)$ and $W(z)$ by using the weak comparison principle.
Thus, we define
\begin{equation}\label{eq:Km-Sm}
 V_m=q(z)+(1+m)\phi,~~ K_m(t,z,y):=K_{V_m}(t,z,y),~~
 S_m(t,z):=\int_{\mathbb{R}}K_m(t,z,y){\rm d} y.
\end{equation}

\begin{lemma}\label{lem:linear-up}
Let $f\in{\rm BUC}(\mathbb{R})$ be
nonnegative and define
\begin{equation}\label{eq:h-definition}
 h_f(t,z):={\rm e}^{-\lambda z}
       \int_{\mathbb{R}}K_m(t,z,y){\rm e}^{\lambda y}f(y){\rm d} y,
       \,\,\, t>0,
\end{equation}
which satisfies
\begin{equation}\label{uni-bound}
 0\le h_f(t,z)\le {\rm e}^{\lambda^2t}\|f\|_{L^\infty(\mathbb{R})}.
\end{equation}
Moreover, $h_f$ is the unique weak solution of
\begin{equation}\label{eq:h-pde}
 \begin{cases}
 (h_f)_t=(h_f)_{zz}+2\lambda (h_f)_z
       +\bigl(r(z)-(1+m)\phi(z)\bigr)h_f,&t>0,\\
 h_f(0,z)=f(z),&z\in\mathbb{R},
 \end{cases}
\end{equation}
If \eqref{eq:data} and \eqref{eq:notation} also hold, then
\begin{align}
 |v(t,z)|&\le h_{|v_0|}(t,z),
 \label{lemma-v}\\
 |W(t,z)|&\le\int_{\mathbb{R}}K_m(t,z,y)|W_0(y)|{\rm d} y.
 \label{lemma-W}
\end{align}
\end{lemma}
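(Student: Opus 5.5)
The plan is to reduce everything to the drift-free operator via the weight ${\rm e}^{\lambda z}$ and then apply the comparison principle twice.

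\textbf{Bounds and equation for $h_f$.} First, the uniform bound \eqref{uni-bound}. Since $q\ge0$ and $\phi>0$, we have $V_m\ge -\lambda^2+\dots$; more precisely, $V_m=q+(1+m)\phi\ge0$. Hence \eqref{kernel-comparison} gives $0\le K_m\le K_0=G$. Then
$$
0\le h_f(t,z)\le \|f\|_{L^\infty(\mathbb{R})}\int_{\mathbb{R}}G(t,z-y){\rm e}^{\lambda(y-z)}{\rm d}y={\rm e}^{\lambda^2t}\|f\|_{L^\infty(\mathbb{R})},
$$
where the last equality is the Gaussian moment identity. Next, set $w(t,z):={\rm e}^{\lambda z}h_f(t,z)$. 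By construction $w$ is the weak solution of $w_t=w_{zz}-V_mw$ with initial datum ${\rm e}^{\lambda z}f$. Reversing the change of variables used to pass from \eqref{eq:coma2} to \eqref{we2} (with $u$ replaced by $m\phi$, i.e.\ using $\lambda^2-q=r$), we obtain \eqref{eq:h-pde}. The initial datum ${\rm e}^{\lambda z}f$ is only locally bounded. We therefore approximate $f$ by truncations $f\chi_{[-n,n]}$, or invoke Aronson's theory on data of exponential growth. Uniqueness in the class of solutions bounded by $C{\rm e}^{Ct}$ follows from the standard maximum principle for \eqref{eq:h-pde}, which has bounded coefficients.

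\textbf{Comparison for $v$.} By Lemma \ref{lemma3.1-Eq}, $u\ge m\phi$, so the coefficient in \eqref{eq:coma2} satisfies
$$
r-u-\phi\le r-(1+m)\phi.
$$
Write \eqref{eq:coma2} as $v_t=v_{zz}+2\lambda v_z+c(t,z)v$ with $c:=r-u-\phi$ bounded. Then $\bar h:=h_{|v_0|}\ge0$ satisfies
$$
\bar h_t-\bar h_{zz}-2\lambda\bar h_z-c\bar h=\bigl(u-m\phi\bigr)\bar h\ge0.
$$
Thus $\bar h$ is a supersolution of the linear equation satisfied by both $v$ and $-v$. Since $\pm v(0)\le \bar h(0)$, the linear comparison principle (bounded coefficients, bounded solutions on finite time intervals) yields \eqref{lemma-v}. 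Multiplying by ${\rm e}^{\lambda z}$ and using the definition \eqref{eq:h-definition} gives
$$
|W|\le {\rm e}^{\lambda z}h_{|v_0|}=\int_{\mathbb{R}} K_m|W_0|\,{\rm d}y,
$$
which is \eqref{lemma-W}.

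\textbf{Main obstacle.} The delicate point is the weak framework. We must justify that $h_f$ is a genuine (weak or classical) solution attaining its initial datum, and that comparison applies even though $W_0$ may be unbounded. I would handle this by working with $h_f$ and $v$, which are bounded on $[0,T]$, rather than with $W$. I would also verify the continuity of $h_f$ at $t=0$ using \eqref{eq:f-bound} together with the uniform continuity of $f$. Once the comparison for $v$ is established in bounded classes, the estimate for $W$ is a mere rewriting and needs no separate argument.
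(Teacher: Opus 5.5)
Your proposal is correct and follows the paper's own proof. It uses the bound $0\le K_m\le G$ together with the Gaussian moment identity for \eqref{uni-bound}, and truncation of the data plus the drift-removing substitution ${\rm e}^{-\lambda z}$ (with $\lambda^2-V_m=r-(1+m)\phi$) for \eqref{eq:h-pde}. It then uses comparison of $\pm v$ against $h_{|v_0|}$ via the identity $(u-m\phi)h\ge0$, and multiplies by ${\rm e}^{\lambda z}$ to get \eqref{lemma-W}, with continuity at $t=0$ and the uniqueness class spelled out a little more explicitly than in the paper.
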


\begin{proof}
Let $f\in{\rm BUC}(\mathbb{R})$ be nonnegative and satisfy \eqref{eq:h-definition}.
Applying the Gaussian approximate-identity property gives $ \|h_f(t,\cdot)-f(\cdot)\|_{L^\infty(\mathbb{R})}\to0$ uniformly as
$t\downarrow0$.
Further by \eqref{eq:f-bound},
\begin{align}\label{eq:h-finite}
 h_f(t,z)
 &\le \|f\|_{L^\infty(\mathbb{R})}
 \int_{\mathbb{R}}G(t,z-y){\rm e}^{\lambda(y-z)} {\rm d} y.
\end{align}
Note that
$
 {\rm e}^{\lambda(y-z)}G(t,z-y)
 ={\rm e}^{\lambda^2t}G\bigl(t,z-y+2\lambda t\bigr),
$
which with \eqref{eq:h-finite}, we have
\begin{align}\label{eq:h-bound}
 h_f(t,z)
 \le {\rm e}^{\lambda^2t}\|f\|_{L^\infty(\mathbb{R})}
 \int_{\mathbb{R}}G\bigl(t,z-y+2\lambda t\bigr){\rm d} y
 ={\rm e}^{\lambda^2t}\|f\|_{L^\infty(\mathbb{R})}.
\end{align}
This proves \eqref{uni-bound}.
Next, we choose a nonnegative cutoffs
$\chi_N\in C_c^\infty(\mathbb{R})$ such that $0\le\chi_N\le1$ and
$\chi_N\uparrow1$, and set $f_N=\chi_Nf$. Then
$$
 F_N(t,z)=\int_{\mathbb{R}}K_m(t,z,y){\rm e}^{\lambda y}f_N(y){\rm d} y,
$$
Since $|{\rm e}^{\lambda y}f_N(y)|<+\infty$, thus, $F_N(t,z)$ solves
$
 (F_N)_t=(F_N)_{zz}-V_m(z)F_N
$
with bounded, compactly supported initial data
and $V_m$ satisfies \eqref{eq:Km-Sm}.
Combining $h_N={\rm e}^{-\lambda z}F_N$ and differentiating gives
\begin{equation}\label{cut-hN}
 (h_N)_t=(h_N)_{zz}+2\lambda(h_N)_z
 +(\lambda^2-V_m(z))h_N.
\end{equation}
The bound \eqref{cut-hN} is uniform in $N$ on every finite
time. Since
$
 \lambda^2-V_m(z)
 =r(z)-(1+m)\phi(z),
$
and then letting $N\to+\infty$ , we obtain \eqref{eq:h-pde}.

Let $v_0$ be the initial datum of \eqref{eq:coma2} satisfying \eqref{eq:notation}.
Now set $f=|v_0|$ and $h=h_{|v_0|}$ for convenience, which satisfies \eqref{eq:h-pde}. By Lemma \ref{lemma3.1-Eq}, substituting this into \eqref{eq:coma2} gives
\begin{align*}
 &h_t-h_{zz}-2\lambda h_z-(r(z)-u-\phi)h\notag\\
 &=\bigl[r(z)-(1+m)\phi-(r(z)-u-\phi)\bigr]h
 =(u-m\phi)h\ge0.
\end{align*}
Since $h(0,z)=|v_0(z)|\ge v_0(z)$, applying the comparison principle with $v$ and $-v$ gives \eqref{lemma-v}.
Multiplying \eqref{eq:h-definition} by ${\rm e}^{\lambda z}$, we obtain the key
estimate \eqref{lemma-W}. This complete the proof.
\end{proof}

\subsection{Decay estimates for $S_m$ and $K_m$}

In order to obtain the decay estimates of the solution $v$ of \eqref{eq:coma2} and the solution $W$ of \eqref{we2}, rather than only a finite-time bound established in Lemma \ref{lem:linear-up}, we will further give more detailed decay estimates for $S_m(t,z)$ and $K_m(t,z,y)$.
Since $\phi(-\infty)=r_-$, then by \eqref{r-TRANS} and \eqref{eq:Km-Sm} obtains
\[
 \lim_{z\to-\infty}V_m(z)=\lambda^2+mr_->\lambda^2,
\]
we may fix $\kappa\in(\lambda^2,\lambda^2+mr_-)$ and then choose $z_1<-1$
such that
\begin{equation}\label{eq:left}
 V_m(z)\ge\kappa,~z\le z_1;~~{\rm and}~~ \eta:=\kappa-\lambda^2>0.
\end{equation}

\begin{lemma}\label{lem:mass-decay}
Under (H1), there exists some constant $C_\kappa>0$ such that
\begin{equation}\label{S-estimate}
 S_m(t,z)\le C_\kappa
       \min\!\left\{1,\frac{a(z)}{\sqrt t}\right\},
 \quad t>0,~~ z\in\mathbb{R},
\end{equation}
with $a(z)$ defined by \eqref{eq:weight}.
\end{lemma}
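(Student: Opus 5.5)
The plan is to reduce $S_m$ to the explicitly solvable case of a step potential, compute its Laplace transform in $t$ in closed form, and convert the Laplace bound into a pointwise-in-time bound using the monotonicity \eqref{eq:monotone}.

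\emph{Reduction to a step potential.} Since $S_m\le 1$ by \eqref{eq:mass-basic-bound} and $a(z)\ge1$, the bound \eqref{S-estimate} is trivial for $0<t\le 1$. It therefore suffices to prove $S_m(t,z)\le C a(z)/\sqrt t$ for $t\ge1$. Let
\[
V_\kappa:=\kappa\,\mathds{1}_{(-\infty,z_1]}.
\]
Because $q\ge0$ and $\phi>0$, we have $V_m\ge0$ on $\mathbb{R}$, and by \eqref{eq:left} $V_m\ge\kappa$ on $(-\infty,z_1]$; hence $V_m\ge V_\kappa\ge0$. The kernel comparison \eqref{kernel-comparison}, integrated in $y$, gives $S_m\le S_{V_\kappa}=:S$. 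So it is enough to bound $S$. Note that $S$ is again nonincreasing in $t$ by \eqref{eq:monotone}.

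\emph{From Laplace transform to pointwise bound.} For $s>0$ put $\widehat S(s,z)=\int_0^\infty {\rm e}^{-su}S(u,z)\,{\rm d}u$. Since $u\mapsto S(u,z)$ is nonincreasing,
\[
\widehat S(1/t,z)\ \ge\ \int_0^t {\rm e}^{-u/t}S(u,z)\,{\rm d}u\ \ge\ {\rm e}^{-1}\,t\,S(t,z),
\]
so $S(t,z)\le {\rm e}\,t^{-1}\widehat S(1/t,z)$. The goal thus becomes
\[
\widehat S(s,z)\le C\,a(z)\,s^{-1/2}\qquad\text{for }0<s\le1.
\]

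\emph{Explicit resolvent.} By \eqref{eq:-genl2}, $\widehat S$ is the bounded solution of
\[
s\widehat S-\widehat S''+V_\kappa\widehat S=1,
\]
with $\widehat S$ and $\widehat S'$ continuous at $z_1$ (continuity of $\widehat S'$ should follow from the weak formulation, since $V_\kappa$ is bounded). Write $x=z-z_1$. The bounded solution has the form
\begin{itemize}
\item[] for $x>0$: $\widehat S=1/s-A{\rm e}^{-\sqrt s\,x}$;
\item[] for $x\le0$: $\widehat S=1/(s+\kappa)+B{\rm e}^{\sqrt{s+\kappa}\,x}$.
\end{itemize}
Matching value and derivative at $x=0$ gives $1/s-A=1/(s+\kappa)+B$ and $\sqrt s\,A=\sqrt{s+\kappa}\,B$, whence
\[
A=\frac{\kappa}{s\sqrt{s+\kappa}\,\bigl(\sqrt{s+\kappa}+\sqrt s\bigr)}.
\]
For $0<s\le1$ one then checks two estimates:
\[
\widehat S(s,z_1)=\frac1s-A=\frac1s\Bigl[1-\frac{\kappa}{(s+\kappa)\bigl(1+\sqrt{s/(s+\kappa)}\bigr)}\Bigr]\le C s^{-1/2},
\]
because the bracket is $O(\sqrt s)$, and $A\le C/s$. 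Consequently, for $x>0$,
\[
\widehat S(s,z)=\Bigl(\frac1s-A\Bigr)+A\bigl(1-{\rm e}^{-\sqrt s\,x}\bigr)\le C s^{-1/2}+C\,\frac{\sqrt s\,x}{s}\le C(1+x)\,s^{-1/2}.
\]
For $x\le0$ the left formula gives $\widehat S\le 1/\kappa+B\le 1/\kappa+\widehat S(s,z_1)\le C s^{-1/2}$, using $B\le 1/s-A$ from the matching relation.

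\emph{Conclusion and main obstacle.} In both regions $\widehat S(s,z)\le C\,a(z)\,s^{-1/2}$ with $C$ depending only on $\kappa$. Inserting $s=1/t$ into the Laplace-to-pointwise inequality yields $S_m(t,z)\le S(t,z)\le C_\kappa\,a(z)/\sqrt t$ for $t\ge1$, which together with $S_m\le1$ gives \eqref{S-estimate}.

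The step I expect to need the most care is justifying that $\widehat S$ is exactly this bounded $C^1$ solution of the resolvent equation. This should follow from the weak-solution framework of \cite{Aronson1968}, $0\le S\le1$, and uniqueness of bounded solutions of $(s+V_\kappa)w-w''=1$. The algebraic cancellation giving $1/s-A=O(s^{-1/2})$ is the heart of the $t^{-1/2}$ rate: it expresses that the absorbing left half-line acts, at large times, like a Dirichlet boundary.
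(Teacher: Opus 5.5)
Your proposal is correct and is essentially the paper's argument: you dominate $S_m$ by the mass function of the step potential $\kappa\mathbf 1_{(-\infty,z_1]}$ via kernel comparison, solve the Laplace-transformed resolvent equation explicitly with $C^1$ matching at the step (your $A$ is the paper's $-A_s$, and $\widehat S(s,z_1)=1/(\sqrt s\sqrt{s+\kappa})$), and return to a pointwise bound at $s=1/t$ using the monotonicity of $t\mapsto S(t,z)$. The remaining differences are cosmetic: you treat $t\le1$ separately, and you use the constant ${\rm e}$ where the paper uses $(1-{\rm e}^{-1})^{-1}$.
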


\begin{proof}
Because $V_m\ge0$ and \eqref{eq:left} holds, then
$
 V_m(z)\ge\kappa\mathbf 1_{(-\infty,z_1]}(z).
$
Let $S_*(t,z)$ solve
\begin{equation}\label{eq:step-pde}
 \begin{cases}
 (S_*)_t=(S_*)_{zz}
 -\kappa\mathbf 1_{(-\infty,0]}S_*,
 &t>0,\ z\in\mathbb{R},\\
 S_*(0,z)=1,&z\in\mathbb{R}.
 \end{cases}
\end{equation}
By \eqref{kernel-comparison} and  the comparison principle, it is easy to check that
\begin{equation}\label{eq:compar}
 0\le S_m(t,z)\le S_*(t,z-z_1)\le1.
\end{equation}
Define Laplace transform in time by
\begin{equation*}
 R_s(z)=\int_0^{+\infty} {\rm e}^{-s t}S_*(t,z) {\rm d} t.
\end{equation*}
Since $0\le S_*\le1$, the integral is finite and
$0\le R_s\le1/\lambda$. Multiplying
\eqref{eq:step-pde} by ${\rm e}^{-\lambda t}$ and integrating in $t$ gives
\begin{equation*}
 -R_\lambda''
 +\bigl(s+\kappa\mathbf1_{(-\infty,0]}\bigr)R_s=1,
\end{equation*}
which implies that
$$
 -R_s''+(s+\kappa)R_\lambda=1,~~z\leq0;~~{\rm and}~~-R_s''+s R_s=1,~~z>0.
$$
Note that $R_s''\in L^\infty(\mathbb{R})$ in the
distributional sense. Hence
$R_s\in W^{2,\infty}_{\mathrm{loc}}(\mathbb{R})$ and has a $C^1$
regularity. Then the positive bounded solutions have the form
\begin{equation}\label{laplace-wise}
 R_s(z)=
 \begin{cases}
 \displaystyle
 \frac{1}{s+\kappa}
 +B_s {\rm e}^{\sqrt{s+\kappa}\,z},&z\le0,\\[2mm]
 \displaystyle
 \frac{1}{s}
 +A_s {\rm e}^{-\sqrt s\,z},&z\ge0.
 \end{cases}
\end{equation}
Both $R_s$ and $R_s'$ are continuous at $z=0$.
Direct computation yields
\begin{equation*}
 B_s
 =\frac{1}{\sqrt s\sqrt{s+\kappa}}-\frac{1}{s+\kappa}~~{\rm and}~~
 A_s=\frac{1}{\sqrt s\sqrt{s+\kappa}}-\frac{1}{s}.
\end{equation*}
Derivative matching gives
$
 \sqrt{s+\kappa}B_\lambda=-\sqrt s A_\lambda.
$
For $z\le0$, the exponential coefficient in
\eqref{laplace-wise} is nonnegative. Thus,
\begin{align}\label{left-bound}
 s R_s(z)
 &\le s R_s(0)
 =\frac{\sqrt s}{\sqrt{s+\kappa}}
 \le\frac{\sqrt s}{\sqrt\kappa}.
\end{align}
For $z\ge0$,
\begin{align}\label{right-exact}
 s R_s(z)
 &=1-{\rm e}^{-\sqrt s z}
 +\frac{\sqrt s}{\sqrt{s+\kappa}}
 {\rm e}^{-\sqrt s z}.
\end{align}
Using
$
 1-{\rm e}^{-\sqrt s z}\le \sqrt s z
$ for $z\geq 0$ and
 ${\rm e}^{-\sqrt s z}\le1,
$
which with \eqref{right-exact} leads to
\begin{align}\label{right-bound}
 s R_s(z)
 &\le\sqrt s\,z+\frac{\sqrt s}{\sqrt\kappa}
 \le C_\kappa\sqrt s(1+z).
\end{align}
Combining \eqref{left-bound}-\eqref{right-bound} and the definition of $a(z)$ defined by \eqref{eq:weight}, we obtain
\begin{equation}\label{eqnified}
 s R_s(z)
 \le C_\kappa\sqrt s\,(1+z_+),~~ z\in\mathbb{R}.
\end{equation}
Now fix $t>0$ and choose $s=1/t$. Since $S_*(\tau,z)$ decreases in
$\tau$,
\begin{align}\label{eq:laplace-to-time}
 R_{1/t}(z)
 =\int_0^{+\infty} {\rm e}^{-\tau/t}S_*(\tau,z){\rm d} \tau
 \ge S_*(t,z)\int_0^t {\rm e}^{-\tau/t}{\rm d} \tau=t(1-{\rm e}^{-1})S_*(t,z).
\end{align}
Hence, by \eqref{eqnified}, we have
\begin{align*}
 S_*(t,z)
 \le \frac{1}{1-{\rm e}^{-1}}\frac{1}{t}R_{1/t}(z)
 \le C_\kappa\frac{1+z_+}{\sqrt t}.
\end{align*}
Together with $S_*\le1$, we obtain
$
 S_*(t,z)
 \le C_\kappa
 \min\left\{1,\frac{1+z_+}{\sqrt t}\right\},
$
which with $a(z)=1+(z-z_1)_+$ and \eqref{eq:compar} prove \eqref{S-estimate}.
\end{proof}

\begin{lemma}\label{lem-kernel2}
Under (H1), There exists $C>0$ such that
\begin{equation*}
 K_m(t,z,y)
 \le Ct^{-1/2}
 \min\left\{1,\frac{a(z)}{\sqrt t}\right\}
 \min\left\{1,\frac{a(y)}{\sqrt t}\right\},
 \qquad t>0,
\end{equation*}
for all $z,y\in\mathbb{R}$.
\end{lemma}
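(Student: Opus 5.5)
The plan is to combine the semigroup property and symmetry \eqref{eq:symmetry} with the Gaussian upper bound \eqref{eq:f-bound} and the mass decay of Lemma~\ref{lem:mass-decay}. Split the time in half and write
\[
K_m(t,z,y)=\int_{\mathbb{R}}K_m(t/2,z,\xi)\,K_m(t/2,\xi,y)\,{\rm d}\xi .
\]
In this splitting, one factor is bounded in sup norm and the other in $L^1$.

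\textbf{Step 1: a single-sided bound.} By \eqref{eq:f-bound} we have $K_m(t/2,\xi,y)\le G(t/2,\xi-y)\le C t^{-1/2}$. Symmetry gives $\int K_m(t/2,z,\xi)\,{\rm d}\xi=S_m(t/2,z)$, so
\[
K_m(t,z,y)\le C t^{-1/2} S_m(t/2,z)\le C t^{-1/2}\min\{1,a(z)/\sqrt t\}.
\]
The last inequality uses \eqref{S-estimate}. By symmetry $K_m(t,z,y)=K_m(t,y,z)$, the same bound holds with $a(y)$ in place of $a(z)$.

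\textbf{Step 2: the two-sided bound.} To get the product of both minima, split into three factors at times $t/3$:
\[
K_m(t,z,y)=\iint K_m(t/3,z,\xi)\,K_m(t/3,\xi,\zeta)\,K_m(t/3,\zeta,y)\,{\rm d}\xi\,{\rm d}\zeta .
\]
Bound the middle kernel by $\sup K_m(t/3,\cdot,\cdot)\le Ct^{-1/2}$. The remaining integrals factor as $S_m(t/3,z)\,S_m(t/3,y)$, using symmetry for the $y$-integral. Applying \eqref{S-estimate} to each factor, and absorbing the constant from $t/3$ into $C$, gives
\[
K_m(t,z,y)\le C t^{-1/2}\min\Big\{1,\frac{a(z)}{\sqrt t}\Big\}\min\Big\{1,\frac{a(y)}{\sqrt t}\Big\},
\]
which is the claim for all $t>0$. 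The two-factor splitting of Step 1 alone would only give $\sup_\xi K_m(t/2,\xi,y)$ on one side. The three-factor splitting is what lets both endpoints see the mass decay.

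\textbf{Main obstacle.} I expect the delicate point to be justifying the manipulations at the weak fundamental-solution level. This means the Chapman--Kolmogorov identity applied twice, Tonelli's theorem for nonnegative kernels, and the identification $\int K_m(s,\xi,y)\,{\rm d}\xi=S_m(s,y)$ via symmetry. All of these follow from \eqref{eq:symmetry} and nonnegativity, so I would state them briefly. A further check is that the constant in \eqref{S-estimate} is uniform in $t$, which Lemma~\ref{lem:mass-decay} provides.
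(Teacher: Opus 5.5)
Your proposal is correct, and your Step 2 is exactly the paper's argument: the Chapman--Kolmogorov identity applied at three times $t/3$, the Gaussian bound $K_m(t/3,\xi,\zeta)\le\sqrt{3/(4\pi)}\,t^{-1/2}$ on the middle factor, the factorization into $S_m(t/3,z)\,S_m(t/3,y)$ via symmetry, and Lemma~\ref{lem:mass-decay} with the factor $\sqrt3$ absorbed into $C$. Your Step 1 is also correct, but it is not needed for the final estimate.
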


\begin{proof}
It follows from \eqref{eq:symmetry} that
\begin{align}\label{-identity}
 K_m(t,z,y)
 =\int_{\mathbb{R}}\int_{\mathbb{R}}
 K_m(t/3,z,\xi)
 K_m(t/3,\xi,\zeta)
 K_m(t/3,\zeta,y){\rm d}\xi{\rm d}\zeta.
\end{align}
Using the fact that \eqref{eq:f-bound}, we have pointwise estimate
\begin{equation*}
 K_m(t/3,\xi,\zeta)
 \le G(t/3,\xi-\zeta)
 \le\frac{1}{\sqrt{4\pi(t/3)}}
 =\sqrt{\frac{3}{4\pi}}\,t^{-\frac12}.
\end{equation*}
Substituting this into \eqref{-identity}, and using the
symmetry of $K_m$ and \eqref{eq:Km-Sm} gives
\begin{align*}
 K_m(t,z,y)
 &\le\sqrt{\frac{3}{4\pi}}\,t^{-\frac 12}
 \left(\int_{\mathbb{R}}K_m(t/3,z,\xi){\rm d}\xi\right)
 \left(\int_{\mathbb{R}}K_m(t/3,\zeta,y){\rm d}\zeta\right)\notag\\
 &=\sqrt{\frac{3}{4\pi}}\,t^{-\frac 12}
 S_m(t/3,z)S_m(t/3,y).
\end{align*}
By Lemma~\ref{lem:mass-decay}, we obtain
$
 S_m(t/3,z)
 \le C_1\min\left\{1,\frac{a(z)}{\sqrt{t/3}}\right\}
 \le C_2\min\left\{1,\frac{a(z)}{\sqrt t}\right\}
$
for some constants $C_i>0(i=1,2)$. The same estimate holds for $y$.
Thus, we complete the proof of Lemma \ref{lem-kernel2}.
\end{proof}

In what follows, we also need to prove the decay rate to the left half-line.
\begin{lemma}\label{lem:left-transfer}
Under (H1),
let $f\in{\rm BUC}(\mathbb{R})$ be nonnegative and let $h_f$ be as in
Lemma~\ref{lem:linear-up}. Suppose that $h_f(t,z_1)\le Bt^{-\beta}$ for $t\ge1$ with some $\beta>0$ and
$B\ge0$, then
\begin{equation}\label{left-decay}
 \sup_{z\le z_1}h_f(t,z)
 \le C_{\beta,\eta}\left(\|f\|_{L^\infty(\mathbb{R})}+B\right)t^{-\beta},\quad t\ge1.
\end{equation}
\end{lemma}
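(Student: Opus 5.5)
We argue by comparison on the left half-line $(-\infty,z_1]\times[1,\infty)$, viewing $h_f$ through $F:={\rm e}^{\lambda z}h_f=\int_{\mathbb{R}}K_m(t,z,y){\rm e}^{\lambda y}f(y)\,{\rm d}y$, which solves $F_t=F_{zz}-V_m(z)F$ with $V_m\ge\kappa=\lambda^2+\eta$ on $(-\infty,z_1]$ by \eqref{eq:left}. Equivalently, on that half-line $h_f$ solves \eqref{eq:h-pde}, whose zeroth-order coefficient satisfies $\lambda^2-V_m\le-\eta<0$. Thus the left region is strictly dissipative for $h_f$. The only inputs are the boundary datum $h_f(t,z_1)\le Bt^{-\beta}$ and the bound on $h_f(1,\cdot)$. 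By \eqref{uni-bound}, the latter gives $0\le h_f(1,z)\le {\rm e}^{\lambda^2}\|f\|_{L^\infty(\mathbb{R})}$.

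\textbf{Supersolution.} On $Q=(-\infty,z_1]\times[1,\infty)$ we compare $h_f$ with
$$
\overline h(t,z)=A_1\,{\rm e}^{-\eta(t-1)/2}+A_2\,t^{-\beta},
$$
where the constants are chosen below. For a function $g=g(t)$ alone, the operator in \eqref{eq:h-pde} gives
$$
\mathcal P g:=g_t-g_{zz}-2\lambda g_z-\bigl(r-(1+m)\phi\bigr)g=g_t+(V_m-\lambda^2)g\ge g_t+\eta g
$$
whenever $g\ge0$ and $z\le z_1$. For the first term this yields $\mathcal P\ge A_1(\eta/2){\rm e}^{-\eta(t-1)/2}\ge0$. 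For the second, $\mathcal P(t^{-\beta})\ge(\eta-\beta t^{-1})t^{-\beta}$, which is nonnegative once $t\ge\beta/\eta$. So we handle the time interval $[1,T_0]$, with $T_0=\max\{1,\beta/\eta\}$, using the crude bound \eqref{uni-bound}, which gives $h_f\le {\rm e}^{\lambda^2T_0}\|f\|_{L^\infty(\mathbb{R})}\le C_{\beta,\eta}\|f\|_{L^\infty(\mathbb{R})}t^{-\beta}$ there. We then start the comparison at $t=T_0$ instead of $t=1$. We take $A_2=B$, so that on the lateral boundary $\overline h(t,z_1)\ge Bt^{-\beta}\ge h_f(t,z_1)$. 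We take $A_1={\rm e}^{\lambda^2T_0}\|f\|_{L^\infty(\mathbb{R})}$, so that $\overline h(T_0,\cdot)\ge h_f(T_0,\cdot)$.

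\textbf{Comparison and conclusion.} We apply the weak comparison principle for \eqref{eq:h-pde} on the unbounded domain $(-\infty,z_1]$. This is justified because both functions are bounded on finite time intervals by \eqref{uni-bound}, so a Phragm\'en–Lindel\"of type maximum principle for bounded solutions applies. Alternatively, one can run the comparison at the level of $F$ with kernel representations, as in Lemma \ref{lem:linear-up}. The result is $h_f\le\overline h$ on $Q$ for $t\ge T_0$. Finally, ${\rm e}^{-\eta(t-1)/2}\le C_{\beta,\eta}t^{-\beta}$ for $t\ge1$, which gives \eqref{left-decay}.

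\textbf{Main obstacle.} The hardest step is the rigorous comparison on the half-line with only $h_f(\cdot,z_1)$ controlled and $h_f$ merely a weak solution. I would handle it by approximating $f$ with $f_N=\chi_Nf$ as in Lemma \ref{lem:linear-up}, for which $h_N$ is a classical bounded solution. The maximum principle for bounded subsolutions of a uniformly parabolic equation with bounded coefficients on $(-\infty,z_1]$ then applies, and we pass to the limit $N\to\infty$.
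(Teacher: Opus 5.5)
Your argument is essentially the paper's: a spatially constant supersolution on $(-\infty,z_1]$ built from the dissipativity $\lambda^2-V_m\le-\eta$, which dominates $h_f$ on the lateral boundary through $Bt^{-\beta}$ and initially through \eqref{uni-bound}, followed by comparison on the half-line; the paper avoids your time split by using the single shifted power $F_\beta(t)=T_\beta^\beta(E+B)(t-1+T_\beta)^{-\beta}$, with $E={\rm e}^{\lambda^2}\|f\|_{L^\infty(\mathbb{R})}$ and $T_\beta=\max\{1,\beta/\eta\}$, which is a supersolution for all $t\ge1$. There is one bookkeeping slip that changes only the constant: because you start the comparison at $t=T_0$ with $A_1={\rm e}^{\lambda^2T_0}\|f\|_{L^\infty(\mathbb{R})}$, the exponential term must be ${\rm e}^{-\eta(t-T_0)/2}$ rather than ${\rm e}^{-\eta(t-1)/2}$, since otherwise $\overline h(T_0,\cdot)$ can fall below $h_f(T_0,\cdot)$ when $T_0>1$.
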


\begin{proof}
On $z\le z_1$, \eqref{eq:Km-Sm} and \eqref{eq:left} give
\begin{equation}\label{left-coeff}
 r(z)-(1+m)\phi(z)
 =\lambda^2-V_m(z)
 \le-(\kappa-\lambda^2)=-\eta.
\end{equation}
Set
$
 E={\rm e}^{\lambda^2}\|f\|_{L^\infty(\mathbb{R})}
$ and
$ T_\beta=\max\left\{1,\frac{\beta}{\eta}\right\},$ and define
\begin{equation*}
 F_\beta(t)
 =T_\beta^\beta(E+B)(t-1+T_\beta)^{-\beta},\quad t\ge1,
\end{equation*}
which has
\begin{align*}
 F_\beta'(t)+\eta F_\beta(t)
 &=\left(\eta-
 \frac{\beta}{t-1+T_\beta}\right)F_\beta(t)\ge0.
\end{align*}
Since $F_\beta$ is independent of $z$, by \eqref{eq:h-pde} and \eqref{left-coeff}, we have
\begin{align*}
 &\partial_tF_\beta-\partial_{zz}F_\beta
 -2\lambda\partial_zF_\beta
 -[r(z)-(1+m)\phi(z)]F_\beta \ge F_\beta'(t)+\eta F_\beta(t)\ge0.
\end{align*}
At $t=1$, by \eqref{uni-bound} and $h_f(1,z_1)\le B$ with some $B\geq0$, we obtain
\begin{equation}\label{incom-1}
 h_f(1,z)+h_f(1,z_1)\le {\rm e}^{\lambda^2}\|f\|_{L^\infty(\mathbb{R})}+B=E+B\le F_\beta(1).
\end{equation}
%Next, fix $T>1$ and further define
%$
% \Psi(t,z):={\rm e}^{(t-1)}\bigl(1+(z-z_1)^2\bigr).
%$
%In virtue of \eqref{left-coeff}, we obtain
%\begin{equation*}
% \begin{aligned}
% &\Big(\partial_t-\partial_{zz}-2\lambda\partial_z-[r(z)-(1+m)\phi(z)]\Big)\Psi\\
% \geq&{\rm e}^{2(t-1)}
%       \Big((1+\eta)\bigr(1+(z-z_1)^2\bigr)-1-2\lambda (z-z_1)\Big)\ge0,
% \end{aligned}
%\end{equation*}
%the last inequality holds for $|z-z_1|\gg1$.
%For any $\varepsilon>0$, the function $F_\beta+\varepsilon\Psi$
%is a supersolution on $[1,T]\times[z_1-R,z_1]$ for some $R>0$. It dominates $h_f$
%on the initial and right boundaries. Since $h_f$ is bounded on this
%time interval and $\varepsilon\Psi(t,z_1-R)\ge\varepsilon(1+R^2)$,
%it also dominates on the left boundary when $R$ is sufficiently large.
%Comparison, followed by $R\to\infty$ and $\varepsilon\downarrow0$,
%gives $h_f\le F_\beta$ on $[1,T]\times(-\infty,z_1]$. The time $T$
%is arbitrary.
%
Furthermore,
$
 T_\beta t\ge t-1+T_\beta
$
for $t\ge1$, then
\begin{align*}
 F_\beta(t)
 &\ge(E+B)t^{-\beta}
 \ge Bt^{-\beta}
 \ge h_f(t,z_1).
\end{align*}
Then by comparison principle, the proof is completed.
\end{proof}

\begin{proof}[Proof of Theorem~\ref{thm:stab}]
The global existence and bounds of $u(t,z)$ for \eqref{eq:initi865}
are proved in Lemma~\ref{lemma3.1-Eq}. It remains
to prove the three asymptotic stability assertions.

\medskip
\noindent
\textbf{Proof of \textup{(i)}.}
Since $W_0\in L^\infty(\mathbb{R})$ and set $h=h_{|v_0|}$. By Lemma~\ref{lem:linear-up}, it follows from
\eqref{eq:h-definition} that
\begin{align*}
{\rm e}^{\lambda z}h(t,z)
 &=\int_{\mathbb{R}}K_m(t,z,y)|W_0(y)|{\rm d} y\\
 &\le\|W_0\|_{L^\infty(\mathbb{R})}
 \int_{\mathbb{R}}K_m(t,z,y){\rm d} y
 =\|W_0\|_{L^\infty(\mathbb{R})} S_m(t,z).
\end{align*}
Further by Lemma~\ref{lem:mass-decay}, we have
\begin{equation}\label{eq:i-weighted}
 {\rm e}^{\lambda z}h(t,z)
 \le C\|W_0\|_{L^\infty(\mathbb{R})}
 \min\left\{1,\frac{a(z)}{\sqrt t}\right\}.
\end{equation}
Let $x=z-z_1\ge0$, note that
\begin{equation}\label{ine-bound}
 {\rm e}^{-\lambda z}a(z)
% =e^{-\lambda z_1}(1+x)e^{-\lambda x}
 \le {\rm e}^{-\lambda z_1}
 \left(1+\sup_{x\ge0}x{\rm e}^{-\lambda x}\right)
 ={\rm e}^{-\lambda z_1}\left(1+\frac{1}{{\rm e}\lambda}\right)
<+\infty.
\end{equation}
For $z\ge z_1$, using the fact that
$
 \min\left\{1,\frac{a(z)}{\sqrt t}\right\}
 \le\frac{a(z)}{\sqrt t}
$ and \eqref{eq:i-weighted}, we obtain
\begin{equation}\label{eq:i-right}
 \sup_{z\ge z_1}h(t,z)\le C\|W_0\|_{L^\infty(\mathbb{R})} t^{-\frac12}{\rm e}^{-\lambda z}a(z)
 \le C\|W_0\|_{L^\infty(\mathbb{R})} t^{-\frac12},~~ t\ge1.
\end{equation}
In particular,
$
 h(t,z_1)\le C\|W_0\|_{L^\infty(\mathbb{R})} t^{-1/2}.
$
Applying Lemma~\ref{lem:left-transfer} with
$f=|v_0|$, $\beta=1/2$ and $B=C\|W_0\|_{L^\infty(\mathbb{R})}$ gives
\begin{equation}\label{eq:i-left}
 \sup_{z\le z_1}h(t,z)
 \le C\bigl(||v_0||_{L^\infty(\mathbb{R})}+\|W_0\|_{L^\infty(\mathbb{R})}\bigr)t^{-\frac12},
 \,\,\, t\ge1.
\end{equation}
Since $|v|\le h$, \eqref{eq:i-right} and \eqref{eq:i-left} yield
$$
 \|v(t,\cdot)\|_{L^\infty([z_1,+\infty))}
 \le C\bigl(||v_0||_{L^\infty(\mathbb{R})}+\|W_0\|_{L^\infty(\mathbb{R})}\bigr)t^{-\frac12},\,\,\, t\ge1.
$$
For $0\le t\le1$, use \eqref{uni-bound} and enlarge the constant.
This proves the statement (i).

\medskip
\noindent
\textbf{Proof of \textup{(ii)}.}
By the definition of $M_\sigma$ in \eqref{eq:weight},
assume $0\le\sigma\le1$ and $M_\sigma<+\infty$, and let $h=h_{|v_0|}$. Since $a(z)\ge1$, this
implies $W_0\in L^1(\mathbb{R})$. By Lemmas~\ref{lem:linear-up} and \ref{lem-kernel2},
\begin{align}\label{eq:-bound3}
 {\rm e}^{\lambda z}h(t,z)
 &\le Ct^{-\frac12}
 \min\left\{1,\frac{a(z)}{\sqrt t}\right\}
 \int_{\mathbb{R}}
 \min\left\{1,\frac{a(y)}{\sqrt t}\right\}|W_0(y)| {\rm d} y.
\end{align}
Note that
\begin{align}\label{eq:-factor1}
 \frac{1}{a(z)}
 \min\left\{1,\frac{a(z)}{\sqrt t}\right\}
 &=\min\left\{\frac{1}{a(z)},\frac{1}{\sqrt t}\right\}
 \le t^{-\frac{1}{2}},~~t\geq1.
\end{align}
Further, for every $s>0$ and every $0\le\sigma\le1$, one has
$
 \min\{1,s\}\le s^\sigma.
$
Indeed, if $0<s\le1$, then $s\le s^\sigma$, whereas if $s\ge1$,
then $1\le s^\sigma$. Taking $s=a(y)/\sqrt t$ gives
\begin{equation}\label{eq:data-factor}
 \min\left\{1,\frac{a(y)}{\sqrt t}\right\}
 \le t^{-\frac{\sigma}{2}}\big(a(y)\big)^\sigma,~~t\geq1.
\end{equation}
Therefore
\begin{align}\label{eq:-integral3}
 \int_{\mathbb{R}}
 \min\left\{1,\frac{a(y)}{\sqrt t}\right\}|W_0(y)| {\rm d} y
 \le t^{-\frac{\sigma}{2}}
 \int_{\mathbb{R}}\big(a(y)\big)^\sigma|W_0(y)| {\rm d} y
 =M_\sigma t^{-\frac{\sigma}{2}},~~t\geq1.
\end{align}
Divide \eqref{eq:-bound3} by $a(z)$ and combine
\eqref{eq:-factor1} with \eqref{eq:-integral3}. This gives
\begin{equation}\label{eq:moment-weighted-sup}
 \sup_{z\in\mathbb{R}}\frac{{\rm e}^{\lambda z}h(t,z)}{a(z)}
 \le CM_\sigma t^{-1-\frac\sigma 2},~~ t\ge1.
\end{equation}
For $z\ge z_1$, multiply \eqref{eq:-bound3} by ${\rm e}^{-\lambda z}a(z)$ and use
\eqref{ine-bound}; thus
\begin{equation}\label{eq:ii-right}
 \sup_{z\ge z_1}h(t,z)
 +\sup_{z\ge z_1}\frac{{\rm e}^{\lambda z}h(t,z)}{a(z)}
 \le CM_\sigma t^{-1-\frac\sigma 2},~~t\geq1.
\end{equation}
In particular, because $a(z_1)=1$,
$
 h(t,z_1)\le CM_\sigma t^{-1-\sigma/2}.
$
Thus, by applying Lemma~\ref{lem:left-transfer} with
$ f=|v_0|,~\beta=1+\frac\sigma2$ and $B=CM_\sigma$.
We obtain
\begin{equation}\label{eq:ii-left}
 \sup_{z\le z_1}h(t,z)
 \le C(||v_0||_{L^\infty(\mathbb{R})}+M_\sigma)t^{-1-\frac\sigma 2},~~t\geq1.
\end{equation}
Finally, since $|v|\le h$, we obtain
\begin{align*}
 \|v(t,\cdot)\|_{L^\infty(\mathbb{R})}+\|v(t,\cdot)\|_X
 &\le
 2\sup_{z\le z_1}h(t,z)
 +\sup_{z\ge z_1}h(t,z)
 +\sup_{z\ge z_1}\frac{{\rm e}^{\lambda z}h(t,z)}{a(z)}\\
 &\le C(||v_0||_{L^\infty(\mathbb{R})}+M_\sigma)t^{-1-\frac\sigma 2},~~t\geq1.
\end{align*}
This proves the statement (ii).

\medskip
\noindent
\textbf{Proof of \textup{(iii)}.}
Assume $\lim\limits_{z\to+\infty}\frac{u_0(z)}{\phi(z)}=1$. Since $v_0=u_0-\phi$ and $\phi>0$, then
$$
 \frac{|v_0(z)|}{\phi(z)}\to0,~~{\rm as}~z\to+\infty.
$$
Fix $\varepsilon>0$, there exists $R_\varepsilon>z_1$ such that
$|v_0(z)|\le\varepsilon\phi(z)$ for $z\ge R_\varepsilon$.
Define
\begin{equation}\label{eq:remainder-definition}
 g_\varepsilon(z)
 =\bigl(|v_0(z)|-\varepsilon\phi(z)\bigr)_+,
\end{equation}
which implies that
$
 0\le g_\varepsilon\le ||v_0||_{L^\infty(\mathbb{R})}$~
and $g_\varepsilon(z)=0$ for $z\ge R_\varepsilon$, and
\begin{equation}\label{eq:remainder-decomposition}
 |v_0(z)|\le\varepsilon\phi(z)+g_\varepsilon(z).
\end{equation}
Moreover,
\begin{align}\label{eq:uperest-1}
 \int_{\mathbb{R}}a(y){\rm e}^{\lambda y}g_\varepsilon(y){\rm d} y
 \le ||v_0||_{L^\infty(\mathbb{R})}
 \left[
 \int_{-\infty}^{z_1}{\rm e}^{\lambda y}{\rm d} y
 +\int_{z_1}^{R_\varepsilon}
 (1+y-z_1){\rm e}^{\lambda y} {\rm d} y
 \right]<+\infty.
\end{align}
Let $h_\varepsilon=h_{g_\varepsilon}$ be the solution defined by
\eqref{eq:h-definition}. Define
$$
 \overline v_\varepsilon(t,z)
 =\varepsilon\phi(z)+h_\varepsilon(t,z).
$$
Using the profile equation \eqref{1.3} and \eqref{eq:h-pde}, we compute
\begin{align*}
 &\partial_t\overline v_\varepsilon
 -\partial_{zz}\overline v_\varepsilon
 -2\lambda\partial_z\overline v_\varepsilon
 -(r(z)-u-\phi)\overline v_\varepsilon\\
 &=\varepsilon u(t,z)\phi(z)
 +\bigl(u(t,z)-m\phi(z)\bigr)h_\varepsilon(t,z)
 \ge0.
\end{align*}
By \eqref{eq:remainder-decomposition} gives
$\overline v_\varepsilon(0,z)\ge\pm v_0(z)$, comparison with $v$ and
$-v$ yields
\begin{equation}\label{eq:tail-comparison}
 |v(t,z)|\le\varepsilon\phi(z)+h_\varepsilon(t,z).
\end{equation}

It remains to show $\|h_\varepsilon(t,\cdot)\|_{L^\infty(\mathbb{R})}\to0$. Thanks to \eqref{eq:uperest-1}, by Lemmas~\ref{lem:linear-up} and
\ref{lem-kernel2}, which applied with $\sigma=1$ to the initial function
$g_\varepsilon$, this leads to
$$
 \sup\limits_{z\in\mathbb{R}}
 \frac{{\rm e}^{\lambda z}h_\varepsilon(t,z)}{a(z)}
 \le CN_\varepsilon t^{-\frac32},~~t\geq1.
$$
Hence, by \eqref{eq:i-weighted} and \eqref{eq:ii-right}, then
$
 \sup_{z\ge z_1}h_\varepsilon(t,z)
 \le CN_\varepsilon t^{-3/2}
 $ for $t\geq1$.
Applying Lemma~\ref{lem:left-transfer} with
$
 f=g_\varepsilon,~\beta=3/2$, and $B=CN_\varepsilon$, we obtain
\begin{equation}\label{eq:remainder-left}
 \sup_{z\le z_1}h_\varepsilon(t,z)
 \le C\bigl(\|g_\varepsilon\|_{L^\infty(\mathbb{R})}+N_\varepsilon\bigr)t^{-\frac32}.
\end{equation}
Thus, we have $\|h_\varepsilon(t,\cdot)\|_{L^\infty(\mathbb{R})}\to0$ as $t\to+\infty$.
Finally, from \eqref{eq:tail-comparison},
$$
 \limsup_{t\to\infty}\|u(t,\cdot)-\phi(\cdot)\|_{L^\infty(\mathbb{R})}
 \le\varepsilon\|\phi(\cdot)\|_{L^\infty(\mathbb{R})}.
$$
by the arbitrariness of $\varepsilon>0$, thus,
$
 \lim\limits_{t\to\infty}\|u(t,\cdot)-\phi(\cdot)\|_{L^\infty(\mathbb{R})}=0.
$
This proves \textup{(iii)} and completes the proof.
\end{proof}

\section*{Appendix A}

{\bf Proof of Lemma \ref{lemma1}.}
It follows from \eqref{qeqlv} that
$
zq(z)=z^{1-p}L(z).
$
Since $1-p<-1$, Karamata's theorem \cite[Theorem 1.5.11]{BGT1987} implies that
$
\int_{z_0}^\infty s q(s)\,{\rm d}s<+\infty
$
for every sufficiently large $z_0$.
We first construct a solution which converges to $1$ at $+\infty$. Choose $z_0>0$ sufficiently large such that
$
\theta_{z_0}:=\int_{z_0}^{+\infty} s q(s)\,{\rm d}s<1.
$
Define
$$
(\mathrm{T} v)(z)
=
1+\int_z^{+\infty} (s-z)q(s)v(s)\,{\rm d}s.
$$
For any $v,w\in C([z_0,+\infty))$,
$$
\begin{aligned}
|(\mathrm{T}v)(z)-(\mathrm{T}w)(z)|
&\leq
\int_z^{+\infty} (s-z)q(s)|v(s)-w(s)|\,{\rm d}s \\
&\leq
\|v-w\|_{L^\infty([z_0,+\infty))}
\int_z^{+\infty} s q(s)\,{\rm d}s
\leq
\theta_{z_0}\|v-w\|_{L^\infty([z_0,+\infty))}.
\end{aligned}
$$
Since $\theta_{z_0}<1$, the operator $\mathrm{T}$ is a contraction. Therefore, by the Banach fixed-point theorem, there exists a unique bounded function $v_-\in C([z_0,+\infty))$ satisfying
$$
v_-(z)
=
1+\int_z^{+\infty} (s-z)q(s)v_-(s)\,{\rm d}s,
$$
which implies that
$
|v_-(z)-1|
\leq
C\int_z^{+\infty} s q(s)\,{\rm d}s
$
for some constant $C>0$.
Similarly,
$$
|v_-'(z)|
=
\Big|-\int_z^{+\infty} q(s)v_-(s)\,{\rm d}s\Big|
\leq
C\int_z^{+\infty} q(s)\,{\rm d}s,
$$
and further
by Karamata's theorem \cite[Theorem 1.5.11]{BGT1987} for $p>2$,
\begin{equation}\label{KA}
\int_z^{+\infty} sq(s)\,{\rm d}s
\sim
\frac{z^{2-p}L(z)}{p-2}~~{\rm and}~~
\int_z^{+\infty} q(s)\,{\rm d}s
\sim
\frac{z^{1-p}L(z)}{p-1},~z\to+\infty,
\end{equation}
we have
$$
v_-(z) = 1 + O\Big(z^{2-p}L(z)\Big),~~
v_-'(z)
=
O\!\left(z^{1-p}L(z)\right).
$$

To construct a second linearly independent solution, by using the standard reduction-of-order method, we have
\begin{equation}\label{EQ:v_+}
v_+(z)
=
v_-(z)\int_{z_0}^z\frac{{\rm d}s}{v_-^2(s)}.
\end{equation}
Since $v_-(z)\to 1$, then $\frac{1}{v_-^2(z)}\to 1$.
Consequently,
$
\frac{1}{z}
\int_{z_0}^z\frac{{\rm d}s}{v_-^2(s)}
\to 1~{\rm as}~z\to+\infty.
$
Hence
$$
v_+(z)
=
v_-(z)\left(z+o(z)\right)
=
z+o(z),~{\rm as}~z\to+\infty.
$$
Differentiating \eqref{EQ:v_+} with respects to $v_+$ gives
\begin{equation}\label{v_+-EQ}
v_+'(z)
=
v_-'(z)\int_{z_0}^z\frac{{\rm d}s}{v_-^2(s)}
+
\frac{1}{v_-(z)}.
\end{equation}
Note that $zv_-'(z)\to0$.
Indeed,
$
|zv_-'(z)|
\leq
Cz\int_z^{+\infty} q(s)\,{\rm d}s,
$
and by \eqref{KA} gives
$$
z\int_z^{+\infty} q(s)\,{\rm d}s
\sim
\frac{z^{2-p}L(z)}{p-1}
\to0,~{\rm as}~z\to+\infty,
$$
because $p>2$, which with \eqref{v_+-EQ} obtains that
$
v_+'(z)=1+o(1).
$
Finally, by direct calculation,
$$
\begin{aligned}
W(v_-,v_+)
&=
-\Big(v_-(z)v_+'(z)-v_-'(z)v_+(z)\Big) \\
&=
-v_-(z)
\left[
v_-'(z)\int_{z_0}^z\frac{{\rm d}s}{v_-^2(s)}
+
\frac{1}{v_-(z)}
\right]
+
v_-'(z)v_-(z)
\int_{z_0}^z\frac{{\rm d}s}{v_-^2(s)}
=-1.
\end{aligned}
$$
Therefore $v_-$ and $v_+$ are linearly independent and form fundamental solutions.
This completes the proof.

\section*{Appendix B}

{\bf Proof of Lemma \ref{lem:LG-RV}.}
The proof is based on a first-order term of the
Liouville--Green reduction and the classical asymptotic-integration argument of Hartman--Wintner type. The main point is that only the first
derivative of $q(z)$ is required.

\textbf{Step 1.}
We first give the estimates on $q(z)$
which will control the the two fundamental solutions $(v_-(z),v_+(z))$.
Thanks to \eqref{q(z)-Eq}, we have
$$
\frac{q'(z)}{q(z)}
=
-\frac{p}{z}+\frac{L'(z)}{L(z)}.
$$
By the assumption of \eqref{L(z)-Eq} satisfying $zL'(z)/L(z)\to 0$ as $z\to+\infty$, it follows that
$$
q'(z)\big(q(z)\big)^{-\frac32}
=
\frac{-p} {z\sqrt{q(z)}}(1+o(1))~~{\rm and}~~z\sqrt{q(z)}=z^{1-\frac p2}\big(L(z)\big)^{\frac 12}\to+\infty,
$$
we obtain
$$
q'(z)\big(q(z)\big)^{-\frac32}\to0 \,\,\,\,{\rm and}\,\,\,\,
(q'(z))^2\big(q(z)\big)^{-\frac52}
=
O\left(
z^{-2+\frac p2}\big(L(z)\big)^{-\frac12}
\right).
$$
The function $L^{-1/2}$ is slowly varying, while
$
-2+\frac p2<-1.
$
Therefore, Karamata's theorem \cite[Theorem 1.5.11]{BGT1987} yields
\begin{equation}\label{EQ-inte1}
\int_{z_0}^{+\infty}
\frac{(q'(z))^2}{q(z)^{5/2}}\, {\rm d}z
<+\infty\,\,\,\,{\rm and}\,\,\,\,
S(z)
\sim
\frac{z^{1-\frac p2}\big(L(z)\big)^{\frac12}}{1-p/2},
\end{equation}
where $S(z)=\int_{z_0}^{z}\sqrt{q(s)}\, {\rm d}s$ is defined in \eqref{phase}.
In particular, $S(z)\to+\infty$ as $z\to+\infty$.
Further, $\pm\sqrt{q(z)}$ are two eigenvalues of \eqref{linear-EQ}.
It is natural to introduce the Liouville phase variable $t=S(z)$, then
\begin{equation}\label{sqrtqz}
\frac{{\rm d}t}{{\rm d}z}=\sqrt{q(z)}>0,
\end{equation}
so that $z\mapsto t$ is strictly increasing. The usual Liouville--Green amplitude is $q^{-1/4}$. Motivated by
the scaling (see, \cite[Part III]{HW1955}), for a solution $v(z)$ of \eqref{linear-EQ} we introduce
the variables
\begin{equation}\label{sqrtuw-EQ}
u(t)
:=
\big(q(z(t))\big)^{\frac14}v(z(t)),
\quad
w(t)
:=
\big(q(z(t))\big)^{-\frac14}v'(z(t)),~~v'(z)=\frac{{\rm d}v}{{\rm d}z}.
\end{equation}
%This choice is made so that the leading part of the resulting
%first-order system is constant and its two eigenvalues are
%$+1$ and $-1$, corresponding respectively to the growing and
%decaying Liouville--Green modes.
A direct calculation gives
\begin{equation}\label{trans-uv}
u_t=a(t)u+w,
\qquad
w_t=u-a(t)w,
\end{equation}
where $a(t):=\frac 14q'(z(t))\big(q(z(t))\big)^{-\frac32}$.
Moreover, by \eqref{EQ-inte1} and \eqref{sqrtqz}, we obtain
$$
\begin{aligned}
\int_{t_0}^{+\infty}a^2(t)\, {\rm d}t
&=
\frac1{16}
\int_{z_0}^{+\infty}
\frac{(q'(z))^2}{\big(q(z)\big)^{5/2}}\, {\rm d}z
<+\infty.
\end{aligned}
$$
Hence $a(\cdot)\in L^2([t_0,+\infty))$.

{\bf Step 2.}
We shall construct the growing mode. Set
\begin{equation}\label{TRAN-uwxi}
\xi
=
\frac{u+w}{\sqrt2}
\qquad{\rm and}\qquad
\eta
=
\frac{u-w}{\sqrt2}.
\end{equation}
Then \eqref{trans-uv} becomes
\begin{equation}\label{xieta}
\xi_t=\xi+a(t)\eta\,\,~~{\rm and}\,\,\,\,
\eta_t=-\eta+a(t)\xi.
\end{equation}
Note that the variables $\xi$ and $\eta$ correspond to
the growing and decaying modes ${\rm e}^t$ and ${\rm e}^{-t}$, respectively,
we introduce the ratio
\begin{equation}\label{ratio}
\gamma(t):=\frac{\eta(t)}{\xi(t)},
\end{equation}
which with \eqref{xieta}, we obtain
\begin{equation}\label{xieta2}
\gamma'=-2\gamma+a(t)(1-\gamma^2).
\end{equation}
Choose $t_0>0$ sufficiently large and consider the solution of
\eqref{xieta2} satisfying $\gamma(t_0)=0$.
By variation of constants,
$$
|\gamma(t)|
=
\left|\int_{t_0}^t
{\rm e}^{-2(t-s)}
a(s)\big(1-\gamma^2(s)\big)\, {\rm d}s\right|\le
\int_{t_0}^t {\rm e}^{-2(t-s)}|a(s)|\, {\rm d}s.
$$
Since $a(t)\to0$ as $t\to+\infty$, it follows that
$
\gamma(t)\to0
\,\,\text{as }t\to+\infty.
$
This together with $a(\cdot)\in L^2([t_0,+\infty))$ and by
Young's inequality, yields $\gamma(\cdot)\in L^2([t_0,+\infty))$.
Thus, by the Cauchy--Schwarz inequality,
$
a(\cdot)\gamma(\cdot)\in L^1([t_0,+\infty)).
$

By \eqref{ratio}, the first equation of the system \eqref{xieta} becomes
$
\xi_t=(1+a(t)\gamma(t))\xi.
$
Hence
$$
\xi(t)
=
\xi(t_0)
\exp\left(
\big(t-t_0\big)+\int_{t_0}^t a(s)\gamma(s)\, {\rm d}s
\right).
$$
By some suitable normalization, we obtain
$
\xi(t)=\sqrt2\, {\rm e}^t(1+o(1))$ and
$\eta(t)=o({\rm e}^t)$,
which by \eqref{sqrtuw-EQ} and \eqref{TRAN-uwxi}, it follows that
\begin{equation}\label{EQ_v_+}
v_+(z)
=
\big(q(z)\big)^{-\frac14}e^{S(z)}(1+o(1))\,\,\,{\rm and}\,\,\,
\frac{v_+'(z)}{v_+(z)}
=
\sqrt{q(z)}(1+o(1)).
\end{equation}

The second solution can be constructed by
the standard reduction-of-order formula. Thus, we define
$$
\widetilde v_-(z)
:=
v_+(z)
\int_z^\infty\frac{{\rm d}s}{v^2_+(s)}.
$$
The integral is finite for sufficiently large $z$ by
\eqref{EQ_v_+}, and $\widetilde v_-$ is linearly independent of
$v_+(z)$. Further, note that
$$
\widetilde v_-'(z)
=
v_+'(z)
\int_z^\infty\frac{ds}{v^2_+(s)}
-\frac1{v_+(z)}.
$$
By the similar argument as in Lemma \ref{lemma1}, it is easy to obtain that
$$
v_-(z):=2\widetilde v_-(z)=
\big(q(z)\big)^{-\frac14}{\rm e}^{-S(z)}(1+o(1))\,\,\,{\rm and}\,\,\,\frac{v_-'(z)}{v_-(z)}
=
-\sqrt{q(z)}(1+o(1)),~{\rm as}~z\to+\infty,
$$
where $S(z)=\int_{z_0}^{z}\sqrt{q(s)}\, {\rm d}s$.
Finally, since $v_-=2\widetilde v_-$, by direct calculation,
$
v_+(z)v_-'(z)-v_+'(z)v_-(z)
\equiv-2.
$
The proof is completed.

\section*{Appendix C}

{\bf Proof of Lemma \ref{lem:3.1}.}
The main point is that only the first
derivative of $L(z)$ is required.
We first give the classical logarithmic transformation, which converts \eqref{eq:p2-linear} into a standard second-order equation on the logarithmic scale.
Set $$t=\ln z,~~~U(t)={\rm e}^{-\frac{1}{2}t}v({\rm e}^t),$$
then \eqref{eq:p2-linear} is equivalent to
\begin{equation}\label{eq:p2-U}
    U''(t)-\rho^2(t)U(t)=0,
\end{equation}
where
$\rho(t):=\frac{\sqrt{1+4L({\rm e}^t)}}{2}.
$
In particular, $\rho(t)\ge\frac12$.
We divide the proof into several steps.

\textbf{Step 1.}
We observe that \eqref{eq:p2-U} is equivalent to equation\eqref{eq:p2-linear}. Thus, it suffices to analyze \eqref{eq:p2-U}.
We first provide the relevant error estimates.
Since $\rho(t)=\frac{\sqrt{1+4L({\rm e}^t)}}{2}$,
by $z={\rm e}^t$ gives
\begin{equation*}
   \left|\frac{\rho'(t)}{\rho(t)}\right| = \left|\frac{zL'(z)}
   {2\left(1/4+L(z)\right)}\right|  \le \frac12\left|\frac{zL'(z)}{L(z)}\right|.
\end{equation*}
Therefore, $\rho'(t)/\rho(t)\to 0$ as $t\to+\infty$.
We next verify the error condition.
Since ${\rm d}t={\rm d}z/z$, we have
\begin{equation}\label{estimate-EQ}
    \int_{{\rm e}^{t_0}}^{+\infty}\frac{(\rho'(t))^2}{\rho^3(t)}\, {\rm d}t
    = \frac14 \int_{z_0}^{+\infty}\frac{ z(L'(z))^2 }{ \left(1/4+L(z)\right)^{5/2}}\, {\rm d}z.
\end{equation}

Let
$
\varepsilon(z):=\frac{zL'(z)}{L(z)}.
$
Then $\varepsilon(z)\to0$ as $z\to+\infty$, and hence $\varepsilon$ is bounded on $z\ge z_0$. We shall
divide two cases:

Case (i): suppose that $L(z)$ is non-increasing. Then
$L'(z)\le0$ and, by monotonicity,
$ L(z)\to\ell$ for some $\ell\ge0$.
Since
$$
z\big(L'(z)\big)^2=|\varepsilon(z)|\,L(z)|L'(z)|,~~{\rm and}~~\Big(\frac14+L(z)\Big)^{-\frac 52}\le C,
$$
for some constant $C>0$,
then there exists sufficiently large $z_0>0$ such that

\begin{align}\label{eq-y}
    \int_{z_0}^{+\infty}
    \frac{z(L')^2}
    {(\frac14+L)^{5/2}}\, {\rm d}z
    &\le C\sup_{z\ge z_0}|\varepsilon(z)|\int_{z_0}^\infty L(z)|L'(z)|\, {\rm d}z\notag\\
    &=
    \frac{C}{2}\sup_{z\ge z_0}|\varepsilon(z)|\bigl(L^2(z_0)-\ell^2\bigr)<+\infty.
    \end{align}

{Case (ii):}
suppose that $L(z)$ is nondecreasing. If $L$ is bounded,
then $L(z)\to\ell\in(0,\infty)$, and the same argument gives
\eqref{eq-y}. If $L(z)\to+\infty$, then, increasing $z_0$ if necessary, $L(z)\ge1$ for $z\ge z_0$, and
\begin{align*}
    \frac{z(L')^2}
    {(\frac14+L)^{5/2}}=
    \varepsilon(z)
    \frac{L(z)L'(z)}
    {(\frac14+L(z))^{5/2}}\le
    C|\varepsilon(z)|
    \frac{L'(z)}{\big(L(z)\big)^{3/2}}.
\end{align*}
Consequently,
\begin{align*}
    \int_{z_0}^{+\infty}
    \frac{z(L')^2}{(\frac14+L)^{5/2}}\, {\rm d}z
    &\le C\sup_{z\ge z_0}|\varepsilon(z)|  \int_{z_0}^{+\infty}
    \frac{L'(z)}{\big(L(z)\big)^{3/2}}\, {\rm d}z
    = \frac{ 2C\sup_{z\ge z_0}|\varepsilon(z)| }{\sqrt{L(z_0)}}<+\infty.
\end{align*}
Thus \eqref{estimate-EQ} converges in all cases.

\textbf{Step 2.} Set
\begin{equation}\label{define0}
t_0:=\ln z_0,
\quad
\tau(t):=\int_{t_0}^t \rho(s)\, {\rm d}s.
\end{equation}
Note that $\rho(\cdot)\in C^1([t_0,+\infty))$ and
$
\rho(t)\ge\frac12.
$
Hence $\tau(t)$ is strictly increasing and
$
\tau(t)\to+\infty$ as $t\to+\infty$.
Denote the inverse function by $t=t(\tau)$. Define
\[
\kappa(\tau)
:=
\frac{\rho'(t(\tau))}{\rho^2(t(\tau))}.
\]
Since ${\rm d}\tau=a(t)\, {\rm d}t$, it follows from Step 1 that
\begin{equation}\label{int-ine}
\int_0^\infty |\kappa(\tau)|^2\, {\rm d}\tau
=
\int_{t_0}^{+\infty}\frac{(\rho'(t))^2}{\rho^3(t)}\, {\rm d}t<+\infty.
\end{equation}

We now apply the classical Liouville--Green phase--amplitude
normalization to \eqref{eq:p2-U}. Set
$
Q(t):=\rho^2(t),
$
and define
$$
\tau(t)
:=
\int_{t_0}^{t}\sqrt{Q(s)}\,\mathrm ds
=
\int_{t_0}^{t}\rho(s)\,\mathrm ds.
$$
Writing \(t=t(\tau)\), by the same transformation \eqref{sqrtuw-EQ} introduced in Lemma \ref{lem:LG-RV}, we define
\begin{equation}\label{DE:tran-P}
\begin{cases}
\mathcal U(\tau)
:=
\big(Q(t(\tau))\big)^{\frac14}U(t(\tau))
=
\sqrt{\rho(t(\tau))}\,U(t(\tau)),\\
\mathcal W(\tau)
:=
\big(Q(t(\tau))\big)^{-\frac14}U'(t(\tau))
=
\frac{U'(t(\tau))}{\sqrt{\rho(t(\tau))}}.
\end{cases}
\end{equation}
Then let
\begin{equation}\label{EQ:ZZP}
Z_+(\tau)
:=
\frac{\mathcal U(\tau)+\mathcal W(\tau)}{2},
\qquad
Z_-(\tau)
:=
\frac{\mathcal U(\tau)-\mathcal W(\tau)}{2},
\end{equation}
which with \eqref{DE:tran-P} yields that
\begin{equation}\label{EQ:weifen}
\begin{aligned}
\frac{{\rm d}}{{\rm d}\tau} Z_+
&=
\frac12\left(\frac{{\rm d}}{{\rm d}\tau}\mathcal U+\frac{{\rm d}}{{\rm d}\tau}\mathcal W\right)
=
Z_++\kappa(\tau)Z_-,
\\
\frac{{\rm d}}{{\rm d}\tau} Z_-
&=
\frac12\left(\frac{{\rm d}}{{\rm d}\tau}\mathcal U-\frac{{\rm d}}{{\rm d}\tau}\mathcal W\right)
=
\kappa(\tau)Z_+-Z_-,
\end{aligned}
\end{equation}
with $\kappa(\tau):=\frac{\rho'(t(\tau))}{2\rho(t(\tau))^2}.$
Also note that
$$
\frac{\rm dt}{\rm d\tau}
=
\frac{1}{\rho(t(\tau))}
\,\,~~{\rm and}\,\,~~
\frac{\rm d}{\rm d\tau}
\sqrt{\rho(t(\tau))}
=
\kappa(\tau)\sqrt{\rho(t(\tau))},
$$
Thus, setting
$
Z(\tau):=
\left(Z_+(\tau),\,Z_-(\tau)\right)^\top$,
the system \eqref{EQ:weifen} can be written as
\begin{equation}\label{eq:Z-system}
Z'
=
\bigl(J+G(\tau)\bigr)Z,
\end{equation}
where
$$
J:=
\begin{bmatrix}
1&0\\
0&-1
\end{bmatrix},
\qquad
G(\tau)
:=
\kappa(\tau)
\begin{bmatrix}
0&1\\
1&0
\end{bmatrix}.
$$
By \eqref{int-ine}, we have $\kappa(\cdot)\in L^2(0,+\infty)$,  which implies that $|G(\cdot)| \in L^2(0,+\infty)$.
Therefore, the classical
Hartman--Wintner asymptotic integration theorem \cite[Part III]{HW1955}
applies to \eqref{eq:Z-system}, we obtain two
solutions $Z^{(+)}(\tau)$ and $Z^{(-)}(\tau)$, satisfying
\begin{equation}\label{eq:critical-Z-asymptotics}
\begin{aligned}
Z^{(+)}(\tau)
=
{\rm e}^\tau\bigl(1+o(1)\bigr)~~~{\rm and}~~~
Z^{(-)}(\tau)
=
{\rm e}^{-\tau}\bigl(1+o(1)\bigr),
\end{aligned}
\qquad
\tau\to+\infty.
\end{equation}
It follows from that \eqref{DE:tran-P} and \eqref{EQ:ZZP} gives that
$$
U_+(t)
=
\frac{{\rm e}^{\tau(t)}}{\sqrt{\rho(t)}}
\bigl(1+o(1)\bigr),~~~~
U_+'(t)
=
\sqrt{\rho(t)}\,{\rm e}^{\tau(t)}
\bigl(1+o(1)\bigr),
$$
and
$$
U_-(t)
=
\frac{{\rm e}^{-\tau(t)}}{\sqrt{\rho(t)}}
\bigl(1+o(1)\bigr),~~~~
U_-'(t)
=
-\sqrt{\rho(t)}\,{\rm e}^{-\tau(t)}
\bigl(1+o(1)\bigr).
$$
Thus \(U_+\) and \(U_-\) form a fundamental system for
$U''=\rho^2(t)U$ on a sufficiently large interval.
Consequently,
$$
W(U_+,U_-)
:=
U_+U_-'-U_+'U_-
=
-2+o(1)
\qquad\text{as }t\to\infty.
$$
Using the fact that $W'(U_+,U_-)=0$. Therefore
$
W(U_+,U_-)\equiv -2.
$
In particular, \(U_+\) and \(U_-\) are linearly independent and
form fundamental solutions.

%\section*{Acknowledgment}
%Shi-Liang Wu and Zhibao Tang are supported by the NSF of China (No. 12571185),
%the Shaanxi Fundamental Science Research Project for Mathematics and Physics (No. 22JSY029),
%and Natural Science Basic Research Program of Shaanxi (No. 2024RS-CXTD-88).
%Lina Wang is partially supported by .

%%%%%%%%%%%%%%%%%%%%%%%%%%%%%%%%%%%%%%%%%%%%%%%%%%%%%%%%%%%%%%%%%%%%%%%%%%%

\end{document}